
%



\documentclass[12pt,a4paper]{amsart}

\usepackage{amssymb,color}
\usepackage[english]{babel}
\usepackage{verbatim,here}
\usepackage[T1]{fontenc}
\usepackage{epstopdf}
\usepackage{floatflt,graphicx}
\usepackage{color,xcolor}
\usepackage{enumerate}
\usepackage{animate}
\usepackage{a4wide}
\usepackage{amsmath,amssymb}
\usepackage{amsthm}

\usepackage{arydshln}

\graphicspath{{c:/manus09/nasser/180803/figures180803}{c:/manus09/nasser/180808/figures}}

\setcounter{MaxMatrixCols}{10}
\usepackage[autolinebreaks]{mcode}

\newcounter{minutes}
\setcounter{minutes}{\time}
\divide\time by 60
\newcounter{hours}
\setcounter{hours}{\time}
\multiply\time by 60 \addtocounter{minutes}{-\time}
{\small
\curraddr{}
\email{mms.nasser@qu.edu.qa}
\curraddr{}
\email{vuorinen@utu.fi}
}
\keywords{conformal capacity; boundary integral equations; numerical conformal mapping}
\subjclass[2010]{65R20, 65E05, 30C85, 31A15}

\dedicatory{}
\commby{}
\theoremstyle{plain}
\newtheorem{thm}[equation]{Theorem}

\newtheorem{lem}[equation]{Lemma}

\newtheorem{algorithm}[equation]{Algorithm}

\theoremstyle{definition}

\theoremstyle{remark}

\numberwithin{equation}{section}

\newcommand{\beq}{\begin{equation}}
\newcommand{\eeq}{\end{equation}}
\newcommand{\ben}{\begin{enumerate}}
\newcommand{\een}{\end{enumerate}}
\newcommand{\bequu}{\begin{eqnarray*}}
\newcommand{\eequu}{\end{eqnarray*}}
\newcommand{\bequ}{\begin{eqnarray}}
\newcommand{\eequ}{\end{eqnarray}}

\renewcommand{\Im}{{\rm Im}\,}
\renewcommand{\Re}{{\rm Re}\,}

\newcommand\bigzero{\makebox(0,10){\text{\Huge O}}}
\newcommand\bigzeroL{\makebox(0,10){\text{\Large O}}}
\newcommand\bigzeroU{\makebox(0,0){\text{\Large O}}}

\newcommand{\bs}{{\bf s}}
\newcommand{\bt}{{\bf t}}

\DeclareMathOperator{\capp}{Cap}

%

\usepackage{url}
\usepackage{color}

\newcommand{\CC}{{\mathbb C}}

\renewcommand{\i}{\mathrm{i}}

\newcommand{\bI}{{\bf I}}
\newcommand{\bM}{{\bf M}}
\newcommand{\bN}{{\bf N}}
\newcommand{\bn}{{\bf n}}
\newcommand{\bT}{{\bf T}}
\newcommand{\capa}{\mathrm{cap}}



\renewcommand{\thefootnote}{\number_style{footnote}}
\begin{document}

\def\thefootnote{}

\title[Capacity of generalized condensers]{Numerical computation of the capacity of generalized condensers}

\author[M.M.S. Nasser]{Mohamed M S Nasser}
\address{Department of Mathematics, Statistics and Physics, Qatar University, P.O. Box 2713, Doha, Qatar.}
\author[M. Vuorinen]{Matti Vuorinen}
\address{Department of Mathematics and Statistics, University of Turku, Turku, Finland.}

\date{}

\begin{abstract}
We present a boundary integral method for numerical computation of the capacity of generalized condensers. 
The presented method applies to a wide variety of generalized condenser geometry including the cases when the plates of the generalized condenser are bordered by piecewise smooth Jordan curves or are rectilinear slits. The presented method is used also to compute the harmonic measure in multiply connected domains.
\end{abstract}

\maketitle

\footnotetext{\texttt{{\tiny File:~\jobname .tex, printed: \number\year-%
\number\month-\number\day, \thehours.\ifnum\theminutes<10{0}\fi\theminutes}}}
\makeatletter

\makeatother


\section{Introduction}
\label{sec:int}
\setcounter{equation}{0}

The conformal capacity of condensers is an important notion in geometric function theory \cite{ah,avv,dek,du,kuh,ps,Vas02} and in various applications of electronics. However, the analytic forms of the capacity are known only for special types of condensers. So, the use of numerical methods for computing such capacity is unavoidable. Indeed, numerical computing of capacity of condensers have been intensively studied in the literature, see e.g.,~\cite{bbgghv,bsv,hrv1,hrv2,dek}. The capacity of condensers is one of the several ``conformal invariants'' which are powerful tools in complex analysis. Some of the other important examples of conformal invariants are the harmonic measure, the logarithmic capacity, the extremal length, the reduced extremal length, and the hyperbolic distance~\cite{ah,du,du2,gm,Vas02,vuo}. Numerical computing of such invariants has been studied also in the literature, see e.g.,~\cite{bbgghv,LSN17,ps,Ran2,RR}. 

Capacity of generalized condensers is another important example of conformal invariants~\cite{du,de,dk1,dk2,Vas02}. In this paper, we present a numerical method for computing the capacity of generalized condensers. We consider the case in which the plates of the generalized condensers are bordered by piecewise smooth Jordan curves or are rectilinear slits. As far as we know, the proposed method is the first numerical method for computing the numerical values of the capacity of the generalized condensers. The boundary integral equation with the generalized Neumann kernel~\cite{Nas-ETNA,Weg-Nas} plays a key role in developing our method. The presented method can be used also to compute the harmonic measure in multiply connected domains.

Let $B$ be an open subset of $\overline{\CC}=\CC\cup\{\infty\}$. We consider generalized condensers of the form $C=(B,E,\delta)$ where $E=\{E_k\}_{k=1}^m$, $m\ge2$, is a collection of nonempty closed pairwise disjoint sets in $E_k\subset B$ and $\delta=\{\delta_k\}_{k=1}^{m}$ is a collection of real numbers containing at least two different numbers. The set $G=B\backslash\cup_{k=1}^{m}E_k$ is called the field of the condenser $C$, the sets $E_k$ are the plates of the condenser, and the $\delta_k$ are the levels of the potential of the plates $E_k$, $k=1,2,\ldots,m$~\cite[p.~12]{du}. We assume that $G$ is a finitely connected domain without isolated boundary points and that $\partial G\cap\left(\cup_{k=1}^m E_k\right)$ consists of $m$ piecewise smooth Jordan curves, then the conformal capacity of $C$, $\capa(C)$, is given by the Dirichlet integral~\cite[p.~13, p.~305]{du}
\begin{equation}\label{eq:cap}
\capa(C)=\iint\limits_{G}|\nabla u|^2dxdy
\end{equation}
where $u$ is the potential function of the condenser $C$, i.e., $u$ is continuous in $\overline{G}$, harmonic in $G$, and equal to $\delta_k$ on $\partial E_k$ for $k=1,2,\ldots,m$ and satisfies $\partial u/\partial{\bf n}=0$ on $\partial B\backslash\cup_{k=1}^{m}E_k$ where $\partial u/\partial{\bf n}$ denotes the directional derivative of $u$ 
along the outward normal.

The analytical description of the problem is given in Section 2 and it is based on the classical theory of integral equations \cite{Mik64} and on the definition of  the generalized capacity due to Dubinin \cite{du}. In Section 3 we formulate the computational problem as a Riemann-Hilbert problem and prove a preliminary analytical result. The main theoretical results are presented in Section 4 and they deal with unique solvability of algebraic linear systems related to the Riemann-Hilbert problem. Also an outline of an algorithm for the numerical solution of the integral equation is given. In Section 5 we give a MATLAB implementation of the algorithm. This algorithm is tested in Section 6 in the case of capacity computation of condensers with piecewise smooth boundary curves and results are compared, with good agreement of results, to earlier numerical results from \cite{bsv}. 
In Section 7 we apply the algorithm for the computation of the  capacity of generalized condensers. 
In Section 8, we use the presented algorithm with the help of conformal mappings to compute the capacity of rectilinear slit condensers. 
In the final Section 9 we show that the same method also works for the computation of the harmonic measure.


\section{The potential function}
\label{sec:u}

In this paper, for $k=1,2,\ldots,m$, we assume that $E_k=\overline{G_k}$ where $G_k$ is a simply connected domain bordered by a piecewise smooth Jordan curve $\Gamma_k$. We assume also that either $B=\CC$ or $B\subsetneq\CC$ is a multiply connected domain of connectivity $\ell\ge1$ bordered by $\ell$ piecewise smooth Jordan curves $\Gamma_k$ for $k=m+1,m+2,\ldots,m+\ell$. We assume $\ell=0$ when $B=\CC$ and $\infty\in B$ when $B$ is unbounded. Then, the field of the condenser is the multiply connected domain $G$ of connectivity $m+\ell$ bordered by
\[
\Gamma=\partial G=\bigcup_{k=1}^{m+\ell}\Gamma_k,
\]
where the orientation of the curves $\Gamma_k$ is such that $G$ is always on the left of $\Gamma_k$ for $k=1,2,\ldots,m+\ell$. For each $k=m+1,m+2,\ldots,m+\ell$, the simply connected domain on the right of $\Gamma_k$ will be denoted by $G_k$.

The domain $G$ is either bounded or unbounded. If $G$ is unbounded, we assume $\infty\in G$. If $G$ is bounded, then one of the simply connected domains $G_1,\ldots,G_m$ or $G_{m+1},\ldots,G_{m+\ell}$ is unbounded and contains $\infty$. If the unbounded domain is one of the domains $G_1,\ldots,G_m$, then we assume it is the domain $G_m$ and the curve $\Gamma_m$ enclose all the other curves $\Gamma_1,\ldots,\Gamma_{m-1},\Gamma_{m+1},\ldots,\Gamma_{m+\ell}$. Similarly, if the unbounded domain is one of the domains $G_{m+1},\ldots,G_{m+\ell}$, then we assume it is the domain $G_{m+\ell}$ and the curve $\Gamma_{m+\ell}$ enclose all the other curves $\Gamma_1,\ldots,\Gamma_{m+\ell-1}$.
Based on the boundedness of the domains $B$ and $G$, we define the integers $m'$ and $l'$ by
\begin{equation}\label{eq:mp}
m' =
\begin{cases}
m-1, & \mbox{if $G$ is bounded and $B$ is unbounded}, \\
m,   & \mbox{otherwise},
\end{cases}
\end{equation}
and
\begin{equation}\label{eq:ellp}
\ell' =
\begin{cases}
\ell-1, & \mbox{if $G$ is bounded and $B$ is bounded}, \\
\ell,   & \mbox{otherwise}.
\end{cases}
\end{equation}
In particular, if $G$ is unbounded, then $B$ is unbounded (since $G\subseteq B$), $m'=m$, $\ell'=\ell$, and hence $m'+\ell'=m+\ell$. If $G$ is bounded, then either $m'=m-1$ or $\ell'=\ell-1$ and hence $m'+\ell'=m+\ell-1$. Further, $m'=m-1$ means that $\Gamma_{m}$ is the external boundary component of $G$. Similarly, $\ell'=\ell-1$ means that the external boundary component of $G$ is $\Gamma_{m+\ell}$.
With these definitions of $m'$ and $\ell'$, the domains $G_1,\ldots,G_{m'}$ and $G_{m+1},\ldots,G_{m+\ell'}$ are bounded simply connected domains. For each of these bounded domains, we assume that $\alpha_k$ is an auxiliary point in $G_k$ for each $k=1,2,\ldots,m'$ and $\beta_k$ is an auxiliary point in $G_{m+k}$  for each $k=1,2,\ldots,\ell'$.

The potential function $u$ is then a solution of the Laplace equation $\Delta u=0$ with the mixed Dirichlet-Neumann boundary condition
\begin{subequations}\label{eq:mix-bd}
\begin{align}
\label{eq:mix-bd-1}
u(\zeta)&= \delta_k, \quad \zeta\in\Gamma_k, \quad k=1,2,\ldots,m, \\
\label{eq:mix-bd-2}
\frac{\partial u}{\partial\bn}(\zeta)&= 0, \quad \zeta\in\Gamma_k, \quad k=m+1,m+2,\ldots,m+\ell.
\end{align}
\end{subequations}
Note that the boundary value problem~\eqref{eq:mix-bd} reduces to a Dirichlet problem for $\ell=0$. Note also that the problem~\eqref{eq:mix-bd} does not reduce to a Neumann problem since $m\ge2$. The problem~\eqref{eq:mix-bd} has a unique solution $u$~\cite{IS19}.
 
A more general form of such mixed boundary value problem has been considered in~\cite{IS19} using a Cauchy integral method and in~\cite{Nas-bvp,Nas-jam} using the boundary integral equation with the generalized Neumann kernel. Due to the simple forms of the boundary conditions in~\eqref{eq:mix-bd}, the method presented in~\cite{Nas-bvp,Nas-jam} will be further simplified in this paper to obtain a simple, fast, and accurate method for computing the potential function $u$ and the capacity $\capa(C)$ of the generalized condenser $C$.

The harmonic function $u$ is the real part of an analytic function $F$ in $G$. The function $F$ is not necessarily single-valued, but it can be written as~\cite{Gak,gm,Mik64,Mus}
\begin{equation}\label{eq:F-u}
F(z)=g(z)-\sum_{k=1}^{m'} a_k\log(z-\alpha_k)-\sum_{k=1}^{\ell'} b_k\log(z-\beta_k)
\end{equation}
where $g$ is a single-valued analytic function in $G$ and $a_1,\ldots,a_{m'},b_1,\ldots,b_{\ell'}$ are undetermined real constants such that~\cite[\S31]{Mik64}
\begin{equation}\label{eq:ak}
a_k=\frac{1}{2\pi}\int_{\Gamma_k}\frac{\partial u}{\partial{\bf n}}ds, \quad k=1,2,\ldots,m',
\end{equation}
and
\[
b_k=\frac{1}{2\pi}\int_{\Gamma_{m+k}}\frac{\partial u}{\partial{\bf n}}ds, \quad k=1,2,\ldots,\ell'.
\]
Hence, using~\eqref{eq:mix-bd-2}, we have $b_k=0$ for all $k=1,2,\ldots,\ell'$.
Thus, the function $F$ has the representation
\begin{equation}\label{eq:F-u2}
F(z)=g(z)-\sum_{k=1}^{m'} a_k\log(z-\alpha_k).
\end{equation}
Since $u$ is harmonic in the domain $G$, then~\cite{Mik64}
\[
\int_{\Gamma}\frac{\partial u}{\partial{\bf n}}ds=0,
\]
which in view of~\eqref{eq:mix-bd-2} implies that
\begin{equation}\label{eq:u-k}
\sum_{k=1}^{m}\int_{\Gamma_k}\frac{\partial u}{\partial{\bf n}}ds=0.
\end{equation}

Recall that $a_1,\ldots,a_{m'}$ are given in~\eqref{eq:ak}. So, if $m'=m-1$, we define
\begin{equation}\label{eq:am}
a_m=\frac{1}{2\pi}\int_{\Gamma_m}\frac{\partial u}{\partial{\bf n}}ds.
\end{equation}
Hence, it follows from~\eqref{eq:ak}, \eqref{eq:u-k}, and~\eqref{eq:am} that
\begin{equation}\label{eq:sum-ak}
\sum_{k=1}^{m}a_k=\sum_{k=1}^{m}\frac{1}{2\pi}\int_{\Gamma_k}\frac{\partial u}{\partial{\bf n}}ds = 0,
\end{equation}
which implies, in the case $m'=m-1$, that
\begin{equation}\label{eq:am-b}
a_m=-\sum_{k=1}^{m-1}a_k.
\end{equation}

Using Green's formula~\cite[p.~4]{du}, Equation~\eqref{eq:cap} can be written as
\begin{equation}\label{eq:cap-n}
\capa(C)=\int_{\partial G}u\frac{\partial u}{\partial{\bf n}}ds.
\end{equation}
Since $\partial u/\partial\bn=0$ on $\partial B=\cup_{k=1}^{\ell}\Gamma_{m+k}$ and $u=\delta_k$ on $\Gamma_k$ for $k=1,2,\ldots,m$, then in view of~\eqref{eq:ak} and~\eqref{eq:am}, we have
\begin{equation}\label{eq:cap-k}
\capa(C)=\sum_{k=1}^{m}\delta_k\int_{\Gamma_k}\frac{\partial u}{\partial{\bf n}}ds
=2\pi\sum_{k=1}^{m}\delta_ka_k.
\end{equation}
Equation~\eqref{eq:cap-k} gives us a simple formula for computing the capacity of the generalized condenser $C$ in terms of the levels $\delta_k$ of the potential of the plates and the values of the constants $a_k$ for $k=1,2,\ldots,m$. 

In this paper, the boundary integral equation with the generalized Neumann kernel will be used to compute the constants $a_k$ as well as the values of the function $u(z)$ for $z\in G$. However, to use the integral equation, we will first reformulate the above mixed boundary value problem as a Riemann-Hilbert problem as it will be described in the next section. Solving the mixed boundary value problem by reducing it to a Riemann-Hilbert problem is a well known approach and has been used by many researchers in the literature (see e.g.,~\cite{Nas-bvp,Gak,Haas,Mus,Nas-jam}).

\section{The Riemann-Hilbert problem}

For each $k=1,2,\ldots,m+\ell$, the boundary component $\Gamma_k$ is parametrized by a $2\pi$-periodic complex function $\eta_k(t)$, $t\in J_k:=[0,2\pi]$. The total parameter domain $J$ is the disjoint union of the $m+\ell$ intervals $J_1,\ldots,J_{m+\ell}$, 
\[
J = \bigsqcup_{k=1}^{m+\ell} J_k=\bigcup_{k=1}^{m+\ell}\{(t,k)\;:\;t\in J_k\}.
\]
The elements of $J$ are ordered pairs $(t,k)$ where $k$ is an auxiliary index indicating which of the intervals contains the point $t$~\cite{Nas-ETNA}. 
A parametrization of the whole boundary $\Gamma$ is then defined by
\begin{equation}\label{e:eta-1}
\eta(t,k)=\eta_k(t), \quad t\in J_k,\quad k=1,2,\ldots,m+\ell.
\end{equation}
For a given $t$, the value of auxiliary index $k$ such that $t\in J_k$ will be always clear from the context. So we replace the pair $(t,k)$ in the left-hand side of~(\ref{e:eta-1}) by $t$ in the same way as in~\cite{Nas-ETNA}. Thus, the function $\eta$ in~(\ref{e:eta-1}) is written as
\begin{equation}\label{e:eta}
\eta(t)= \left\{ \begin{array}{l@{\hspace{0.5cm}}l}
\eta_1(t),&t\in J_1,\\
\eta_2(t),&t\in J_2,\\
\hspace{0.3cm}\vdots\\
\eta_{m+\ell}(t),&t\in J_{m+\ell}.
\end{array}
\right.
\end{equation}

Since $u=\delta_k$ is known on the boundary components $\Gamma_k$ for $k=1,2,\ldots,m$ and since $u=\Re F$, then the boundary values of the function $F$ satisfy
\begin{equation}\label{eq:F-bc-1}
\Re[F(\eta(t))]=\delta_k,\quad \eta(t)\in\Gamma_k, \quad k=1,2,\ldots,m.
\end{equation}
On the boundaries $\partial B=\cup_{k=1}^{\ell}\Gamma_{m+k}$, the potential function $u$ satisfies the boundary condition $\partial u/\partial\bn=0$ where $\bn$ is the outward normal vector on $\partial B$. Let $\bT$ be the unit tangent vector on $\partial B$. Then, for $\eta(t)\in\partial B$,
\begin{equation}\label{eq:tangent}
\bn(\eta(t))=-\i\bT(\eta(t))=-\i\frac{\eta'(t)}{|\eta'(t)|}=e^{\i\nu(\eta(t))}
\end{equation}
where $\nu(\eta(t))$ is the angle between the positive real axis and the normal vector $\bn(\eta(t))$.
Using the Cauchy-Riemann equations, the derivative of the analytic function $F$ is then $F'(z)=\frac{\partial u(z)}{\partial x}-\i\frac{\partial u(z)}{\partial y}$. Thus,
\begin{equation}\label{eq:0-par-u}
\frac{\partial u}{\partial\bn}=\nabla u\cdot\bn
=\cos(\nu)\frac{\partial u}{\partial x}+\sin(\nu)\frac{\partial u}{\partial y}
=\Re\!\left[e^{\i\nu}\left(\frac{\partial u}{\partial x}-\i\frac{\partial u}{\partial y}\right)\right]\!
=\Re\!\left[\frac{-\i\eta'(t)}{|\eta'(t)|}F'(\eta(t))\right]\!
\end{equation}
which, in view of~\eqref{eq:mix-bd-2}, implies that
\[
\Re\left[-\i\eta'(t)F'(\eta(t))\right]=0, \quad \eta(t)\in\Gamma_{m+k}, \quad k=1,2,\ldots,\ell.
\]
Integrating with respect to the parameter $t$ yields
\begin{equation}\label{eq:F-bc-2}
\Re[-\i F(\eta(t))]=\nu_k,\quad \eta(t)\in \Gamma_{m+k}, \quad k=1,2,\ldots,\ell,
\end{equation}
where $\nu_1,\nu_2,\ldots,\nu_\ell$ are real constants of integration. Thus, by~\eqref{eq:F-bc-1} and~\eqref{eq:F-bc-2}, the boundary values of the function $F$ satisfy the boundary condition
\[
\Re\left[e^{-\i\theta(t)}F(\eta(t))\right]=\delta(t)+\nu(t)
\]
where
\begin{equation}\label{eq:theta}
\theta(t) =
\begin{cases}
0, & t \in J_1, \\
 \vdots & \\
0, & t \in J_m, \\
\pi/2, & t \in J_{m+1}, \\
\vdots & \\
\pi/2, & t \in J_{m+\ell},
\end{cases}, \quad
\delta(t) =
\begin{cases}
\delta_1, & t \in J_1, \\
 \vdots & \\
\delta_m, & t \in J_m, \\
0, & t \in J_{m+1}, \\
\vdots & \\
0, & t \in J_{m+\ell},
\end{cases}, \quad
\nu(t) =
\begin{cases}
0, & t \in J_1, \\
 \vdots & \\
0, & t \in J_m, \\
\nu_1, & t \in J_{m+1}, \\
\vdots & \\
\nu_\ell, & t \in J_{m+\ell},
\end{cases}
\end{equation}
i.e., $\theta(t)=0$ and $\nu(t)=0$ for $\ell=0$.
Then, it follows from~\eqref{eq:F-u2} that the single-valued analytic function $g$ satisfies the boundary condition
\begin{equation}\label{eq:get1}
\Re\left[e^{-\i\theta(t)}g(\eta(t))\right]=\delta(t)+\nu(t)+\sum_{k=1}^{m'} a_k\Re\left[e^{-\i\theta(t)}\log(\eta(t)-\alpha_k)\right].
\end{equation}

\begin{lem}\label{lem:gam-k}
The functions $\gamma_k$, for $k=1,\ldots,m'$, defined on $J$ by
\begin{equation}\label{eq:gam-k}
\gamma_k(t) = \begin{cases}
\displaystyle
\Re\left[e^{-\i\theta(t)}\log(\eta(t)-\alpha_k)\right], 
& {\rm if\;} \ell'=\ell, \\[10pt]
\displaystyle
\Re\left[e^{-\i\theta(t)}\log\frac{\eta(t)-\alpha_k}{\eta(t)-\alpha}\right], 
& {\rm if\;} \ell'=\ell-1,
\end{cases}
\end{equation}
are periodic for $t\in J_j$, $j=1,2,\ldots,m+\ell$.
For both cases, we have
\begin{equation}\label{eq:sum-gam-k}
\sum_{k=1}^{m'} a_k\gamma_k(t)=\sum_{k=1}^{m'} a_k\Re\left[e^{-\i\theta(t)}\log(\eta(t)-\alpha_k)\right].
\end{equation}
\end{lem}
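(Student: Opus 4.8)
The plan is to verify the two claims of the lemma separately, the periodicity of each $\gamma_k$ and the identity \eqref{eq:sum-gam-k}, and to understand the role of the case split via the definition \eqref{eq:am-b} of $a_m$.

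First I would examine periodicity on each $J_j$. The only potential obstruction to $2\pi$-periodicity of $\gamma_k$ on $J_j$ is the multivaluedness of the complex logarithm: as $t$ traverses $J_j$ and $\eta_j(t)$ traces the Jordan curve $\Gamma_j$ once, the quantity $\log(\eta(t)-\alpha_k)$ changes by $2\pi\i$ times the winding number of $\Gamma_j$ about $\alpha_k$. Recall $\alpha_k\in G_k$, the bounded simply connected domain on the right of $\Gamma_k$. Hence for $j=k$ the winding number is $-1$ (the curves are oriented so that $G$ is on the left, so $\Gamma_k$ winds $-1$ times about interior points of $G_k$), while for $j\ne k$ the point $\alpha_k$ lies outside $G_j$ and the winding number is $0$ — \emph{except} when $\Gamma_j$ is an external boundary component enclosing all other curves, in which case $\alpha_k$ lies inside $\Gamma_j$ and the winding number is $+1$. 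In the case $\ell'=\ell$ and $G$ bounded this external component is some $\Gamma_m$, and the discontinuity $2\pi\i$ picked up on $J_m$ when multiplied by $\Re[e^{-\i\theta(t)}(\cdot)]$ with $\theta\equiv 0$ on $J_m$ contributes $0$ to the real part, so periodicity still holds; on $J_k$ the jump is $-2\pi\i$ but again $\theta\equiv 0$ there kills the real part. This is the step where I expect to spend the most care: tracking exactly which winding numbers are nonzero depending on which component is external and on the value of $\theta$ (which is $0$ on $J_1,\dots,J_m$ and $\pi/2$ on $J_{m+1},\dots,J_{m+\ell}$), and in the second case ($\ell'=\ell-1$) checking that $\alpha$, the auxiliary point associated with the external component $\Gamma_{m+\ell}$, produces a jump in $\log(\eta(t)-\alpha)$ on $J_{m+\ell}$ equal to $2\pi\i$, which is annihilated by $\Re[e^{-\i\pi/2}(\cdot)]=\Re[-\i(\cdot)]$ since $-\i\cdot 2\pi\i=2\pi\in\R$. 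The point of subtracting $\log(\eta(t)-\alpha)$ in the second case is precisely to cancel the winding of $\log(\eta(t)-\alpha_k)$ around $\Gamma_{m+\ell}$ when $\Gamma_{m+\ell}$ is the external component and $\theta=\pi/2$ there, where the naive real part would otherwise fail to be periodic.

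Next I would prove the identity \eqref{eq:sum-gam-k}. In the case $\ell'=\ell$ the functions $\gamma_k$ are \emph{defined} to be exactly $\Re[e^{-\i\theta(t)}\log(\eta(t)-\alpha_k)]$, so the identity is trivial. In the case $\ell'=\ell-1$, we have $m'=m$ when $B$ is bounded, so all of $a_1,\dots,a_m$ are the genuine periods from \eqref{eq:ak}; wait — more precisely, $\ell'=\ell-1$ forces $G$ bounded and $B$ bounded, hence $m'=m$. The difference between $\sum_{k=1}^{m'} a_k\gamma_k(t)$ and $\sum_{k=1}^{m'} a_k\Re[e^{-\i\theta(t)}\log(\eta(t)-\alpha_k)]$ is then $-\left(\sum_{k=1}^{m'} a_k\right)\Re[e^{-\i\theta(t)}\log(\eta(t)-\alpha)]$, and by \eqref{eq:sum-ak} the coefficient $\sum_{k=1}^{m} a_k=\sum_{k=1}^{m'}a_k=0$, so the extra term vanishes identically. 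Thus the identity holds in both cases.

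Finally I would assemble the argument: state the winding-number computation as the core lemma-within-the-proof, invoke it on each $J_j$ together with the explicit value of $\theta$ on $J_j$ to conclude periodicity, and then dispatch \eqref{eq:sum-gam-k} by the elementary observation above using \eqref{eq:sum-ak}. The main obstacle is purely bookkeeping: correctly enumerating, for each configuration of bounded/unbounded $G$ and $B$, which curve (if any) is external, hence which logarithmic terms wind, and confirming that in every such case the winding contribution lies in the kernel of $\Re[e^{-\i\theta(t)}\,\cdot\,]$ — equivalently that $2\pi\i \,e^{-\i\theta}$ is purely imaginary for $\theta\in\{0,\pi/2\}$, which it is. No deep estimate is needed; the content is entirely in the geometry of the auxiliary points relative to the oriented boundary.
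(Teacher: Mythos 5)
Your overall strategy is the same as the paper's: reduce periodicity to a winding-number computation for the logarithms and dispatch \eqref{eq:sum-gam-k} via $\sum_{k=1}^{m}a_k=0$ from \eqref{eq:sum-ak} (that part of your argument is correct and matches the paper). However, your treatment of the Neumann components contains a genuine error. You assert that the winding contribution always ``lies in the kernel of $\Re[e^{-\i\theta(t)}\,\cdot\,]$ --- equivalently that $2\pi\i\,e^{-\i\theta}$ is purely imaginary for $\theta\in\{0,\pi/2\}$.'' This is false for $\theta=\pi/2$: one has $2\pi\i\,e^{-\i\pi/2}=2\pi\i\cdot(-\i)=2\pi$, which is \emph{real}, so $\Re\bigl[e^{-\i\pi/2}\cdot 2\pi\i n\bigr]=2\pi n\neq 0$ whenever $n\neq0$. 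The same slip appears earlier where you claim the jump $2\pi\i$ of $\log(\eta(t)-\alpha)$ on $J_{m+\ell}$ is ``annihilated by $\Re[-\i(\cdot)]$ since $-\i\cdot2\pi\i=2\pi\in\R$'' --- being real is exactly the opposite of having zero real part. If this mechanism were the one at work, $\gamma_k$ would fail to be periodic on $J_{m+\ell}$ in case (b).

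The correct mechanism on $J_{m+1},\ldots,J_{m+\ell}$ (where $\theta=\pi/2$) is that the winding number itself must vanish, and this is what the paper proves: in case $\ell'=\ell$ none of the $\alpha_k$ lies interior to any $\Gamma_{m+j}$, so the winding number of $\eta(t)-\alpha_k$ is zero there; in case $\ell'=\ell-1$ the external curve $\Gamma_{m+\ell}$ encloses all of $\alpha,\alpha_1,\ldots,\alpha_{m'}$, so $\log(\eta(t)-\alpha_k)$ alone picks up a nonzero (and non-removable, since $\theta=\pi/2$) jump, and the quotient $\frac{\eta(t)-\alpha_k}{\eta(t)-\alpha}$ is used precisely because its winding number around $\Gamma_{m+\ell}$ is zero. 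You do state this cancellation-of-winding explanation in the sentence beginning ``The point of subtracting\dots,'' so the right idea is present in your text; but your concluding paragraph assembles the proof from the false ``purely imaginary'' claim rather than from it, and as written the argument does not go through. On the Dirichlet components $J_1,\ldots,J_m$, where $\theta=0$, your reasoning (jumps are integer multiples of $2\pi\i$, hence have zero real part) is correct and is all that is needed there.
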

\begin{proof}
Since $\theta(t)=0$ when $t\in J_j$ for each $j=1,2,\ldots,m$, then the functions $\gamma_k(t)$ in~\eqref{eq:gam-k} are periodic for $t\in J_j$ for each $j=1,2,\ldots,m$.

When $t\in J_j$ for each $j=m+1,m+2,\ldots,m+\ell$, we have the following two cases:

a) $\ell'=\ell$.
For this case, $\Gamma_{m+\ell}$ is not the external boundary component of $G$.
Recall that, for each $k=1,2,\ldots,m'$, $\alpha_k$ is in the interior of the curve $\Gamma_k$. Thus, none of the auxiliary points $\alpha_1,\ldots,\alpha_{m'}$ is interior to any of the curves $\Gamma_{m+1},\ldots,\Gamma_{m+\ell}$. Hence, the winding number of the function $z-\alpha_k$ is always zero along each boundary component $\Gamma_{m+k}$ for $k=1,2,\ldots,\ell$. Thus, we can always choose a branch cut of the logarithm function such that the functions $\gamma_k(t)$ given by the first formula in~\eqref{eq:gam-k} are periodic for $t\in J_j$ for each $j=m+1,m+2,\ldots,m+\ell$.

b)  $\ell'=\ell-1$.
For this case, $\Gamma_{m+\ell}$ is the external boundary component of $G$. Hence, none of the auxiliary points $\alpha,\alpha_1,\ldots,\alpha_{m'}$ is interior to any of the curves $\Gamma_{m+1},\ldots,\Gamma_{m+\ell-1}$. However, all the auxiliary points $\alpha,\alpha_1,\ldots,\alpha_{m'}$ are interior to the curve $\Gamma_{m+\ell}$. Thus, the winding number of the function $\frac{z-\alpha_k}{z-\alpha}$ is always zero along each boundary component $\Gamma_{m+k}$ for $k=1,2,\ldots,\ell$. Hence, we can choose a branch cut of the logarithm function such that the functions $\gamma_k(t)$ given by the second formula in~\eqref{eq:gam-k} are periodic for $t\in J_j$ for each $j=m+1,m+2,\ldots,m+\ell$. For this case, we need to prove also that equation~\eqref{eq:sum-gam-k} holds for the functions $\gamma_k(t)$ defined by the second formula in~\eqref{eq:gam-k}. Since $\Gamma_{m+\ell}$ is the external boundary component of $G$, we have $m'=m$, and by~\eqref{eq:sum-ak}, we have $\sum_{k=1}^{m'}a_k=0$. Thus,
\begin{eqnarray*}
\sum_{k=1}^{m'} a_k\gamma_k(t)
&=&\sum_{k=1}^{m'} a_k\Re\left[e^{-\i\theta(t)}\log\frac{\eta(t)-\alpha_k}{\eta(t)-\alpha}\right]\\
&=&\Re\left[e^{-\i\theta(t)}\log(\eta(t)-\alpha)\right]\sum_{k=1}^{m'} a_k+\sum_{k=1}^{m'} a_k\Re\left[e^{-\i\theta(t)}\log\frac{\eta(t)-\alpha_k}{\eta(t)-\alpha}\right]\\
&=&\sum_{k=1}^{m'} a_k\Re\left[e^{-\i\theta(t)}\log(\eta(t)-\alpha)+e^{-\i\theta(t)}\log\frac{\eta(t)-\alpha_k}{\eta(t)-\alpha}\right]\\
&=&\sum_{k=1}^{m'} a_k\Re\left[e^{-\i\theta(t)}\log(\eta(t)-\alpha_k)\right],
\end{eqnarray*}
and hence~\eqref{eq:sum-gam-k} holds for the functions $\gamma_k(t)$ defined by the second formula in~\eqref{eq:gam-k}.
\end{proof}

Taking into account~\eqref{eq:sum-gam-k}, we rewrite the boundary condition~\eqref{eq:get1} as
\begin{equation}\label{eq:get}
\Re\left[e^{-\i\theta(t)}g(\eta(t))\right]=\delta(t)+\nu(t)+\sum_{k=1}^{m'} a_k\gamma_k(t)
\end{equation}
where the functions $\gamma_k$ are defined by~\eqref{eq:gam-k}.
Since we are interesting in computing only $u=\Re F$, we can assume that $g(\infty)=c$ is real for unbounded $G$ and $g(\alpha)=c$ is real for bounded $G$. We introduce an auxiliary function $f$ defined in $G$ by
\begin{equation}\label{eq:f-g}
f(z) = \begin{cases}
g(z)-c, & {\rm if\;} G \;{\rm is\; unbounded}, \\
(g(z)-c)/(z-\alpha), & {\rm if\;} G \;{\rm is\; bounded}.
\end{cases}
\end{equation}
Then $f$ is a single-valued analytic function in $G$ with $f(\infty)=0$ for unbounded $G$.
Let $A(t)$ be the complex-valued function defined by~\cite{Nas-ETNA}
\begin{equation}\label{eq:A}
A(t) = \begin{cases}
e^{-\i\theta(t)}, & {\rm if\;} G \;{\rm is\; unbounded}, \\
e^{-\i\theta(t)}(\eta(t)-\alpha), & {\rm if\;} G \;{\rm is\; bounded}.
\end{cases}
\end{equation}
Hence the boundary condition~\eqref{eq:get} implies that the function $f$ is a solution of the following Riemann-Hilbert problem
\begin{equation}\label{eq:rhp-f}
\Re\left[A(t)f(\eta(t))\right]=-c\cos\theta(t)+\delta(t)+\nu(t)+\sum_{k=1}^{m'} a_k\gamma_k(t).
\end{equation}

Observe that solving the Riemann-Hilbert problem~\eqref{eq:rhp-f} requires finding the unknown analytic functions $f$ as well as the unknown real constants $a_1,\ldots,a_{m},c,\nu_1,\ldots,\nu_\ell$ in the right-hand side of~\eqref{eq:rhp-f}.

\section{The generalized Neumann kernel}

The generalized Neumann kernel $N(s,t)$ is defined for $(s,t)\in J\times J$ by~\cite{Weg-Nas}
\begin{equation*}
N(s,t)=
\frac{1}{\pi}\Im\left(\frac{A(s)}{A(t)}\frac{\dot\eta(t)}{\eta(t)-\eta(s)}\right).
\end{equation*}
Closely related to the kernel $N$ is the following kernel $M(s,t)$ defined for $(s,t)\in J\times J$ by~\cite{Weg-Nas}
\begin{equation*}
M(s,t) =
\frac{1}{\pi}\Re\left(\frac{A(s)}{A(t)}\frac{\dot\eta(t)}{\eta(t)-\eta(s)}\right).
\end{equation*}
The kernel $N$ is continuous and the kernel $M$ has a singularity of cotangent type~\cite{Weg-Nas}.

Let $H$ denote the space of all real-valued H\"older continuous functions on the boundary $\Gamma$. In this paper, for simplicity, if $\phi$ is a real-valued function defined on the boundary $\Gamma$, then we write $\phi(\eta(t))$ as $\phi(t)$. Further, any piecewise constant function $h\in H$ defined by
\begin{equation*}
h(t)=h_k\quad {\rm for}\quad t\in J_k,
\end{equation*}
with real constants $h_k$ for $k=1,\ldots,m+\ell$ will be denoted by
\begin{equation*}
h(t)=(h_1,\ldots,h_{m+\ell}),\quad t\in J.
\end{equation*}
The integral operators with the kernels $N(s,t)$ and $M(s,t)$ are defined on $H$ by
\begin{eqnarray}
\label{eq:bN}
(\bN \phi)(s) &=& \int_J N(s,t) \phi(t) \, dt, \quad s \in J, \\
\label{eq:bM}
(\bM \phi)(s) &=& \int_J M(s,t) \phi(t) \, dt, \quad s \in J.
\end{eqnarray}
The identity operator on $H$ will be denoted by $\bI$. Then, we have the following theorem from~\cite{Nas-JMAA1}.

\begin{thm}\label{thm:ie}
For each $k=1,2,\ldots,m'$, let the function $\gamma_k$ be given by~\eqref{eq:gam-k}. Then, there exists a unique real-valued function $\mu_k\in H$ and a unique piecewise constant real-valued function $h_k=(h_{1,k},h_{2,k},\ldots,h_{m+\ell,k})$ such that
\begin{equation}\label{eq:fet-k}
A(t)f_k(\eta(t))=\gamma_k(t)+h_k(t)+\i\mu_k(t), \quad t \in J,
\end{equation}
are boundary values of an analytic function $f_k$ in $G$ with $f(\infty)=0$ for unbounded $G$. The function $\mu_k$ is the unique solution of the integral equation
\begin{equation}\label{eq:ie-k}
(\bI-\bN)\mu_k=-\bM\gamma_k
\end{equation}
and the function $h_k$ is given by
\begin{equation}\label{eq:h-k}
h_k=[\bM\mu_k-(\bI-\bN)\gamma_k]/2.
\end{equation}
\end{thm}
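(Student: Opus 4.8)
The statement is a known result (cited from~\cite{Nas-JMAA1}) asserting existence and uniqueness for the Riemann–Hilbert problem with boundary data $\gamma_k$, together with the explicit integral-equation characterization of the imaginary part $\mu_k$ and the piecewise-constant defect $h_k$. The plan is to reduce everything to the general solvability theory of the Riemann–Hilbert problem $\Re[A(t)f(\eta(t))]=\phi(t)$ with the coefficient $A$ from~\eqref{eq:A} and to invoke the Fredholm theory of the operator $\bI-\bN$ on the Hölder space $H$.

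First I would recall the structure of $A(t)$: on the inner curves $\Gamma_1,\dots,\Gamma_m$ (for bounded $G$, multiplied by the nonvanishing factor $\eta(t)-\alpha$) it is essentially a nonzero constant of unit modulus, and on the outer curves $\Gamma_{m+1},\dots,\Gamma_{m+\ell}$ it carries the factor $e^{-\i\pi/2}$. The key point is that the index (winding number) of $A$ along each boundary component vanishes, because $\theta$ is piecewise constant on each $J_k$ and $\eta(t)-\alpha$ has winding number zero around $\Gamma_k$ when $\alpha$ is chosen as in Section~2. With zero partial indices, the homogeneous Riemann–Hilbert problem has only the trivial solution in the class normalized by $f(\infty)=0$, and this translates, via the classical correspondence between Riemann–Hilbert problems and boundary integral equations with the generalized Neumann kernel, into invertibility of $\bI-\bN$ on $H$. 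This is exactly the setting in which the generalized Neumann kernel machinery of~\cite{Weg-Nas,Nas-ETNA} applies.

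Granting invertibility of $\bI-\bN$, I would proceed as follows. Given $\gamma_k\in H$ (Hölder continuity of $\gamma_k$ follows from Lemma~\ref{lem:gam-k} together with piecewise smoothness of the $\Gamma_j$ and the fact that $\alpha_k,\alpha$ lie off the boundary), define $\mu_k$ as the unique solution of~\eqref{eq:ie-k}, and then define $h_k$ by~\eqref{eq:h-k}. One then verifies that $\gamma_k+h_k+\i\mu_k$, divided by $A(t)$, is the boundary value of an analytic function $f_k$ on $G$: this is the content of the decomposition theorem for the generalized Neumann kernel, which says that for any $\gamma\in H$ there is a unique pair $(\mu,h)$ with $\mu$ solving $(\bI-\bN)\mu=-\bM\gamma$ and $h=[\bM\mu-(\bI-\bN)\gamma]/2$ such that $A^{-1}(\gamma+h+\i\mu)$ extends analytically into $G$, with $f(\infty)=0$ in the unbounded case. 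Uniqueness of $\mu_k$ is immediate from invertibility of $\bI-\bN$; uniqueness of $h_k$ then follows from~\eqref{eq:h-k}; and uniqueness of $f_k$ follows from the vanishing index and the normalization. Finally one checks that the piecewise-constant structure of $h_k$ is forced: since $\bM$ maps into functions that are Hölder continuous off the finitely many jump points and the combination $\bM\mu_k-(\bI-\bN)\gamma_k$ is required to be piecewise constant by the solvability condition, $h_k$ indeed has the form $(h_{1,k},\dots,h_{m+\ell,k})$.

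The main obstacle is establishing the invertibility of $\bI-\bN$ on $H$ for the present coefficient $A$; everything else is bookkeeping once that is in place. Concretely, one must rule out nontrivial solutions of the homogeneous problem, which amounts to showing that a function analytic in the finitely connected domain $G$ whose boundary values satisfy $\Re[A(t)f]=0$ on all of $\Gamma$, and which is normalized appropriately, must be identically zero. For the inner components this is a Dirichlet-type constraint ($\Re f=0$ there after the constant rotation), for the outer components a Neumann-type constraint ($\Im f$ constant, i.e.\ $\partial u/\partial\bn=0$), and the only such harmonic function with these mixed conditions and $m\ge 2$ Dirichlet pieces at level zero is the zero function — which is precisely the uniqueness for the mixed problem~\eqref{eq:mix-bd} already noted. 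I would then quote the Fredholm alternative for $\bI-\bN$ (the kernel $N$ being continuous, so $\bN$ is compact on $H$) to conclude that $\bI-\bN$ is bijective, and assemble the explicit formulas~\eqref{eq:ie-k}–\eqref{eq:h-k} from the adjoint/transpose relations between $\bN$, $\bM$, and the Cauchy operator, exactly as in~\cite{Nas-JMAA1}.
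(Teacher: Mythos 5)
The paper does not prove Theorem~\ref{thm:ie} at all: it is imported verbatim from~\cite{Nas-JMAA1} (``we have the following theorem from \cite{Nas-JMAA1}''), so there is no in-paper argument to compare yours against. Your sketch is a reasonable reconstruction of the standard Wegmann--Nasser route (reduce to the Riemann--Hilbert problem with coefficient $A$, show $\bI-\bN$ is injective, invoke the Fredholm alternative, read off \eqref{eq:ie-k}--\eqref{eq:h-k}), and the reduction of the homogeneous problem to uniqueness for the mixed problem \eqref{eq:mix-bd} is essentially how the null space of $\bI-\bN$ is controlled in that literature.

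There is, however, one concrete slip. Your claim that ``the index (winding number) of $A$ along each boundary component vanishes'' is false in the bounded case: by \eqref{eq:A} the factor $\eta(t)-\alpha$ with $\alpha\in G$ has winding number $1$ on the external boundary component (which is traversed counterclockwise) and $0$ on the inner ones, so $A$ has a nonzero partial index there. Consequently the assertion ``with zero partial indices the homogeneous Riemann--Hilbert problem has only the trivial solution, hence $\bI-\bN$ is invertible'' does not apply verbatim; indeed the bare problem $\Re[A f]=\gamma$ is in general \emph{not} solvable for this $A$ (the paper itself says so just before Lemma~\ref{lem:solv}), and the piecewise constant correction $h_k$ exists precisely to absorb the resulting $(m+\ell)$-dimensional obstruction. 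The correct mechanism, as in \cite{Weg-Nas,Nas-JMAA1}, is an index computation for this specific $A$ showing that $\lambda=1$ is not an eigenvalue of $\bN$ while the adjoint problem has an $(m+\ell)$-dimensional space of piecewise constant ``defects,'' which is exactly what produces the unique $h_k=(h_{1,k},\ldots,h_{m+\ell,k})$. A second, smaller point: continuity of the kernel $N$ gives compactness of $\bN$ on $C(J)$ easily, but compactness on the H\"older space $H$ (which is where the Fredholm alternative must be applied to stay in the class of H\"older boundary data) requires the additional kernel regularity established in \cite{Weg-Nas}; as stated, that step is glossed over.
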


The integral equation~\eqref{eq:ie-k} been used for computing the conformal map from bounded and unbounded multiply connected domains onto several canonical slit domains, see e.g.,~\cite{Nas-Siam1,Nas-JMAA1,Nas-ETNA}.
The following lemma is needed to prove Theorems~\ref{thm:c1} and~\ref{thm:c2} below.

\begin{lem}\label{lem:solv}
If $f$ is an analytic function in $G$ with $f(\infty)=0$ for unbounded $G$ such that its boundary values satisfy the boundary condition
\begin{equation}\label{eq:rhp-g}
\Re[A(t)f(\eta(t))]=\gamma(t)
\end{equation}
for a piecewise constant real-valued function $\gamma(t)=(c_{1},c_{2},\ldots,c_{m+\ell})$, then $f$ is the zero function and $c_{1}=c_{2}=\cdots=c_{m+\ell}=0$.
\end{lem}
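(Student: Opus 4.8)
The plan is to analyze the boundary condition $\Re[A(t)f(\eta(t))]=\gamma(t)$ separately on the two families of boundary components, using the explicit form of $A(t)$ from~\eqref{eq:A} and of $\theta(t)$ from~\eqref{eq:theta}. Recall that $A(t)=e^{-\i\theta(t)}$ (unbounded $G$) or $A(t)=e^{-\i\theta(t)}(\eta(t)-\alpha)$ (bounded $G$), and $\theta=0$ on $J_1,\ldots,J_m$ while $\theta=\pi/2$ on $J_{m+1},\ldots,J_{m+\ell}$. Writing $F$ for the function $f$ multiplied back by $(z-\alpha)$ in the bounded case (so that in all cases we study an analytic function whose real part is what we control), the condition says: on the plate curves $\Gamma_1,\ldots,\Gamma_m$ the function $\Re F$ is constant (equal to $c_k$), and on the outer curves $\Gamma_{m+1},\ldots,\Gamma_{m+\ell}$ we have $\Re[-\i F]=c_{m+k}$, i.e.\ $\Im F$ is constant there, which by the Cauchy--Riemann relation $\partial u/\partial\bn=\Re[-\i\eta'(t)F'(\eta(t))]/|\eta'(t)|$ (as in~\eqref{eq:0-par-u}) means $\partial(\Re F)/\partial\bn=0$ on those curves.

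So $u:=\Re F$ is a harmonic function on $G$ satisfying exactly the homogeneous version of the mixed boundary value problem~\eqref{eq:mix-bd}: Dirichlet data $u=c_k$ (constant) on $\Gamma_k$, $k=1,\ldots,m$, and Neumann data $\partial u/\partial\bn=0$ on $\Gamma_{m+1},\ldots,\Gamma_{m+\ell}$. First I would invoke the uniqueness statement already cited in the paper (the problem~\eqref{eq:mix-bd} has a unique solution, \cite{IS19}): the function $u$ and the function whose plate-values are all $\min_k c_k$ extended appropriately would both solve a mixed problem only after one knows the $c_k$ coincide, so instead I would argue directly. By the maximum principle for harmonic functions on the finitely connected domain $G$ (with $\infty$ a removable point for $u$ since $f(\infty)=0$, hence $F$ is bounded near $\infty$), $u$ attains its maximum and minimum on $\partial G$; along the Neumann portions the Hopf lemma forbids the extremum being attained there unless $u$ is constant. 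Hence the extrema are attained on $\bigcup_{k=1}^m\Gamma_k$, so $\max u=\max_k c_k$ and $\min u=\min_k c_k$. To upgrade this to $u\equiv$ const I would use the energy/Green identity: since $\partial u/\partial\bn=0$ on the outer curves and $u=c_k$ is constant on each plate curve, \eqref{eq:cap-n} gives $\iint_G|\nabla u|^2\,dx\,dy=\sum_{k=1}^m c_k\int_{\Gamma_k}(\partial u/\partial\bn)\,ds$; combined with $\sum_k\int_{\Gamma_k}(\partial u/\partial\bn)\,ds=0$ from~\eqref{eq:u-k}–\eqref{eq:mix-bd-2} this is not yet zero, so instead I would subtract $c_1$: $u-c_1$ is harmonic, has constant boundary values $c_k-c_1$ on the plates, zero normal derivative on the outer curves, and $\iint_G|\nabla u|^2 = \sum_{k=1}^m(c_k-c_1)\int_{\Gamma_k}(\partial u/\partial\bn)\,ds$. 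Iterating/using that the $\int_{\Gamma_k}(\partial u/\partial\bn)\,ds$ sum to zero, one shows the Dirichlet integral must vanish once one also knows the $c_k$ are equal.

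The cleanest route, and the one I would actually write, is: apply the maximum principle as above to conclude $\min_k c_k\le u\le\max_k c_k$; then observe that at a plate $\Gamma_{k_0}$ where $c_{k_0}=\max_k c_k$, the interior maximum principle forces $u\equiv c_{k_0}$ throughout $G$ unless that maximum is also attained on the boundary only trivially — more precisely, if some $c_{k_1}<c_{k_0}$ then $u$ is nonconstant, attains its max $c_{k_0}$ on $\Gamma_{k_0}\subset\partial G$ (allowed) but by the strong maximum principle $u<c_{k_0}$ in $G$; that is consistent, so the maximum principle alone does not kill it. Therefore the decisive step is the energy identity combined with the Neumann condition: writing $v=u-\bar c$ for a suitable constant $\bar c$ and using Green's formula, the positivity of $\iint_G|\nabla u|^2$ together with $\partial u/\partial\bn=0$ on $\partial B$ and $\sum_k a_k=0$ (from~\eqref{eq:sum-ak}) forces $\nabla u\equiv 0$, hence $u\equiv$ const, hence by~\eqref{eq:ak}-type arguments each $c_k$ equals that same constant; finally $u\equiv$ const and $f(\infty)=0$ give $F\equiv$ const, and then the constant is $0$ because $\Re(iF)$ constant plus $\Re F$ constant pin $F$ to a real constant which the normalization kills, so $f\equiv 0$ and all $c_j=0$. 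The main obstacle is making the energy argument airtight when the $c_k$ are a priori distinct: one must be careful that $\iint_G|\nabla u|^2 = \sum_{k=1}^m c_k\int_{\Gamma_k}(\partial u/\partial\bn)\,ds$ is not obviously nonpositive, so the real content is to show this sum is in fact $\le 0$ (equivalently that the quadratic form $(c_k)\mapsto\sum_k c_k\int_{\Gamma_k}(\partial u/\partial\bn)\,ds$, where the fluxes depend linearly on $(c_k)$, is negative semidefinite), which follows from the same Green identity applied to $u$ itself; I would set that up as the symmetric capacity matrix $(c)\mapsto -(c)^TA(c)$ with $A\succeq 0$, conclude the Dirichlet integral vanishes, and read off $u\equiv$ const and then $c_1=\cdots=c_{m+\ell}=0$.
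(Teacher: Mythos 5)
Your reduction of the boundary condition \eqref{eq:rhp-g} to a mixed Dirichlet--Neumann problem for $u=\Re F$ (with $F=(z-\alpha)f$ in the bounded case, $F=f$ otherwise) is a legitimate and genuinely different route from the paper, which instead deduces the lemma directly from the uniqueness of the piecewise constant correction $h$ in Theorem~\ref{thm:ie}. But your argument has a genuine gap at exactly the point you flag as ``the decisive step.'' The Green identity gives
\begin{equation*}
\iint_G|\nabla u|^2\,dx\,dy=\sum_{k=1}^{m}c_k\int_{\Gamma_k}\frac{\partial u}{\partial\bn}\,ds ,
\end{equation*}
and this quadratic form in $(c_1,\ldots,c_m)$ \emph{is} the Dirichlet energy of the mixed-problem solution, hence positive semidefinite and in general strictly positive (take the annulus with $c_1=1$, $c_2=0$: the solution is $\log|z|/\log q$, whose energy is the positive capacity). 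So your claim that ``the same Green identity'' shows the form is negative semidefinite is false, and the relation $\sum_k\int_{\Gamma_k}(\partial u/\partial\bn)\,ds=0$ from \eqref{eq:u-k} only kills the \emph{sum} of the fluxes, not each one. The maximum-principle detour, as you yourself note, is also inconclusive. The conclusion of the lemma simply cannot follow from the boundary data alone, because the genuine mixed-BVP solution with distinct $c_k$ satisfies all the boundary conditions you use and is not constant.

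The missing ingredient is the hypothesis you never exploit: $f$ is a \emph{single-valued} analytic function in $G$, so $F$ is single-valued and its imaginary part $v=\Im F$ is a single-valued conjugate of $u$. Consequently every period vanishes individually,
\begin{equation*}
\int_{\Gamma_k}\frac{\partial u}{\partial\bn}\,ds=\oint_{\Gamma_k}dv=0,\qquad k=1,\ldots,m+\ell
\end{equation*}
(this is precisely why the representation \eqref{eq:F-u2} needs the logarithmic terms with coefficients \eqref{eq:ak} in the first place). With each flux equal to zero, the Green identity gives $\iint_G|\nabla u|^2=\sum_k c_k\cdot 0=0$, so $u$ is constant, hence $F$ is constant by Cauchy--Riemann; the normalization $F(\infty)=0$ (unbounded case) or $F(\alpha)=0$ (bounded case, since $\alpha\in G$) forces $F\equiv0$, whence $f\equiv0$ and all $c_j=0$. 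One must also check convergence of the Dirichlet integral near $\infty$ in the unbounded case, which follows from $f(\infty)=0$ giving $F'(z)=O(|z|^{-2})$. With this period argument inserted, your approach closes; it is essentially the argument the paper itself uses later inside the proof of Theorem~\ref{thm:c1}, whereas the paper's own proof of this lemma is the shorter appeal to the uniqueness of $h$ in Theorem~\ref{thm:ie}.
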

\begin{proof}
The solvability of the Riemann-Hilbert problem~\eqref{eq:rhp-g} depends on the winding number of the function $A$. For the function $A$ defined in~(\ref{eq:A}), the Riemann-Hilbert problem~\eqref{eq:rhp-g} is not necessarily solvable~\cite{Nas-ETNA}. However, by Theorem~\ref{thm:ie}, a unique piecewise constant real-valued function $h(t)=(h_{1},h_{2},\ldots,h_{m+\ell})$ exists such that the Riemann-Hilbert problem
\[
\Re[A(t)f(\eta(t))]=\gamma(t)+h(t)
\]
is uniquely solvable (see also~\cite{Nas-ETNA,Weg-Nas}). By the uniqueness of the piecewise constant function $h$ and since the function $\gamma$ is a piecewise constant function, the function $h$ must be given by $h(t)=-\gamma(t)$ since the problem
\[
\Re[A(t)f(\eta(t))]=\gamma(t)+h(t)=0
\]
will be solvable and has the zero solution $f(z)=0$.
\end{proof}

In the remaining part of this section, we shall use Theorem~\ref{thm:ie} to present a method for computing the real constants $a_1,\ldots,a_m$ and hence computing $\capa(C)$ through~\eqref{eq:cap-k}. Recall from~\eqref{eq:mp} that either $m'=m$ or $m'=m-1$. These two cases of $m'$ will be considered separately in the following two subsections.  

\subsection{Case I: $m'=m$}

This case includes the following two subcases:
\begin{enumerate}
	\item Both $G$ and $B$ are unbounded (see Figure~\ref{fig:gc-c1-1}). For this subcase, we have $m'=m\ge2$, $\ell'=\ell\ge0$ (where $B=\CC$ for $\ell=0$), $A$ is given by the first formula in~\eqref{eq:A}, and the functions $\gamma_k$ for $k=1,2,\ldots,m$ are given by the first formula in~\eqref{eq:gam-k}.
	\item Both $G$ and $B$ are bounded (see Figure~\ref{fig:gc-c1-2}).  For this subcase, we have $m'=m\ge2$, $\ell'=\ell-1\ge0$, $\Gamma_{m+\ell}$ is the external boundary component of $G$, $A$ is given by the second formula in~\eqref{eq:A}, and the functions $\gamma_k$ for $k=1,2,\ldots,m$ are given by the second formula in~\eqref{eq:gam-k}.
\end{enumerate}
For these two subcases, all the simply connected domains $G_1,\ldots,G_{m}$ are bounded (see Figures~\ref{fig:gc-c1-1} and~\ref{fig:gc-c1-2}). In Figures~\ref{fig:gc-c1-1} and~\ref{fig:gc-c1-2}, and in all figures throughout the paper, the boundaries of the domain $B$ are the ``dash-dotted'' curves and the boundaries of the plates of the condenser are the ``solid'' curves.

\begin{figure}[ht] %
\centerline{
\scalebox{0.5}{\includegraphics[trim=0 0 0 0,clip]{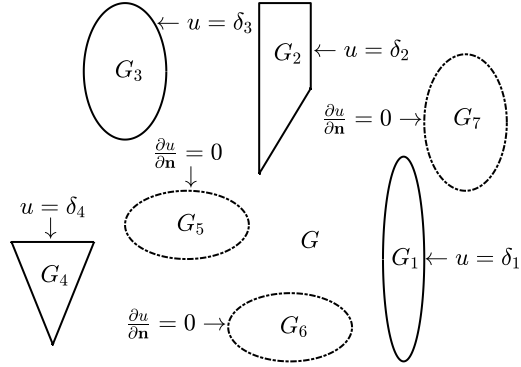}}
}
\caption{An example of an unbounded multiply connected domain $G$ for $m=4$ and $\ell=3$ for Case I (both $G$ and $B$ are unbounded).}
\label{fig:gc-c1-1}
\end{figure}

\begin{figure}[ht] %
\centerline{
\scalebox{0.5}{\includegraphics[trim=0 0 0 0,clip]{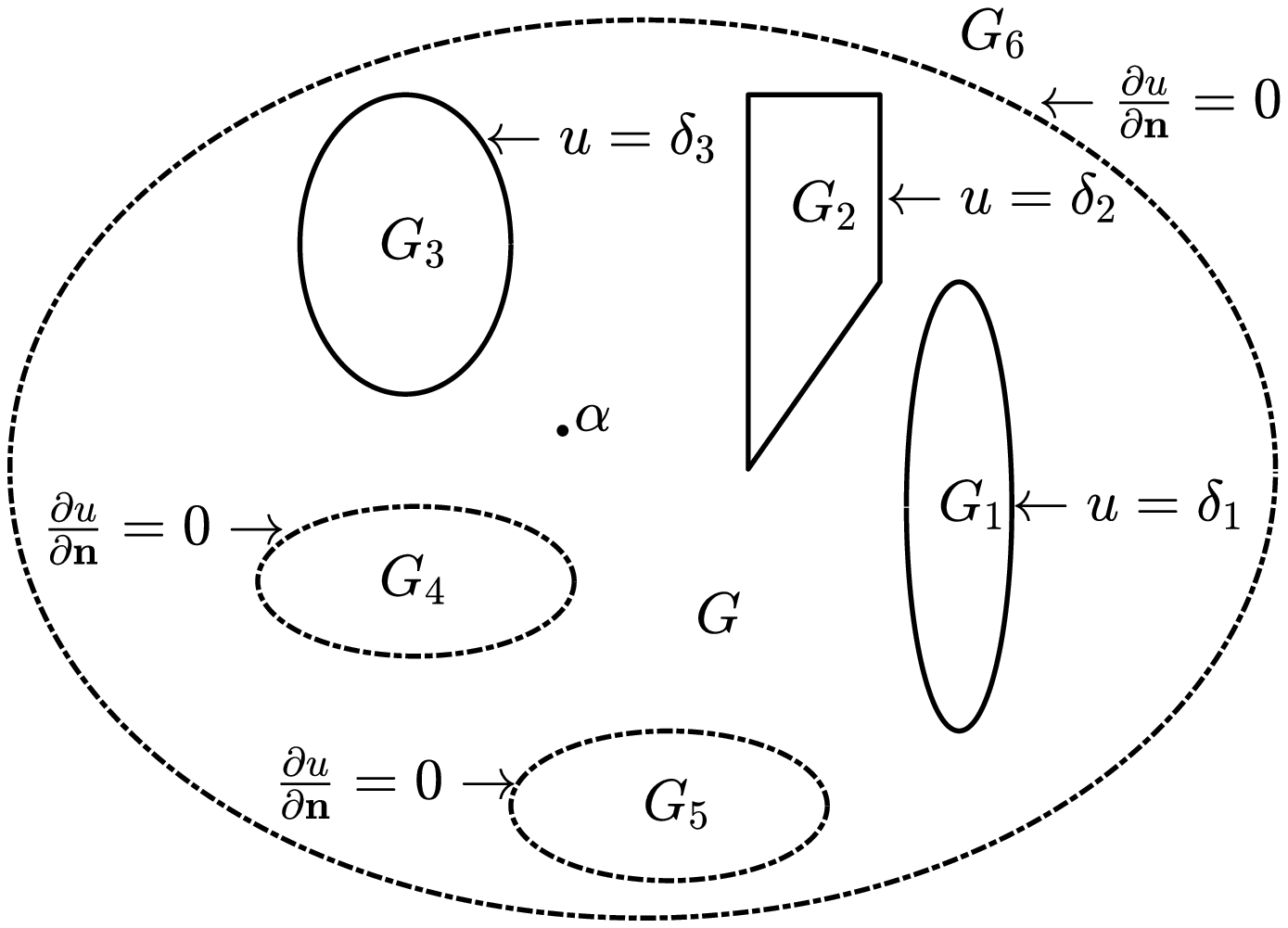}}
}
\caption{An example of a bounded multiply connected domain $G$ for $m=3$ and $\ell=3$ for Case I (both $G$ and $B$ are bounded).}
\label{fig:gc-c1-2}
\end{figure}

The following theorem provides us with a method for computing the unknown real constants $a_1,\ldots,a_{m}$. The theorem will be proved using an approach similar to the approach used in proving Theorems~4.2 and~4.3 in~\cite{NLS16},

\begin{thm}\label{thm:c1}
For each $k=1,2,\ldots,m$, let the function $\gamma_k$ be defined by~\eqref{eq:gam-k}, $\mu_k$ be the unique solution of the integral equation~\eqref{eq:ie-k}, and the piecewise constant function $h_k=(h_{1,k},h_{2,k},\ldots,h_{m+\ell,k})$ be given by~\eqref{eq:h-k}. Then, the boundary values of the function $f$ in~\eqref{eq:rhp-f} are given by
\begin{equation}\label{eq:Af-casI}
A(t)f(\eta(t))=\sum_{k=1}^{m}a_k[\gamma_k(t)+h_k(t)+\i\mu_k(t)]
\end{equation}
and the $m+\ell+1$ unknown real constants $a_1,\ldots,a_{m},c,\nu_1,\ldots,\nu_\ell$ are the components of the unique solution vector of the linear system
\begin{equation}\label{eq:sys-c1}
\left[\begin{array}{ccc:c:ccc}
h_{1,1}      &\cdots &h_{1,m}      &1      &          &         & \\
\vdots       &\ddots &\vdots       &\vdots &          &\bigzero &  \\
h_{m,1}      &\cdots &h_{m,m}      &1      &          &         &  \\ \cdashline{4-7}
h_{m+1,1}    &\cdots &h_{m+1,m}    &0      &-1        &         &\bigzeroU  \\
\vdots       &\ddots &\vdots       &\vdots &          &\ddots   &  \\
h_{m+\ell,1} &\cdots &h_{m+\ell,m} &0      &\bigzeroL &         &-1 \\ \hdashline
1            &\cdots &1            &0      &0         &\cdots   &0 \\
\end{array}\right]
\left[\begin{array}{c}
a_1    \\ \vdots \\ a_{m} \\ \hdashline c \\ \hdashline \nu_1 \\ \vdots \\ \nu_\ell
\end{array}\right]
= \left[\begin{array}{c}
\delta_1 \\  \vdots \\ \delta_m \\  0  \\ \vdots \\ 0 \\ \hdashline 0
\end{array}\right].
\end{equation}
\end{thm}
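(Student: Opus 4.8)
The plan is to manufacture a particular analytic function from the data provided by Theorem~\ref{thm:ie}, compare it with the unknown function $f$ of~\eqref{eq:rhp-f}, and then exploit the rigidity furnished by Lemma~\ref{lem:solv}. For $k=1,\dots,m$ let $f_k$, $\mu_k$ and $h_k=(h_{1,k},\dots,h_{m+\ell,k})$ be as in Theorem~\ref{thm:ie}, and let $a_1,\dots,a_m,c,\nu_1,\dots,\nu_\ell$ be the real constants appearing in~\eqref{eq:rhp-f}. First I would set $\hat f=\sum_{k=1}^m a_k f_k$, which is analytic in $G$ with $\hat f(\infty)=0$ for unbounded $G$, and note from~\eqref{eq:fet-k} that $A(t)\hat f(\eta(t))=\sum_{k=1}^m a_k[\gamma_k(t)+h_k(t)+\i\mu_k(t)]$, hence $\Re[A(t)\hat f(\eta(t))]=\sum_{k=1}^m a_k\gamma_k(t)+\sum_{k=1}^m a_k h_k(t)$. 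Subtracting this from~\eqref{eq:rhp-f} shows that $f-\hat f$ is analytic in $G$, vanishes at $\infty$ for unbounded $G$, and satisfies $\Re[A(t)(f-\hat f)(\eta(t))]=-c\cos\theta(t)+\delta(t)+\nu(t)-\sum_{k=1}^m a_k h_k(t)$. Using the explicit forms of $\theta$, $\delta$ and $\nu$ in~\eqref{eq:theta}, this right-hand side is the piecewise constant function equal to $\delta_j-c-\sum_{k=1}^m a_k h_{j,k}$ on $J_j$ for $1\le j\le m$ and to $\nu_j-\sum_{k=1}^m a_k h_{m+j,k}$ on $J_{m+j}$ for $1\le j\le\ell$. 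Lemma~\ref{lem:solv} then forces $f-\hat f\equiv0$, which is~\eqref{eq:Af-casI}, and forces each of these constants to vanish; the resulting $m+\ell$ identities are the first $m+\ell$ rows of~\eqref{eq:sys-c1}, and the last row is~\eqref{eq:sum-ak}. Hence the vector $(a_1,\dots,a_m,c,\nu_1,\dots,\nu_\ell)$ solves~\eqref{eq:sys-c1}.

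It then remains to show that~\eqref{eq:sys-c1} is uniquely solvable, i.e., that its coefficient matrix is nonsingular, which I would do by proving that the homogeneous system has only the zero solution. Suppose $(a_1,\dots,a_m,c,\nu_1,\dots,\nu_\ell)$ solves it; in particular $\sum_{k=1}^m a_k=0$ by the last row. Put $f=\sum_{k=1}^m a_k f_k$, recover $g$ by inverting~\eqref{eq:f-g} ($g=f+c$ for unbounded $G$ and $g=(z-\alpha)f+c$ for bounded $G$, so that $g$ is single-valued analytic in $G$, with $g(\infty)=c$ real in the first case and $g(\alpha)=c$ real in the second), and set $F=g-\sum_{k=1}^m a_k\log(z-\alpha_k)$ and $u=\Re F$. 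Reading backwards the derivation of~\eqref{eq:rhp-f} from the boundary value problem --- and using~\eqref{eq:sum-gam-k}, which is applicable in the subcase $\ell'=\ell-1$ exactly because $\sum_{k=1}^m a_k=0$ --- one checks that $u$ is single-valued and harmonic in $G$, continuous on $\overline{G}$ (here one uses that $g$ extends Hölder continuously to $\Gamma$ and that each $\alpha_k\notin\overline{G}$), equal to $0$ on $\Gamma_k$ for $k=1,\dots,m$, and, after differentiating the relation $\Re[-\i F(\eta(t))]=\nu_j$ along $\Gamma_{m+j}$ and using~\eqref{eq:0-par-u}, satisfying $\partial u/\partial\bn=0$ on $\Gamma_{m+j}$ for $j=1,\dots,\ell$. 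Thus $u$ solves the mixed boundary value problem~\eqref{eq:mix-bd} with every $\delta_k$ replaced by $0$, and so $u\equiv0$ by the uniqueness of that problem~\cite{IS19}. Then $\partial u/\partial\bn\equiv0$ on $\Gamma$, and since $m'=m$ in Case~I, \eqref{eq:ak} gives $a_k=0$ for all $k=1,\dots,m$; substituting this into the homogeneous equations yields $c=0$ and $\nu_j=0$ for all $j$. Hence the homogeneous system is trivial, \eqref{eq:sys-c1} is uniquely solvable, and its unique solution is the vector found in the first part.

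The first paragraph is routine --- bookkeeping with~\eqref{eq:theta} followed by a single application of Lemma~\ref{lem:solv}. The main obstacle is the uniqueness argument: one must verify carefully that the function $u$ built from a homogeneous solution genuinely satisfies~\eqref{eq:mix-bd}, in particular that $u$ is single-valued and continuous up to $\overline{G}$, and that the constancy of $\Im F$ on each component $\Gamma_{m+j}$ really encodes the Neumann condition $\partial u/\partial\bn=0$ there; one must also take care to invoke Lemma~\ref{lem:gam-k} (equation~\eqref{eq:sum-gam-k}) only after recording that $\sum_{k=1}^m a_k=0$ in the homogeneous system. Once $u\equiv0$ is in hand, deducing $a_k=0$ from~\eqref{eq:ak} and then $c=0$, $\nu_j=0$ is immediate.
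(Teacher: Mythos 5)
Your proof is correct and follows essentially the same route as the paper: construct $\hat f=\sum_{k}a_kf_k$ from Theorem~\ref{thm:ie}, apply Lemma~\ref{lem:solv} to identify $f$ with $\hat f$ and read off the linear system, and prove nonsingularity by turning a homogeneous solution into a solution of the mixed boundary value problem~\eqref{eq:mix-bd} and invoking its uniqueness. The only differences are cosmetic and harmless: you fold $c$ into $g$ so that the Dirichlet data become $0$ rather than $-c$, and you conclude $a_k=0$ from the flux formula~\eqref{eq:ak} rather than from the single-valuedness of $\sum_k a_k\log(z-\alpha_k)$ as the paper does --- these are equivalent period computations.
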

\begin{proof}
Suppose that $f$ is the analytic function in $G$ with $f(\infty)=0$ for unbounded $G$ and satisfies the boundary condition~\eqref{eq:rhp-f}. Suppose also that $\hat f$ is defined in $G$ by
\begin{equation}\label{eq:hf1-c1}
\hat f(z)=\sum_{k=1}^m a_k f_k(z)
\end{equation}
where $f_k$ are as in Theorem~\ref{thm:ie} and the constants $a_1,\ldots,a_m$ satisfy the condition~\eqref{eq:sum-ak}. Then $\hat f$ is analytic in $G$ with $f(\infty)=0$ for unbounded $G$ and the boundary values of $\hat f$ satisfy
\begin{equation}\label{eq:hf-c1}
\Re\left[A(t)\hat f(\eta(t))\right]=\sum_{k=1}^ma_k\gamma_k(t)+\sum_{k=1}^ma_kh_k(t).
\end{equation}
Then the function $\Psi$ defined by $\Psi(z)=\hat f(z)-f(z)$ is analytic in $G$ with $\Psi(\infty)=0$ for unbounded $G$. Since $m'=m$, it follows from~\eqref{eq:rhp-f} and~\eqref{eq:hf-c1} that
\begin{equation}\label{eq:Psi-bc}
\Re[A(t)\Psi(\eta(t))]=\sum_{k=1}^ma_kh_k(t)+c\cos\theta(t)-\delta(t)-\nu(t).
\end{equation}
The right-hand side is a piecewise constant function, and then Lemma~\ref{lem:solv} implies that $\Psi$ is the zero function and hence $f(z)=\hat f(z)$. Thus, \eqref{eq:Af-casI} follows from~\eqref{eq:fet-k} and~\eqref{eq:hf1-c1}. Further, since $\Psi$ is the zero function, the right-hand side of~\eqref{eq:Psi-bc} is also the zero function and hence
\begin{equation}\label{eq:cd1-c1}
\sum_{k=1}^ma_kh_k+c\cos\theta(t)-\nu(t)=\delta(t).
\end{equation}
Since, in view of~\eqref{eq:theta}, $\cos\theta(t)=1$ for $t\in J_k$ for $k=1,2,\ldots,m$ and $\cos\theta(t)=0$ for $t\in J_k$ for $k=m+1,m+2,\ldots,m+\ell$, then~\eqref{eq:cd1-c1} and~\eqref{eq:sum-ak} imply that the real constants $a_1,\ldots,a_{m},c,\nu_1,\ldots,\nu_\ell$ are the components of a solution vector of the linear system~\eqref{eq:sys-c1}.

To show that the linear system~\eqref{eq:sys-c1} has a unique solution, let $[a_1,\ldots,a_{m},c,\nu_1,\ldots,\nu_\ell]^T$ be a solution to the homogeneous linear system obtained by assuming that the right-hand side of~\eqref{eq:sys-c1} is the zero vector. Then, the homogeneous system implies that
\begin{equation}\label{eq:cd2-c1}
\sum_{k=1}^ma_kh_k+c\cos\theta(t)-\nu(t)=0,  \quad \sum_{k=1}^ma_k=0.
\end{equation}
Assume that the functions $f_k$ are as in Theorem~\ref{thm:ie} and $\hat f$ is defined by~\eqref{eq:hf1-c1}. Hence, in view of~\eqref{eq:hf-c1}, the boundary values of the function $\hat f$ satisfy
\begin{equation}\label{eq:hf2-c1}
\Re\left[A(t)\hat f(\eta(t))\right]=\sum_{k=1}^ma_k\gamma_k(t)+\nu(t)-c\cos\theta(t).
\end{equation}
Then, we define a function $\hat F$ in $G$ by
\begin{equation}\label{eq:hF-c1}
\hat F(z) = \begin{cases}
\displaystyle
(z-\alpha)\hat f(z)-\sum_{k=1}^m a_k \log(z-\alpha_k), & {\rm if\;} G \;{\rm is\; bounded}, \\
\displaystyle
\hat f(z)-\sum_{k=1}^m a_k \log(z-\alpha_k), & {\rm if\;} G \;{\rm is\; unbounded},
\end{cases}
\end{equation}
For unbounded $G$, the function $\hat F(z)$ can be written as
\[
\hat F(z)
=\hat f(z)-\sum_{k=1}^m a_k[\log z+\log(1-\alpha_k/z)]
=\hat f(z)-\log z\sum_{k=1}^m a_k-\sum_{k=1}^m a_k\log(1-\alpha_k/z).
\]
Since $\hat f(\infty)=0$ and $\sum_{k=1}^ma_k=0$, we have $\hat F(\infty)=0$. Thus, the function $\hat F(z)$ is analytic in $G$ for both cases of bounded and unbounded $G$ but it is not necessarily single valued.
In view of~\eqref{eq:A}, the boundary values of the function $\hat F$ satisfy
\[
\Re\left[e^{-\i\theta(t)}\hat F(\eta(t))\right]=\Re\left[A(t)\hat f(\eta(t))\right]-\sum_{k=1}^ma_k\Re\left[e^{-\i\theta(t)}\log(\eta(t)-\alpha_k)\right].
\]
Then by~\eqref{eq:sum-gam-k} and~\eqref{eq:hf2-c1}, we have
\[
\Re\left[e^{-\i\theta(t)}\hat F(\eta(t))\right]=\nu(t)-c\cos\theta(t),
\]
which, in view of~\eqref{eq:theta}, implies that
\begin{subequations}\label{eq:hF-Re-Im}
\begin{equation}\label{eq:hF-Re-Im-1}
\Re\left[\hat F(\eta(t))\right]=-c \quad{\rm for}\quad \eta(t)\in\Gamma_k,\; k=1,2,\ldots,m,
\end{equation}
and
\begin{equation}\label{eq:hF-Re-Im-2}
\Im\left[\hat F(\eta(t))\right]=\nu_k \quad{\rm for}\quad \eta(t)\in\Gamma_k,\; k=m+1,m+2,\ldots,m+\ell.
\end{equation}
\end{subequations}
By differentiation both sides of~\eqref{eq:hF-Re-Im-2} with respect to the parameter $t$, we obtain
\begin{equation}\label{eq:hF-Re-Im-3}
\Im\left[\eta'(t)\hat F'(\eta(t))\right]=0 \quad{\rm for}\quad \eta(t)\in\Gamma_k,\; k=m+1,m+2,\ldots,m+\ell.
\end{equation}
Let the real function $u$ be defined for $z\in G\cup\partial G$ by
\[
u(z)=\Re \hat F(z).
\]
Then $u$ is harmonic in $G$. In view of~\eqref{eq:0-par-u}, we have
\begin{equation}\label{eq:0-par-u2}
\frac{\partial u}{\partial\bn}=\Re\left[\frac{-\i\eta'(t)}{|\eta'(t)|}\hat F'(\eta(t))\right]
=\frac{1}{|\eta'(t)|}\Im\left[\eta'(t)\hat F'(\eta(t))\right].
\end{equation}
Thus, by~\eqref{eq:hF-Re-Im-1}, \eqref{eq:hF-Re-Im-3}, and~\eqref{eq:0-par-u2}, the boundary values of $u$ satisfy the mixed-boundary condition
\begin{eqnarray*}
u(\zeta)&=&-c, \quad \zeta\in\Gamma_k, \quad k=1,2,\ldots,m, \\
\frac{\partial u}{\partial\bn}(\zeta)&=&0, \quad \zeta\in \Gamma_k, \quad k=m+1,m+2,\ldots,m+\ell.
\end{eqnarray*}
Since the above mixed boundary value problem has a unique solution, it is clear that the unique solution is the constant function $u(z)=-c$ for all $z\in G\cup\partial G$.
Thus the real part of $\hat F$ is constant for $z\in G$, and hence, by the Cauchy-Riemann equations, $\hat F$ is constant in $G$, say equal to $C$. This implies that $\hat F(z)=0$ for all $z\in G$ when $G$ is unbounded since $\hat F(\infty)=0$.
Then, for all $z\in G$, it follows from~\eqref{eq:hF-c1} that
\[
\sum_{k=1}^m a_k \log(z-\alpha_k) = \begin{cases}
-C+(z-\alpha)\hat f(z), & {\rm if\;} G \;{\rm is\; bounded}, \\
\hat f(z), & {\rm if\;} G \;{\rm is\; unbounded},
\end{cases}
\]
which implies that that $a_1=a_2=\cdots=a_m=0$ since the functions on the right-hand side are single-valued and the function on the left-hand side is multi-valued.
Thus, for bounded $G$, we have $(z-\alpha)\hat f(z)=C$ for all $z\in G$. By substituting $z=\alpha$, we find $C=0$ and hence $\hat F(z)=0$ for all $z\in G\cup\partial G$. 
Thus for both cases of bounded and unbounded $G$, we have $F(z)=0$ for all $z\in G\cup\partial G$. Hence, it follows from~\eqref{eq:hF-Re-Im} that $c=0$ and $\nu_1=\nu_2=\cdots=\nu_\ell=0$. Thus, the homogeneous linear system has only the trivial solution $[a_1,\ldots,a_{m},c,\nu_1,\ldots,\nu_\ell]^T={\bf 0}$, and hence the matrix of the linear system~\eqref{eq:sys-c1} is non-singular.
\end{proof}

\subsection{Case II: $m'=m-1$}

For this case, $G$ is a bounded multiply connected domain of connectivity $m+\ell$ with $m\ge2$ and $B$ is an unbounded multiply connected domain of connectivity $\ell'=\ell\ge0$ (where $B=\CC$ for $\ell=0$). Here, the simply connected domains $G_1,\ldots,G_{m-1}$ are bounded, the simply connected domain $G_m$ is unbounded, and $\Gamma_m$ is the external boundary component of $G$ (see Figure~\ref{fig:gc-c2}). Further, $A$ is given by the second formula in~\eqref{eq:A} and the functions $\gamma_k$ for $k=1,2,\ldots,m-1$ are given by the first formula in~\eqref{eq:gam-k}. 
For this case, the values of the unknown real constants  $a_1,\ldots,a_{m-1},c,\nu_1,\ldots,\nu_\ell$  can be computed as in the following theorem. Then $a_m$ is computed through~\eqref{eq:am-b}.

\begin{figure}[ht] %
\centerline{
\scalebox{0.5}{\includegraphics[trim=0 0 0 0,clip]{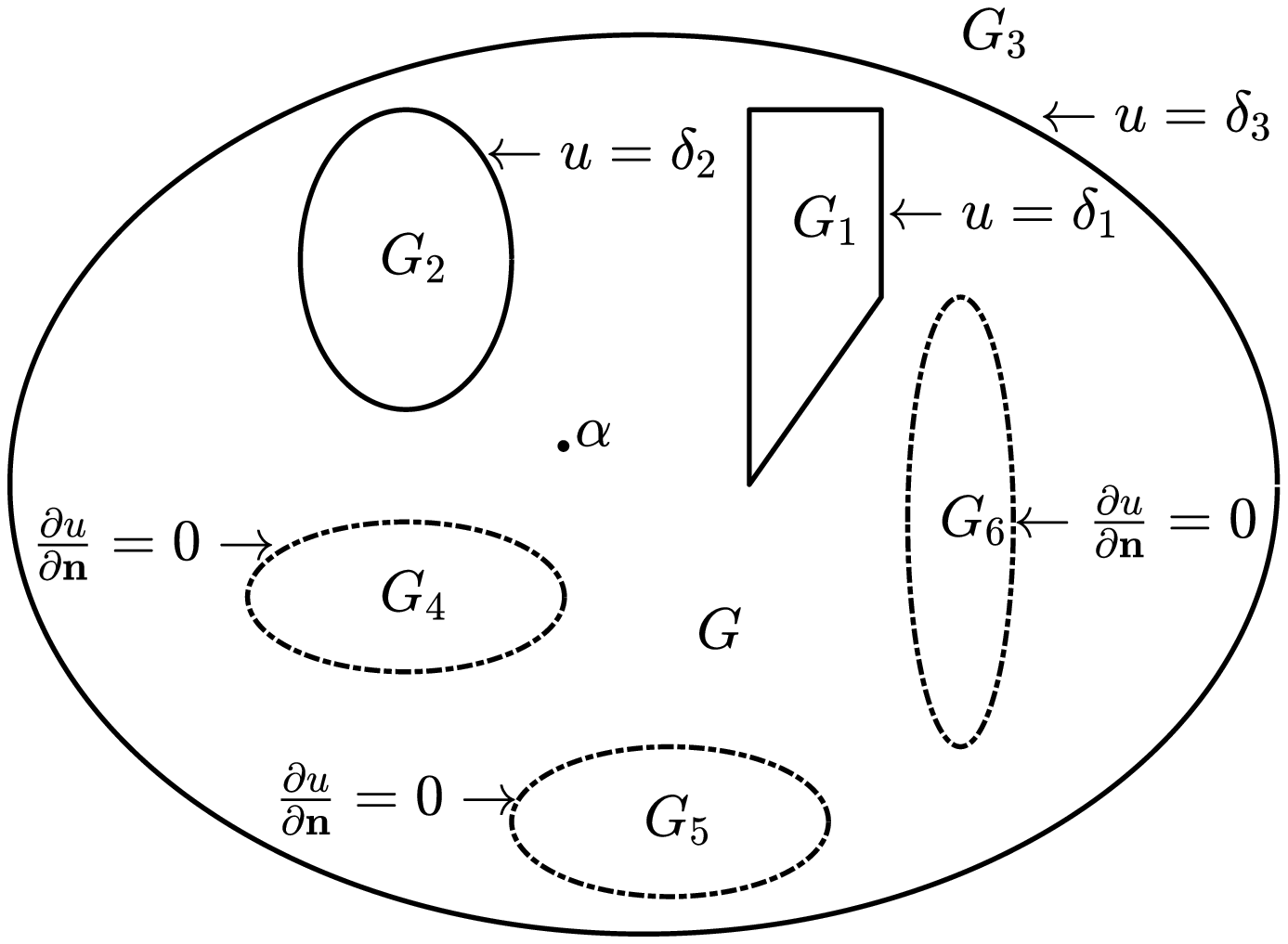}}
}
\caption{An example of a bounded field of the condenser $G$ for $m=3$ and $\ell=3$ for case II ($m'=m-1$, $\ell'=\ell$).}
\label{fig:gc-c2}
\end{figure}

\begin{thm}\label{thm:c2}
For each $k=1,2,\ldots,m-1$, let the function $\gamma_k$ be defined by~\eqref{eq:gam-k}, let $\mu_k$ be the unique solution of the integral equation~\eqref{eq:ie-k}, and let the piecewise constant function $h_k=(h_{1,k},h_{2,k},\ldots,h_{m+\ell,k})$ be given by~\eqref{eq:h-k}.
Then, the boundary values of the function $f$ in~\eqref{eq:rhp-f} are given by
\begin{equation}\label{eq:Af-casII}
A(t)f(\eta(t))=\sum_{k=1}^{m-1}a_k[\gamma_k(t)+h_k(t)+\i\mu_k(t)]
\end{equation}
and the $m+\ell$ unknown real constants $a_1,\ldots,a_{m-1},c,\nu_1,\ldots,\nu_\ell$ are the unique solution of the linear system
\begin{equation}\label{eq:sys-c2}
\left[\begin{array}{ccc:c:ccc}
h_{1,1}      &\cdots &h_{1,m-1}      &1      &          &         & \\
\vdots       &\ddots &\vdots       &\vdots &          &\bigzero &  \\
h_{m,1}      &\cdots &h_{m,m-1}      &1      &          &         &  \\ \cdashline{4-7}
h_{m+1,1}    &\cdots &h_{m+1,m-1}    &0      &-1        &         &\bigzeroU  \\
\vdots       &\ddots &\vdots       &\vdots &          &\ddots   &  \\
h_{m+\ell,1} &\cdots &h_{m+\ell,m-1} &0      &\bigzeroL &         &-1 \\
\end{array}\right]
\left[\begin{array}{c}
a_1    \\ \vdots \\ a_{m-1} \\ \hdashline c \\ \hdashline \nu_1 \\ \vdots \\ \nu_\ell
\end{array}\right]
= \left[\begin{array}{c}
\delta_1 \\  \vdots \\ \delta_m \\  0  \\ \vdots \\ 0
\end{array}\right].
\end{equation}
\end{thm}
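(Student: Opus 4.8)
The plan is to follow the template of the proof of Theorem~\ref{thm:c1}, making the modifications forced by the fact that now $m'=m-1$ rather than $m'=m$, so that the sum in the representation of $f$ runs only to $m-1$ and the constant $a_m$ is not a free unknown but is determined by~\eqref{eq:am-b}. First I would define $\hat f(z)=\sum_{k=1}^{m-1}a_kf_k(z)$ with $f_k$ as in Theorem~\ref{thm:ie}, note that $\hat f$ is analytic in $G$ (here $G$ is bounded so there is no condition at $\infty$), and compute its boundary values from~\eqref{eq:fet-k}: $\Re[A(t)\hat f(\eta(t))]=\sum_{k=1}^{m-1}a_k\gamma_k(t)+\sum_{k=1}^{m-1}a_kh_k(t)$. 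Setting $\Psi=\hat f-f$ and subtracting this from~\eqref{eq:rhp-f} (in which the sum now also runs to $m'=m-1$) gives $\Re[A(t)\Psi(\eta(t))]=\sum_{k=1}^{m-1}a_kh_k(t)+c\cos\theta(t)-\delta(t)-\nu(t)$, a piecewise constant function, so Lemma~\ref{lem:solv} forces $\Psi\equiv0$; this yields~\eqref{eq:Af-casII} and, since the right-hand side above also vanishes, the scalar relations $\sum_{k=1}^{m-1}a_kh_k+c\cos\theta(t)-\nu(t)=\delta(t)$. Reading off the value on each $J_j$ and using $\cos\theta=1$ on $J_1,\dots,J_m$ and $\cos\theta=0$ on $J_{m+1},\dots,J_{m+\ell}$ produces exactly the $m+\ell$ equations of the linear system~\eqref{eq:sys-c2}. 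Note that, unlike in Theorem~\ref{thm:c1}, there is here no extra normalization row $\sum a_k=0$: that row was needed in Case~I precisely to pin down the $m$-th unknown $a_m$, whereas here $a_m$ has already been eliminated via~\eqref{eq:am-b}, so the matrix is square of size $m+\ell$.

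For the uniqueness (nonsingularity) part I would again argue as in Theorem~\ref{thm:c1}: take a solution $[a_1,\dots,a_{m-1},c,\nu_1,\dots,\nu_\ell]^T$ of the homogeneous system, so $\sum_{k=1}^{m-1}a_kh_k+c\cos\theta(t)-\nu(t)=0$, form $\hat f=\sum_{k=1}^{m-1}a_kf_k$, and then build the (generally multivalued) analytic function
\[
\hat F(z)=(z-\alpha)\hat f(z)-\sum_{k=1}^{m-1}a_k\log(z-\alpha_k)
\]
appropriate to bounded $G$. Using~\eqref{eq:A}, \eqref{eq:sum-gam-k} (with $m'=m-1$, first formula for $\gamma_k$), and the homogeneous relation, the boundary values satisfy $\Re[e^{-\i\theta(t)}\hat F(\eta(t))]=\nu(t)-c\cos\theta(t)$, i.e.\ $\Re\hat F=-c$ on $\Gamma_1,\dots,\Gamma_m$ and $\Im\hat F=\nu_k$ on $\Gamma_{m+k}$; differentiating the latter gives $\Im[\eta'(t)\hat F'(\eta(t))]=0$, hence $\partial u/\partial\bn=0$ there for $u=\Re\hat F$. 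So $u$ solves the mixed boundary value problem~\eqref{eq:mix-bd} with data $-c$ on all the $\Gamma_k$, $k\le m$, and zero Neumann data on the rest; by uniqueness of that problem $u\equiv-c$, hence $\hat F$ is a constant $C$ by Cauchy--Riemann. Then $\sum_{k=1}^{m-1}a_k\log(z-\alpha_k)=(z-\alpha)\hat f(z)-C$ has a single-valued right-hand side and a multivalued left-hand side, forcing $a_1=\cdots=a_{m-1}=0$; then $(z-\alpha)\hat f(z)=C$ and substituting $z=\alpha$ gives $C=0$, so $\hat F\equiv0$, and reading off the boundary values gives $c=0$ and $\nu_1=\cdots=\nu_\ell=0$. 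Hence only the trivial solution exists and the matrix of~\eqref{eq:sys-c2} is nonsingular.

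The one genuinely new point — and the place to be careful — is the status of $a_m$ and of the $\log(z-\alpha_m)$ term. In Case~II the domain $G_m$ (the exterior of $\Gamma_m$) is unbounded, so $\alpha_m$ is not defined and the term $a_m\log(z-\alpha_m)$ does not appear in~\eqref{eq:F-u2}; nevertheless $a_m$ is meaningful through~\eqref{eq:am} and satisfies~\eqref{eq:am-b}. The subtlety is that in the homogeneous argument the function $\hat F$ is built only from $\log(z-\alpha_k)$, $k=1,\dots,m-1$, so one must check that $a_1=\cdots=a_{m-1}=0$ is the only conclusion that the multivaluedness can force, and that once these vanish, the associated $a_m=-\sum_{k=1}^{m-1}a_k$ automatically vanishes too — so there is no missing equation. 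I expect this bookkeeping of which logarithmic poles are present, together with verifying that the boundary-condition reading-off produces precisely the $m$ Dirichlet rows and $\ell$ Neumann rows of~\eqref{eq:sys-c2} with no spurious normalization row, to be the main (though still routine) obstacle; the analytic core is identical to that of Theorem~\ref{thm:c1} and relies on Lemma~\ref{lem:solv} and the uniqueness of the mixed boundary value problem~\eqref{eq:mix-bd}.
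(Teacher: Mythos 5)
Your proposal is correct and follows exactly the route the paper intends: the paper's own proof of this theorem is the single sentence that it ``can be proved by the same argument as in the proof of Theorem~\ref{thm:c1}'', and your adaptation (sum to $m-1$, bounded-$G$ forms of $A$ and $\hat F$, no normalization row since $a_m$ is recovered from~\eqref{eq:am-b}) supplies precisely the bookkeeping that sentence leaves implicit.
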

\begin{proof}
The theorem can be proved by the same argument as in the proof of Theorem~\ref{thm:c1}.
\end{proof}

\subsection{Computing the capacity $\capa(C)$ and the potential function $u$}

By solving the integral equations~\eqref{eq:ie-k} and then solving the linear system~\eqref{eq:sys-c1} (or \eqref{eq:sys-c2}), we obtain the real constants $a_1,\ldots,a_{m}$. Then, we can compute the capacity $\capa(C)$ from~\eqref{eq:cap-k}. We can also compute the boundary values of the auxiliary analytic function $f(z)$ through~\eqref{eq:Af-casI} or~\eqref{eq:Af-casII}. Then the values of $f(z)$ at interior points $z\in G$ can be computed by Cauchy's integral formula. Since $u(z)=\Re[F(z)]$, it follows from~\eqref{eq:F-u2} and~\eqref{eq:f-g} that the function $u(z)$ is given for $z\in G$ by
\begin{equation}\label{eq:u(z)}
u(z) = \begin{cases}
\displaystyle
c+\Re[(z-\alpha)f(z)]-\sum_{k=1}^{m'} a_k \log|z-\alpha_k|, & {\rm if\;} G \;{\rm is\; bounded}, \\
\displaystyle
c+\Re[f(z)]-\sum_{k=1}^{m'} a_k \log|z-\alpha_k|, & {\rm if\;} G \;{\rm is\; unbounded},
\end{cases}
\end{equation}

\subsection{Outline of the algorithm}
\label{sc:algorithm}

The method presented in this section for computing the capacity $\capa(C)$ and the potential function $u$ can be summarized in the following algorithm. Steps 10--12 are needed only if it is required to compute the values of the potential function.

\begin{algorithm}
\label{alg}
{\rm(Computing the capacity $\capa(C)$ and the potential function $u$).}

\begin{itemize}
	\item[1.] Parametrize the boundary components $\Gamma_j$ by $\eta_j(t)$, $t\in[0,2\pi]$, for $j=1,2,\ldots,m+\ell$, where $\Gamma_j$ for $j=1,2,\ldots,m$ are the boundaries of the plates $E_j$ of the condenser and $\Gamma_j$ for $j=m+1,m+2,\ldots,m$ are the boundary components of the domain $B$.
	\item[2.] If $G$ is bounded and $B$ is unbounded, then we define $m'=m-1$ and $\ell'=\ell$. For this case, the plates $E_1,\ldots,E_{m-1}$ are bounded, the plate $E_m$ is unbounded and $\Gamma_m$ is the external boundary component of $G$.
	\item[3.] If both domains $B$ and $G$ are bounded, then we define $m'=m$ and $\ell'=\ell-1$. For this case, the plates $E_1,\ldots,E_{m}$ are bounded and $\Gamma_{m+\ell}$ is the external boundary component of $G$.
	\item[4.] If both domains $B$ and $G$ are unbounded, then we define $m'=m$ and $\ell'=\ell$. For this case, the plates $E_1,\ldots,E_{m}$ are bounded.
	\item[5.] Define the functions $A$ by~\eqref{eq:A}.
	\item[6.] Define the functions $\gamma_k$ for $k=1,2,\ldots,m'$ by~\eqref{eq:gam-k}.
	\item[7.] For $k=1,2,\ldots,m'$, compute the function $\mu_k$ by solving the integral equation~\eqref{eq:ie-k} and compute the function $h_k$ through~\eqref{eq:h-k}.
	\item[8.] Compute the $m+\ell+1$ real constants $a_1,\ldots,a_{m},c,\nu_1,\ldots,\nu_\ell$ by solving one of the linear system~\eqref{eq:sys-c1} or~\eqref{eq:sys-c2}. For $m'=m-1$, $a_m$ is computed through~\eqref{eq:am-b}.
	\item[9.] Compute the capacity $\capa(C)$ from~\eqref{eq:cap-k}.
	\item[10.] Compute the boundary values of the analytic function $f$ through~\eqref{eq:Af-casI} or~\eqref{eq:Af-casII}.
	\item[11.] Compute the values of $f(z)$ for $z\in G$ by the Cauchy integral formula.
	\item[12.] Compute the values of the potential function $u$ by~\eqref{eq:u(z)}.
\end{itemize}
\end{algorithm}

\section{Numerical implementation of the algorithm}
\label{sc:num}

The main steps in the Algorithm~\ref{alg} are steps 7 and 8. In step 8, the size of the linear system is usually quite small and hence we solve it using MATLAB ``backslash'' operator. For step 7, the $m'$ integral equation~\eqref{eq:ie-k} are solved using the MATLAB function \verb|fbie| from~\cite{Nas-ETNA}. 
In the function \verb|fbie|, the integral equations~(\ref{eq:ie-k}) is discretized by the Nystr\"om method with the trapezoidal rule~\cite{Atk97,Tre-Trap}. The size of the obtained linear system is usually large. So, in the function \verb|fbie|, the linear system is solved iteratively using the MATLAB function $\mathtt{gmres}$. The matrix-vector multiplication in $\mathtt{gmres}$ is computed in a fast and efficiently way using the MATLAB function $\mathtt{zfmm2dpart}$ from the toolbox $\mathtt{FMMLIB2D}$~\cite{Gre-Gim12}. The function \verb|fbie| computes also the $m'$ piecewise constant functions $h_k$ in~\eqref{eq:h-k}.

For domains with smooth boundaries, we use the trapezoidal rule with equidistant nodes. 
We discretize each interval $J_k=[0,2\pi]$, for $k=1,2,\ldots,m+\ell$, by $n$ equidistant nodes $s_1, \ldots, s_n$ where
\begin{equation}\label{eq:s_i}
s_k = (k-1) \frac{2 \pi}{n}, \quad k = 1, \ldots, n,
\end{equation}
and $n$ is an even integer. We write $\bs=[s_1,\ldots, s_n]$. Then, we discretize the parameter domain $J$ by the $m+\ell$ copies of $\bs$, 
\begin{equation}\label{eq:bt}
\bt = [\bs, \bs, \ldots, \bs]^T.
\end{equation}
This leads to the discretizations
\begin{equation}\label{eq:dis_s}
\eta(\bt) = [\eta_1(\bs), \eta_2(\bs), \ldots, \eta_{m+\ell}(\bs)]^T, \quad
\eta'(\bt), \quad A(\bt), \quad
\gamma_k(\bt),\; k=1,2,\ldots,m'.
\end{equation}
In MATLAB, these discretized functions are stored in the vectors \texttt{et}, \texttt{etp}, \texttt{A}, \texttt{gamk}, respectively. 
Then the discretizations vectors \texttt{muk} and \texttt{hk} of the functions $\mu_k$ and $h_k$ in~\eqref{eq:ie-k} and~\eqref{eq:h-k} are computed by calling
\begin{verbatim}
  [muk,hk] = fbie(et,etp,A,gamk,n,iprec,restart,tol,maxit).
\end{verbatim}
In the numerical experiments in the next sections, we choose 
$\mathtt{iprec}=5$ (the tolerance of the FMM is $0.5\times 10^{-15}$),  
{\tt restart=[\,]} (GMRES is used without restart), 
{\tt tol=1e-14} (the tolerance of the GMRES method is $10^{-14}$),
and {\tt maxit=100} (the maximum number of GMRES iterations is $100$).
The values $h_{j,k}$ are then computed by taking arithmetic means:
\begin{equation*}
h_{j,k} = \frac{1}{n} \sum_{i=1+(j-1)n}^{jn} h_k(t_i), \quad j=1,2,\ldots,m+\ell, \quad k=1,2,\ldots,m'.
\end{equation*}
These values are used to build the linear system~\eqref{eq:sys-c1} or~\eqref{eq:sys-c2}.
Thus, the computational cost of the overall method for computing the capacity $\capa(C)$ is $O(m'(m+\ell)n\ln n)$ operations for step (7) and $O((m+\ell)^3)$ operations for step (8).

For fast and accurate computing of the Cauchy integral formula in step (11), we use the MATLAB function \verb|fcau| from~\cite{Nas-ETNA}. The function \verb|fcau| is based on using the MATLAB function \verb|zfmm2dpart| in~\cite{Gre-Gim12}. Using the function \verb|fcau|, the Cauchy integral formula can be computed at $p$ interior points in $O(p+(m+\ell)n)$ operations.

For domains with corners (excluding cusps), the trapezoidal rule with equidistant nodes yields only poor convergence and hence the trapezoidal rule with a graded mesh will be used~\cite{Kre90}. 
Equivalently, we can remove the discontinuity of the derivatives of the solution of the integral equation at the corner points by choosing an appropriate one-to-one function $\sigma:J\to J$. Then we parametrize the boundary $\Gamma$ by $\eta(t)=\hat\eta(\sigma(t))$ where $\hat\eta$ is any parametrization function of the boundary $\Gamma$ (see~\cite{Kre90,LSN17} for more details, the above function $\sigma$ is denoted by $\delta$ in~\cite{LSN17}).

The proposed method can be implemented in MATLAB as in the following function \verb|capgc.m|.

\begin{lstlisting}
function [cap , uz] = capgc(et,etp,alphav,deltav,m,mp,ell,alpha,z)
% Compute the capacity of the generalized condensers (B,E,delta)
% 
% Input:
% 1,2) et, etp: parametrization of the boundary and its first derivative
% 3) alphav=[alphav(1),...,alphav(mp)]: alphav(j) is an auxiliary point
% interior to the boundary component \Gamma_j
% 4) deltav=[deltav(1),...,deltav(m)]: deltav(j) is the value of the
% potential function u on \Gamma_j
% 5) m: the number of the closed sets E_k
% 6) mp: mp=m-1 if \Gamma_m is the external boundary component of G,
%    o.w., mp=m 
% 7) ell: the multiplicity of the domain B (B=C for ell=0)
% 8) alpha: for bounded G, alpha is an auxiliary point in G
%           for unbounded G, alpha=inf
% 9) z: a row vector of points in G (if it is required to compute u(z))
% 
% Output:
% cap (the capacity of the generalized condensers (C,E,delta)).
% uz (the values of the potential function u(z) if z is given).
%
% Computing the constants \h_{j,k} for j=1,2,...,m+ell and k=1,2,...,mp
ellp = ell ; ellp(abs(alpha)<inf & mp==m)=ell-1; 
n=length(et)/(m+ell); tht=zeros(size(et)); tht(m*n+1:end)=pi/2;
if mp==m & ellp==ell
    A=exp(-i.*tht);
else
    A=exp(-i.*tht).*(et-alpha);
end
for k=1:mp
    for j=1:m+ell
        jv = 1+(j-1)*n:j*n;
        if (ellp==ell)
            gamk{k}(jv,1)=real(exp(-i.*tht(jv)).*clog(et(jv)-alphav(k)));
        else
            gamk{k}(jv,1)=real(exp(-i.*tht(jv)).*...
                clog((et(jv)-alphav(k))./(et(jv)-alpha)));
        end
    end    
    [mu{k},h{k}]=fbie(et,etp,A,gamk{k},n,5,[],1e-14,100);
    for j=1:m+ell
        hjk(j,k)=mean(h{k}(1+(j-1)*n:j*n));
    end
end
% Computing the constants a_k  for k=1,2,...,m
mat=hjk; mat(1:m,mp+1)=1; mat(m+1:m+ell,mp+1)=0; 
mat(1:m,mp+2:mp+ell+1)=0; mat(m+1:m+ell,mp+2:mp+ell+1)=-eye(ell); 
rhs(1:m,1)=deltav; rhs(m+1:m+ell,1)=0; 
if mp==m
    mat(m+ell+1,1:m)=1; mat(m+ell+1,m+1:m+ell+1)=0; rhs(m+ell+1,1)=0;
end
x=mat\rhs; a=x(1:mp,1); c=x(mp+1);
if mp==m-1
    a(m,1)=-sum(a);
end
% Computing the capacity
cap  = (2*pi)*sum(deltav(:).*a(:));
% compute the values of the potential function u(z) if z is given
if nargin==9
    fet = zeros(size(et)); uz=zeros(size(z));
    for k=1:mp
        fet = fet+a(k).*(gamk{k}+h{k}+i.*mu{k})./A;
        uz=uz-a(k)*log(abs(z-alphav(k)));
    end
    if abs(alpha)<inf
        fz=fcau(et,etp,fet,z);
        uz=uz+c+real((z-alpha).*fz);
    else
        fz=fcau(et,etp,fet,z,n,0);
        uz=uz+c+real(fz);
    end
end
end
\end{lstlisting}

In this paper, computations were performed in MATLAB R2017a on an ASUS Laptop with Intel(R) Core(TM) i7-8750H CPU @2.20GHz, 2208 Mhz, 6 Core(s), 12 Logical Processor(s), and 16GB RAM. The computation times presented in this paper were measured with the MATLAB tic toc commands. All the computer codes of our computations are available in the internet link \url{https://github.com/mmsnasser/gc}.

\section{Numerical Examples - Regular Condensers}

In this section, we shall consider several numerical examples of regular condensers.
Some of these examples either have know capacity or have been considered in the literature. So, we can compare the obtained results with the exact capacity or with known capacity computed by other researchers. For such case, we have $\ell=0$ and $\{\delta_k\}_{k=1}^{m}$ containing exactly two different numbers which are $1$ and $0$.

\nonsec{\bf Two circles.}\label{ex:2cir}

In this example, we consider the generalized condenser $C=(B,E,\delta)$ with $B=\CC$ (and hence $\ell=0$), $E=\{E_1,E_2\}$ (and hence $m=2$), and $\delta=\{0,1\}$. The plates of the condenser are given by $E_k=\overline{G_k}$, $k=1,2$, where $G_1=\{z:|z|<1\}$ and $G_2=\{z:|z-a|<r\}$ for $r>0$ and a real number $a$ with $a>1+r$. So, for this example, the generalized condenser reduces to a regular condenser, $\ell'=\ell=0$, and $m'=m=2$. Thus, the field of the condenser, $G$ is the doubly connected domain in the exterior of the two circles $\Gamma_1=\{z\,:\,|z|=1\}$ and $\Gamma_2=\{z\,:\,|z-a|=r\}$ (see Figure~\ref{fig:2cir} (left) for $a=2$ and $r=0.5$). The exact value of conformal capacity is given by $\capa(G)=2\pi/\log(1/q)$ where $q$ is obtained by solving the following equation~\cite{vuo}
\[
\frac{(1+q)^2}{q}=\frac{(1+a-r)(a+r-1)}{r}.
\]

We use the method presented in Section~\ref{sc:num} with $n=2^{10}$ to compute approximate values for the capacity for $a=2$ and for several values of $r$ between $0.01$ and $0.99$. The relative errors for the computed values for this case are presented in Figure~\ref{fig:2cir}(right). The level curves of the function $u$ for $a=2$ and $r=0.5$ are shown in Figure~\ref{fig:2cir} (left).

\begin{figure}[ht] %
\centerline{
\scalebox{0.5}[0.5]{\includegraphics[trim=0 0cm 0cm 0cm,clip]{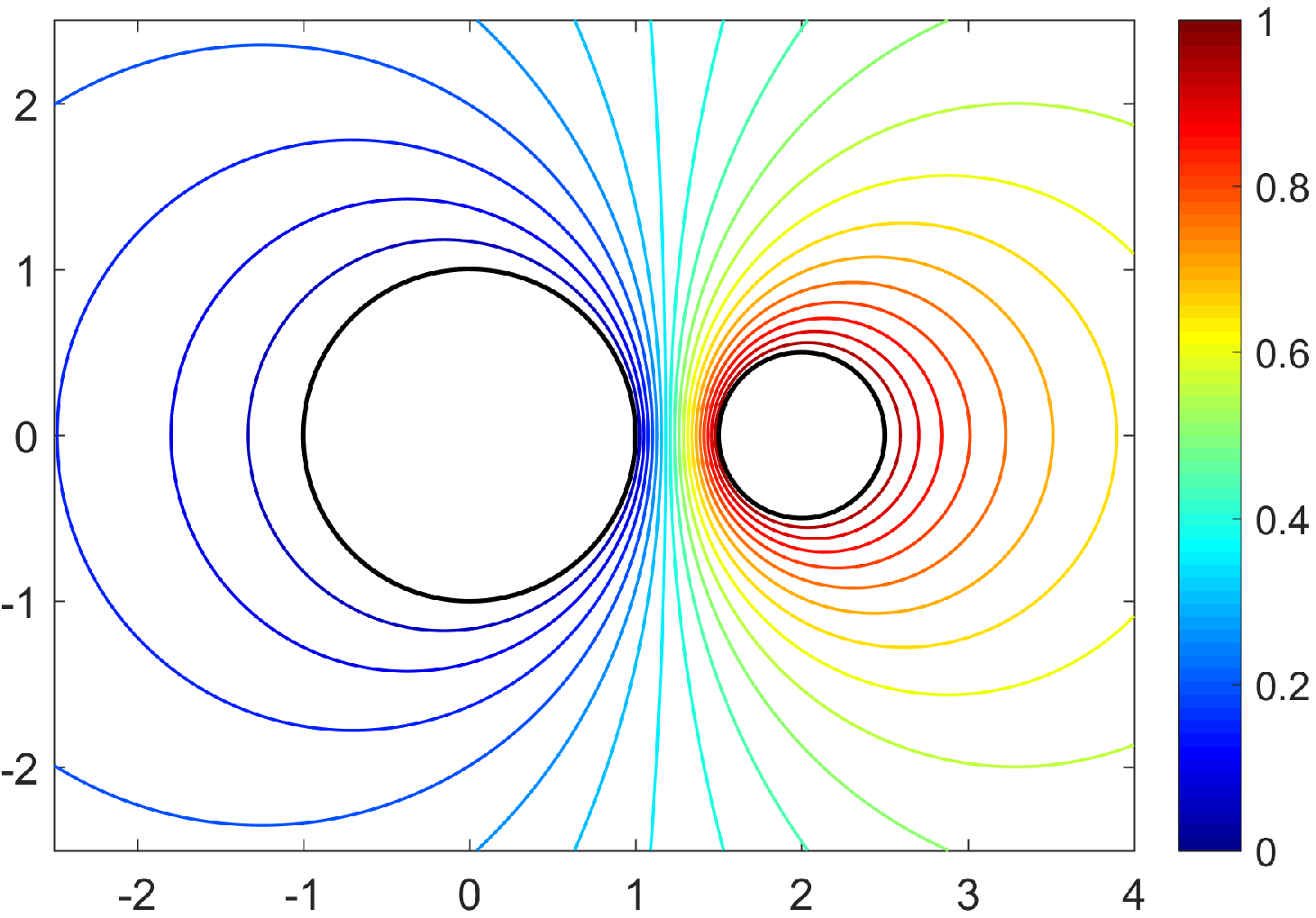}}
\hfill
\scalebox{0.5}[0.5]{\includegraphics[trim=0 0 0 0cm,clip]{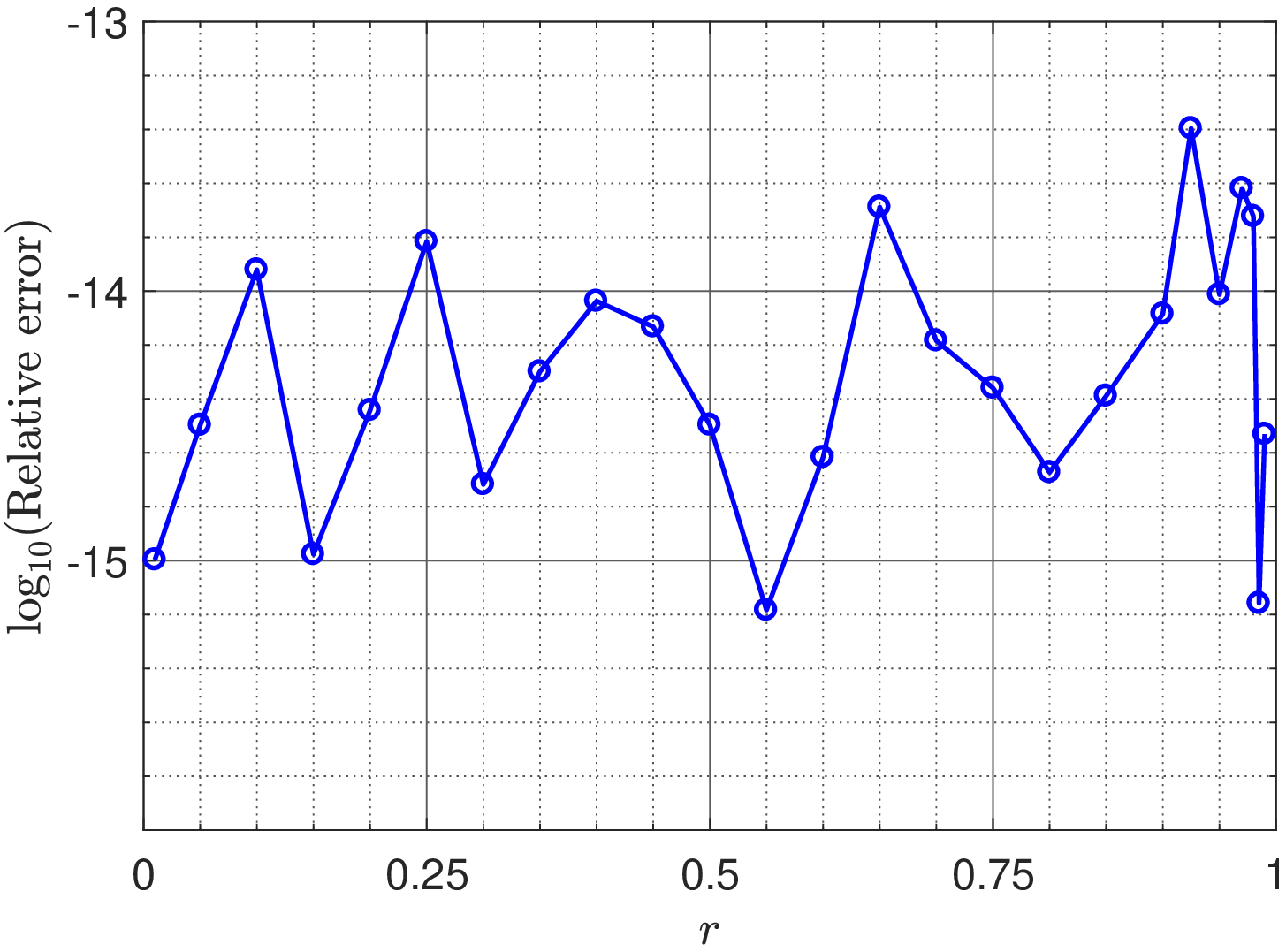}}
}
\caption{The field of the condenser and the level curves of the function $u$ for Example~\ref{ex:2cir} (left) and the relative errors in the computed values (right).}
\label{fig:2cir}
\end{figure}

\nonsec{\bf Square with two triangles.}\label{ex:2tri_squ}

In this example, we consider the generalized condenser $C=(B,E,\delta)$ with $B=\CC$, $E=\{E_1,E_2,E_3\}$ where $E_k=\overline{G_k}$, $k=1,2,3$, and $\delta=\{1,1,0\}$. Here, $G_{1}$ is the interior of the triangles with the vertices $\i a,-(b-a)/\sqrt{3}+\i b,(b-a)/\sqrt{3}+\i b$, $G_{2}$ is the interior of the triangles with the vertices $-\i a,(b-a)/\sqrt{3}-\i b,-(b-a)/\sqrt{3}-\i b$, and $G_3$ is the exterior of the square with the vertices $1+\i,-1+\i,-1-\i,1-\i$. So, $\ell'=\ell=0$, $m=3$, $m'=2$, and the generalized condenser reduces to a regular condenser. The field of the condenser, $G$, is then the bounded multiply connected domain in the exterior of the two triangles and in the interior of the square (see Figure~\ref{fig:2tri_squ}).

This example has been considered in~\cite[Example~7]{bsv} for several values of $a$ and $b$. We use the presented method with $n=3\times2^{13}$ to compute the capacity for the same values of $a$ and $b$ used in~\cite{bsv}. The obtained results as well as the results presented in~\cite{bsv} are shown in Table~\ref{tab:2tri_squ}. The level curves of the function $u$ for $a=0.2$ and $b=0.7$ are shown in Figure~\ref{fig:2tri_squ}.

\begin{figure}[ht] %
\centerline{
\scalebox{0.5}[0.5]{\includegraphics[trim=0 0cm 0cm 0cm,clip]{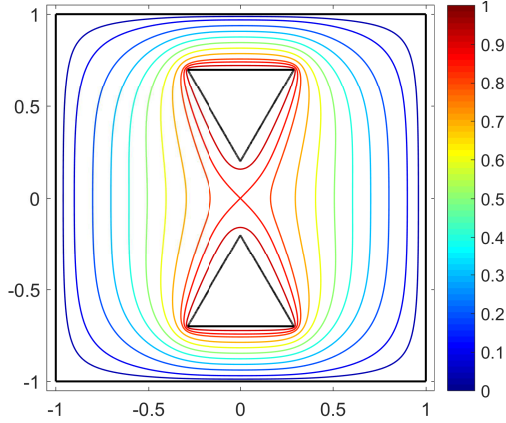}}
}
\caption{The field of the condenser and the level curves of the function $u$ for the condenser in Example~\ref{ex:2tri_squ}.}
\label{fig:2tri_squ}
\end{figure}

\begin{table}[ht]
\caption{The approximate values of the capacity $\capp(C)$ for Example~\ref{ex:2tri_squ}.}
\label{tab:2tri_squ}%
\vskip-0.5cm
\[
\begin{array}{l@{\hspace{1.00cm}}l@{\hspace{1.00cm}}l@{\hspace{1.00cm}}l} \hline %
a         & b     & \mbox{Our\;Method} & \mbox{\cite{bsv}}       \\ \hline %
0.1       & 0.3   & 3.93241437137267   & 3.9324143\\
0.2       & 0.4   & 4.41198623240832   & 4.4119861\\
0.2       & 0.7   & 9.49308124679268   & 9.4930811\\
0.3       & 0.8   & 12.1180118821912   & 12.1180117\\
0.3       & 0.9   & 21.6586490491066   & 21.6586487\\
\hline %
\end{array}
\]
\end{table}

\nonsec{\bf Cantor dust.}\label{ex:cant-d}

Cantor dust is a generalization of the classical Cantor middle third set to dimension two. 
Let $I_0=[0,1]$ and recursively define
\[
I_k = \frac{1}{3}I_{k-1}\cup\left(\frac{1}{3}I_{k-1}+\frac{2}{3}\right),
\quad k \ge 1.
\]
This means that $I_k$ is constructed by ``removing'' the middle one third of 
each interval $I_{k-1}$. For $k=0,1,2,\ldots$, the the closed set $I_k$ consists of $2^k$ closed intervals. Then, we define the closed sets $S_k$ as
\[
S_k=I_k\times I_k, \quad  k \ge 0,
\]
where $S_k$ consists of $4^k$ closed square regions, say $E_1,E_2,\ldots,E_{4^k}$ (see Figure~\ref{fig:cant-d-lc} for $k=1$ (left) and $k=2$ (right)). Then the Cantor dust is defined as 
\[
S = \bigcap_{k=1}^{\infty} S_k.
\]

For $k=0,1,2,\ldots$, we consider the generalized condensers $C_k=(B,E,\delta)$ with $B=\CC$ and $E=\{E_1,E_2,\ldots,E_{4^k}\}$, i.e., we have $4^k$ plates. For the levels of the potential function $\delta=\{\delta_j\}_{j=1}^{4^k}$, we assume $\delta_j=0$ for half of the plates (the plates below the line $y=0.5$) and $\delta_j=1$ for the other half (the plates above the line $y=0.5$). 
Thus, $\ell=0$, $m'=m=4^k$, and the generalized condenser reduces to a regular condenser. The field of the condenser, $G$, is then the unbounded multiply connected domain in the exterior of the closed sets $S_k$ (see Figure~\ref{fig:cant-d-lc}).

The approximate value of the capacity for $k=1,2,3,4,5$ are shown in Table~\ref{tab:cant-d} and the level curves of the function $u$ for $k=1,2$ are shown in Figure~\ref{fig:cant-d-lc}. For each $k$, the method requires solving $m'=4^k$ integral equations. The CPU time presented in Table~\ref{tab:cant-d} shows that the method can be used to compute the capacity $\capp(C_k)$ in reasonable time even when $m'$ becomes large. The presented method is used with $n=2^{9}$. 

\begin{table}[ht]
\caption{The approximate values of the capacity $\capp(C_k)$ for Example~\ref{ex:cant-d}.}
\label{tab:cant-d}%
\vskip-0.5cm
\[
\begin{array}{l@{\hspace{1.00cm}}l@{\hspace{1.00cm}}l@{\hspace{1.00cm}}l} \hline %
k    & m=4^k    & \capa(C_k)       & {\rm Time\;(sec)} \\ \hline %
1    & 4        & 4.652547172280   & 0.96 \\
2    & 16       & 4.562140107251   & 7.33\\
3    & 64       & 4.531267950053   & 87.23\\
4    & 256      & 4.519885740453   & 1312.67\\
5    & 1024     & 4.515629401820   & 19880.56 \\
\hline %
\end{array}
\]
\end{table}

\begin{figure}[ht] %
\centerline{
\scalebox{0.55}{\includegraphics[trim=0 0cm 0cm 0cm,clip]{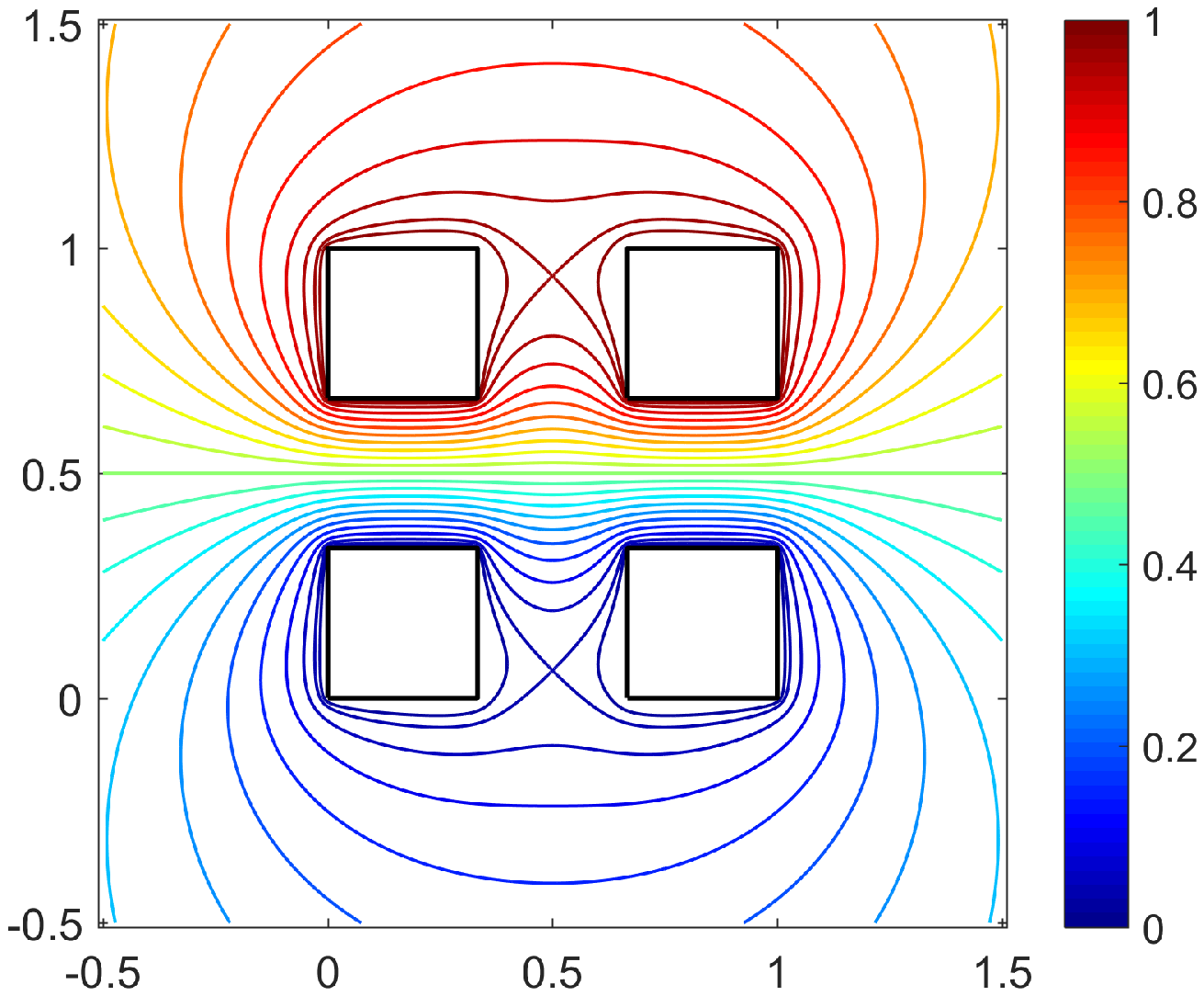}}
\hfill
\scalebox{0.55}{\includegraphics[trim=0 0cm 0cm 0cm,clip]{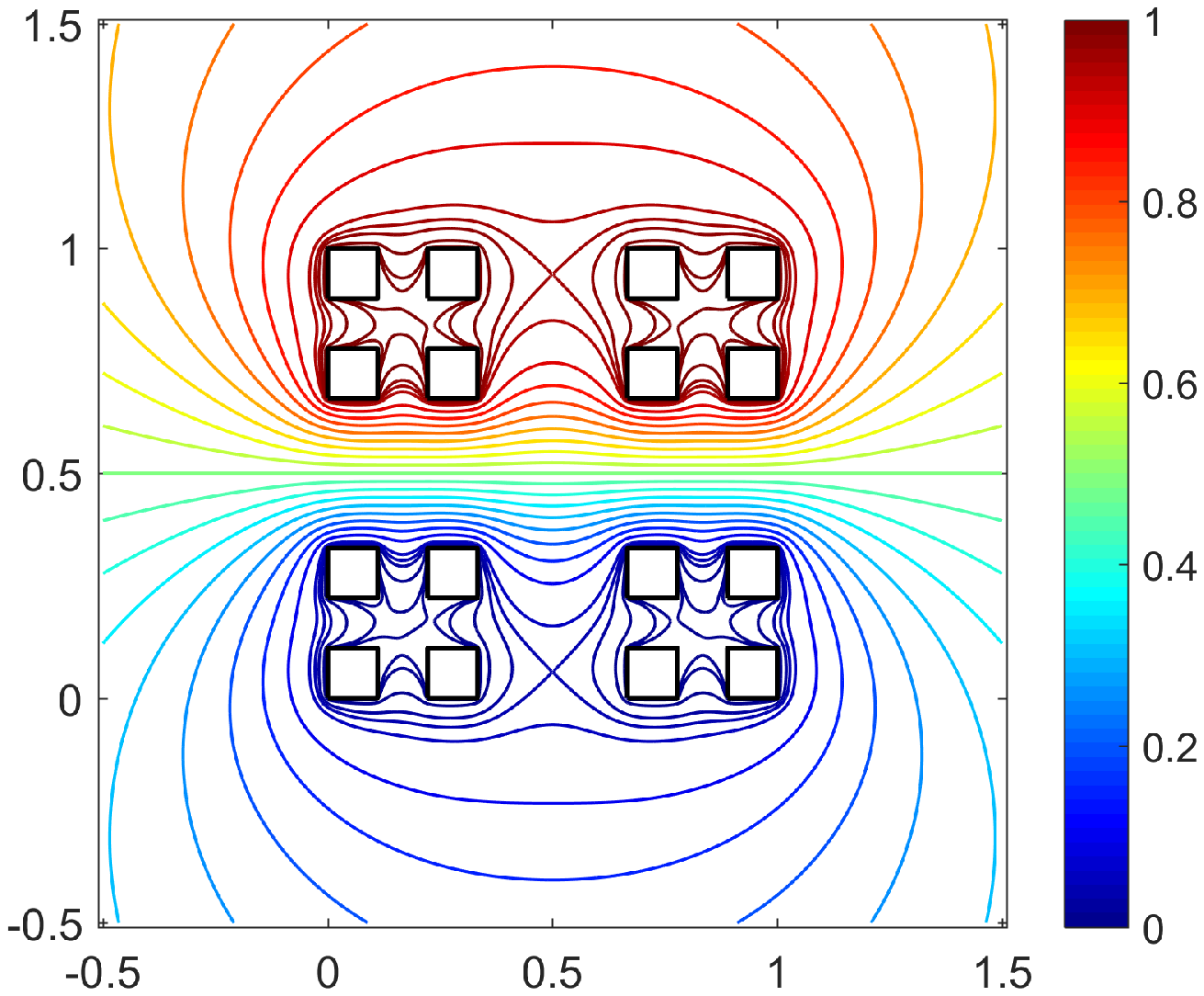}}
}
\caption{The level curves of the function $u$ for the condenser in Example~\ref{ex:cant-d} for $k=1$ (left) and $k=2$ (right).}
\label{fig:cant-d-lc}
\end{figure}

\nonsec{\bf Cantor dust in a circle.}\label{ex:cant-c}

In this example, we consider the generalized condensers $C_k=(B,E,\delta)$ with $B=\CC$, $E=\{E_1,E_2,\ldots,E_{4^k},E_{4^k+1}\}$ where $E_1,E_2,\ldots,E_{4^k}$ are as in Example~\ref{ex:cant-d} and $E_{4^k+1}=\{z\in\CC\,:\,|z-(0.5+0.5\i)|\ge1\}$. For the levels of the potential function, we assume $\delta=\{0,0,\ldots,0,1\}$, i.e., the boundary values of the potential function $u$ are $1$ on the circle $|z-(0.5+0.5\i)|=1$ and $0$ on the boundary of $S_k$, $k=0,1,2,\ldots$. Thus, $\ell'=\ell=0$, $m'=m-1=4^k$, and the generalized condenser reduces to a regular condenser. The field of the condenser, $G$, is then the bounded multiply connected domain in the exterior of the closed sets $S_k$ and in the interior of the circle $|z-(0.5+0.5\i)|=1$ (see Figure~\ref{fig:cant-c-lc}).

The approximate value of the capacity for $k=0,1,\ldots,5$ are shown in Table~\ref{tab:cant-c} and the level curves of the function $u$ for $k=1,2$ are shown in Figure~\ref{fig:cant-c-lc}. As in the previous example, the presented method is used with $n=2^{9}$. 

\begin{table}[ht]
\caption{The approximate values of the capacity $\capp(C_k)$ for Example~\ref{ex:cant-c}.}
\label{tab:cant-c}%
\vskip-0.5cm
\[
\begin{array}{l@{\hspace{1.00cm}}l@{\hspace{1.00cm}}l@{\hspace{1.00cm}}l} \hline %
k    & m'=4^k     & \capp(C_k)        & {\rm Time\;(sec)} \\ \hline %
0    & 1          & 11.953050425798967   & 0.18 \\
1    & 4          & 11.598538784854115   & 1.39 \\
2    & 16         & 11.460679479701366   & 9.49\\
3    & 64         & 11.408998221761493   & 94.22\\
4    & 256        & 11.389646177509054   & 1235.45\\
5    & 1024       & 11.382387009959178   & 19160.17  \\
\hline %
\end{array}
\]
\end{table}

\begin{figure}[ht] %
\centerline{
\scalebox{0.55}{\includegraphics[trim=0 0cm 0cm 0cm,clip]{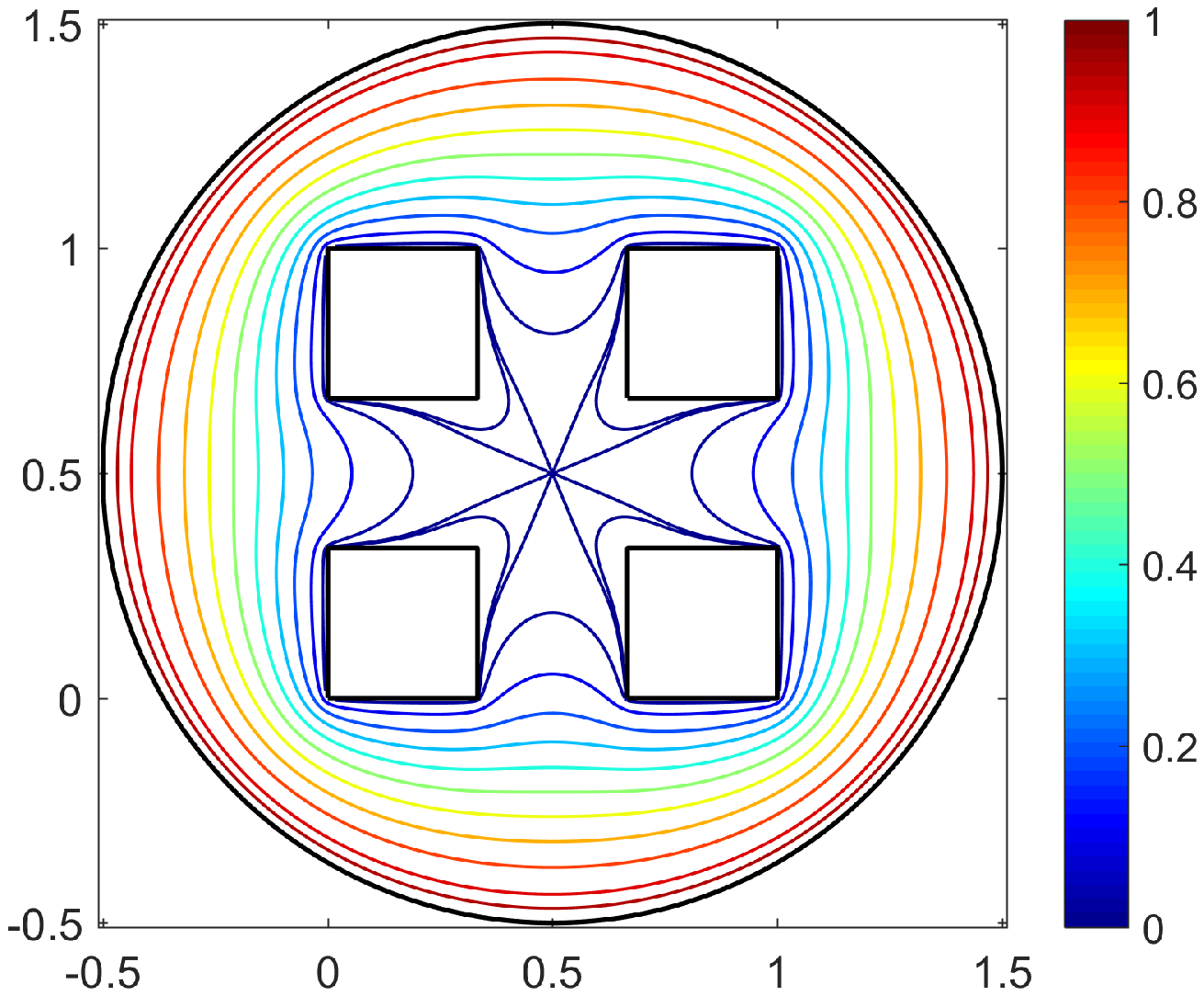}}
\hfill
\scalebox{0.55}{\includegraphics[trim=0 0cm 0cm 0cm,clip]{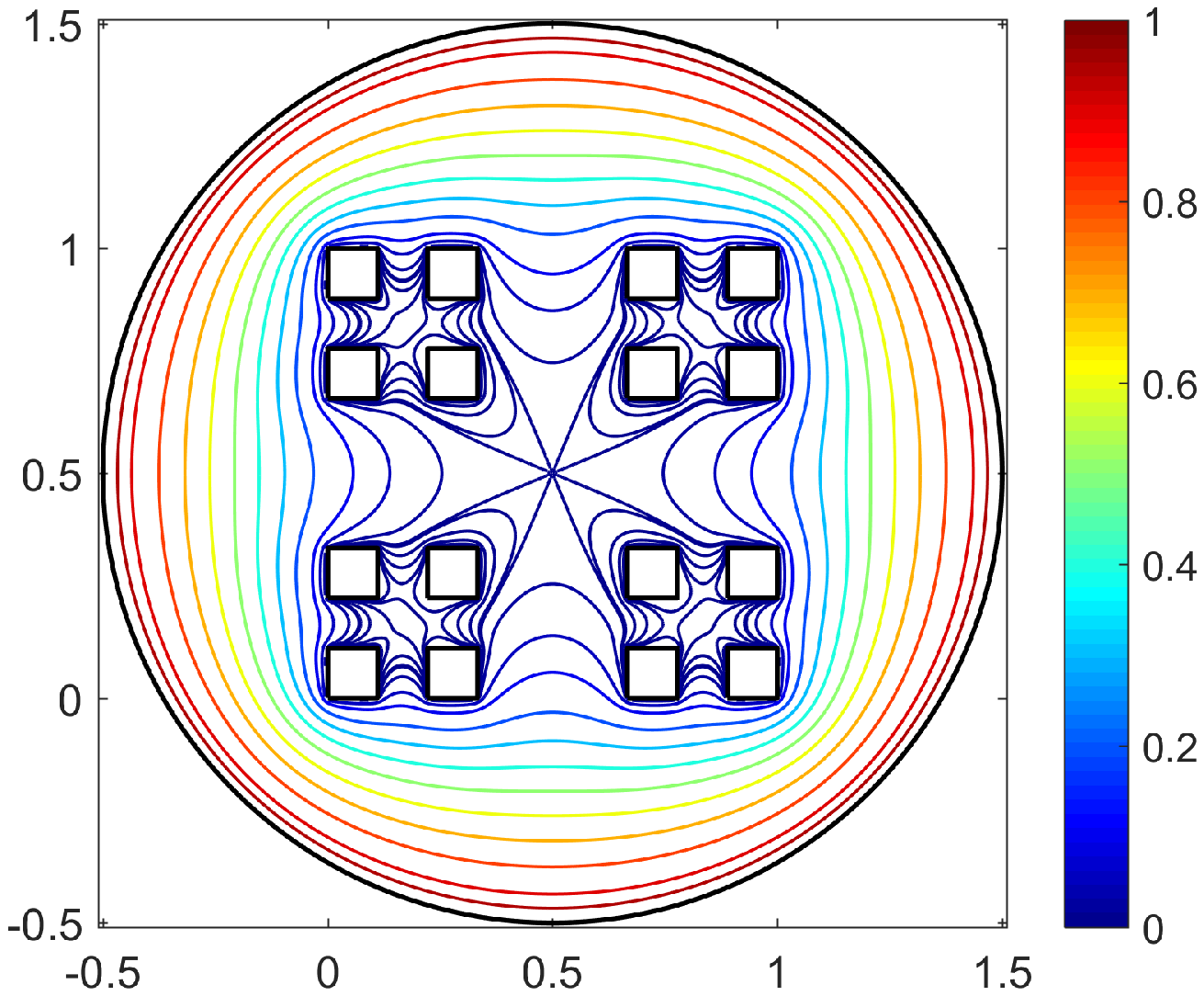}}
}
\caption{The level curves of the function $u$ for the condenser in Example~\ref{ex:cant-c} for $k=1$ (left) and $k=2$ (right).}
\label{fig:cant-c-lc}
\end{figure}

\section{Numerical Examples - Generalized Condensers}

In this section, we shall consider several numerical examples of generalized condensers. For such case, we have either $\ell\ne0$ or $\ell=0$ with $\{\delta_k\}_{k=1}^{m}$ containing at least three different numbers.

\nonsec{\bf Six circles.}\label{ex:6cir}

In this example, we assume that $E=\{E_1,E_2\}$ where $E_1$ and $E_2$ are as in Example~\ref{ex:2cir} with $a=2$, i.e., $E_1=\overline{G_1}$ with $G_1=\{z:|z|<1\}$, and $E_2=\overline{G_2}$ with $G_2=\{z:|z-2|<r\}$ where $0<r<1$ (and hence $m'=m=2$). We consider the generalized condenser $C=(B,E,\delta)$ where $\delta=\{0,\delta_2\}$ with a non-zero real number $\delta_2$ for two cases of the domain $B$.

First, we assume that $B$ is the bounded multiply connected domain
\[
B=B_I=\{z:|z|<3,\quad |z+2|>0.9, \quad |z\mp2\i|>0.9\}.
\]
Hence $\ell=4$ and $\ell'=3$. The field of the condenser, $G$, is then the bounded multiply connected domain of connectivity $6$ exterior to the circles $\Gamma_1=\{z\,:\,|z|=1\}$, $\Gamma_2=\{z\,:\,|z-2|=r\}$, $\Gamma_2=\{z\,:\,|z-2|=r\}$, $L_{1,2}=\{z\,:\,|z\mp2i|=0.9\}$, $L_3=\{z\,:\,|z+2|=0.9\}$, and interior to the circles $L_4=\{z\,:\,|z|=3\}$ (see Figure~\ref{fig:6cir} (left) for $r=0.5$).

Second, we assume that $B$ is the unbounded multiply connected domain
\[
B=B_{II}=\{z:|z-6|>3,\quad |z+2|>0.9, \quad |z-(1\pm3\i)|>2\}.
\]
and hence $\ell'=\ell=4$. Thus, $G$ is the unbounded multiply connected domain of connectivity $6$ exterior to the circles $\Gamma_1=\{z\,:\,|z|=1\}$, $\Gamma_2=\{z\,:\,|z-2|=r\}$, $L_{1,2}=\{z\,:\,|z-(1\pm3\i)|=2\}$, $L_3=\{z\,:\,|z+2|=0.9\}$, and $L_4=\{z\,:\,|z-6|=3\}$ (see Figure~\ref{fig:6cir} (center) for $r=0.5$).

As in Example~\ref{ex:2cir}, we use the presented method with $n=2^{10}$. The approximate values of the capacity computed for $r=0.5$ and for several values of $\delta_2$ are presented in Table~\ref{tab:cir}. The level curves of the function $u$ for $r=0.5$ and $\delta_2=1$ are shown in Figure~\ref{fig:6cir} (left, center). Figure~\ref{fig:6cir} (right) shows the approximate values of the capacity computed for $\delta_2=1$ and for several values of $r$ between $0.01$ and $0.99$. We see from Table~\ref{tab:cir} and Figure~\ref{fig:6cir} (right) that the capacity of the condenser $C=(B,E,\delta)$ for $B=B_{I}$ and $B=B_{II}$ is less than the capacity for $B=\CC$ (Example~\ref{ex:2cir}).  

\begin{figure}[ht] %
\centerline{
\scalebox{0.4}{\includegraphics[trim=0 -0cm 0cm 0cm,clip]{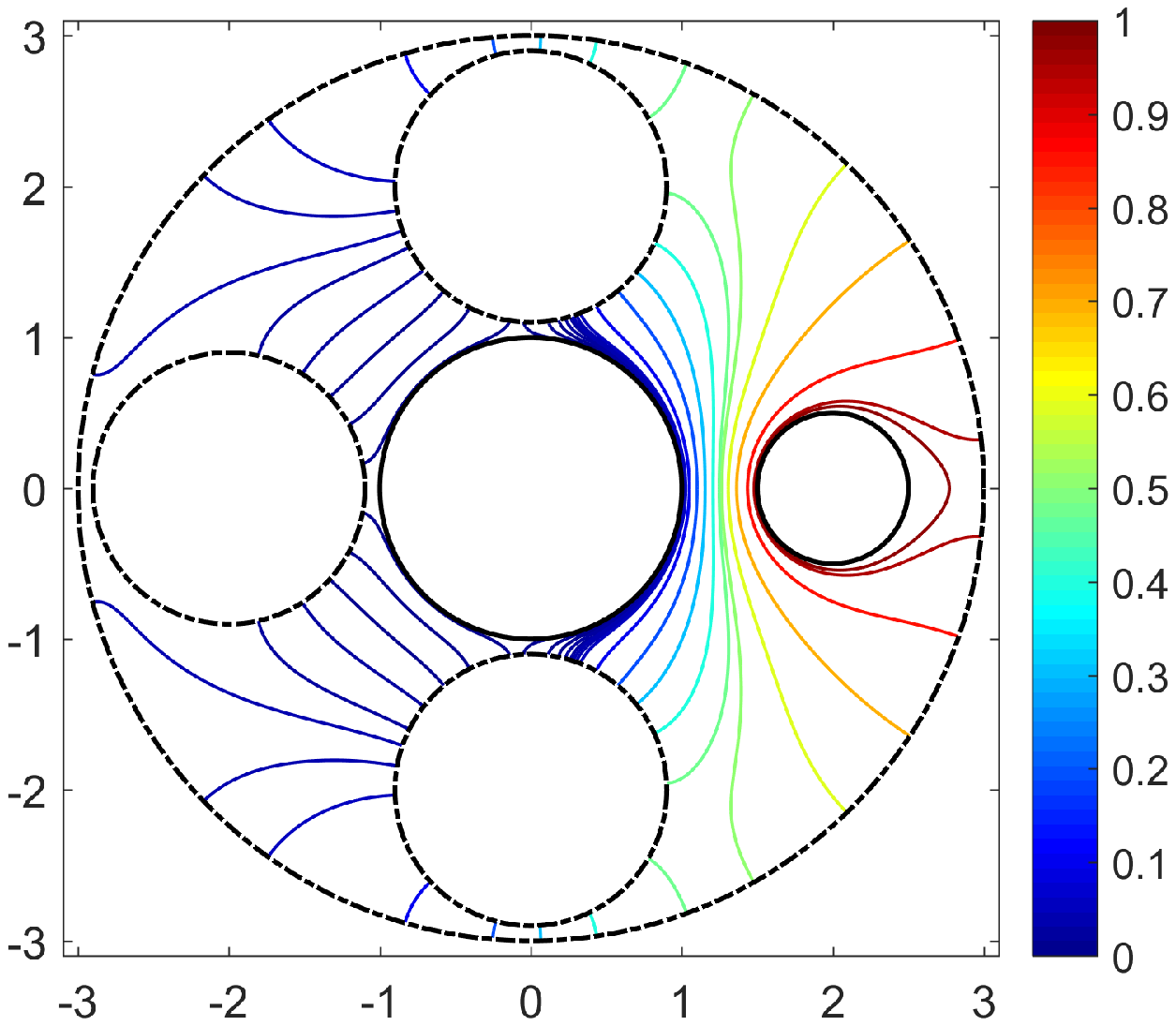}}
\hfill
\scalebox{0.4}{\includegraphics[trim=0 -0cm 0cm 0cm,clip]{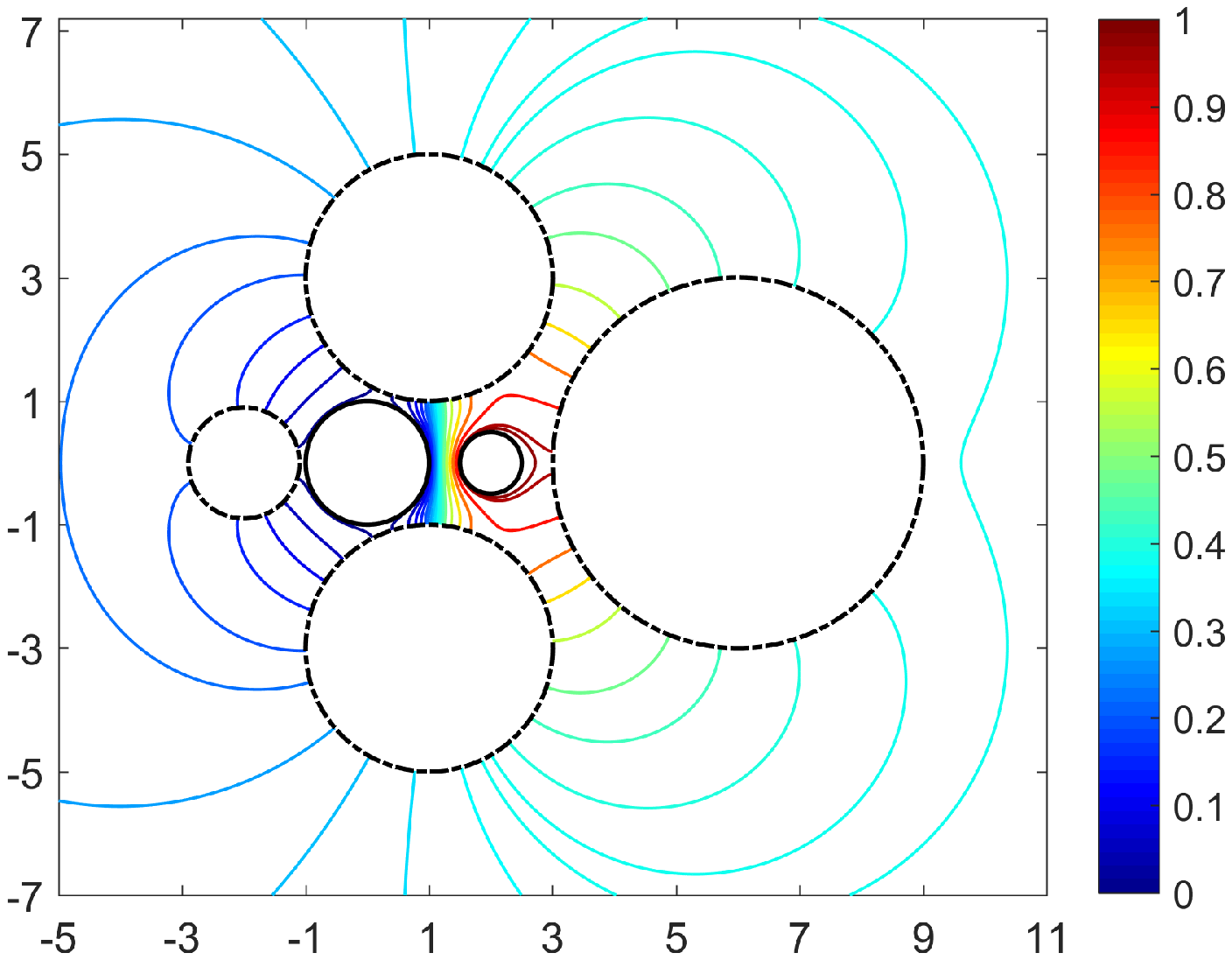}}
\hfill
\scalebox{0.4}{\includegraphics[trim=0 0 0 0cm,clip]{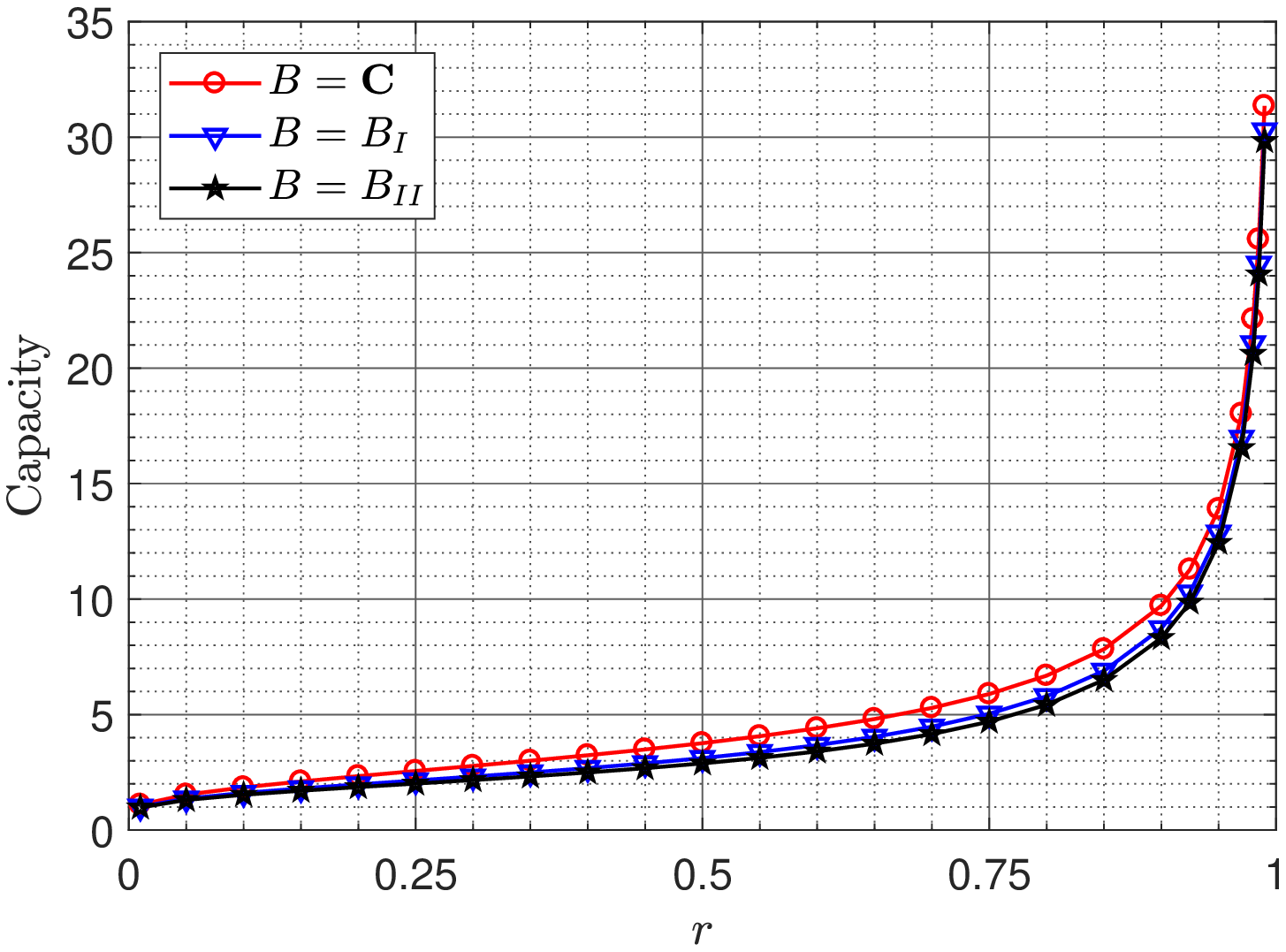}}
}
\caption{The field of the condenser and the level curves of the function $u$ for $B=B_{I}$ (left) and $B=B_{II}$ (center); and the approximate values of the capacity for $\delta_2=1$ (right).}
\label{fig:6cir}
\end{figure}

\begin{table}[ht]
\caption{The approximate values of the capacity $\capp(C)$ for Example~\ref{ex:6cir}.}
\label{tab:cir}%
\vskip-0.5cm
\[
\begin{array}{l@{\hspace{1.00cm}}l@{\hspace{1.00cm}}l@{\hspace{1.00cm}}l} \hline %
\delta_2   & B=B_{I}             & B=B_{II}            & B=\CC            \\ \hline %
0.15       & 0.070116283201223   & 0.064979770350752   & 0.084657798864524\\
0.30       & 0.280465132804894   & 0.259919081403007   & 0.338631195458096\\
0.45       & 0.631046548811011   & 0.584817933156765   & 0.761920189780715\\
0.60       & 1.121860531219576   & 1.039676325612028   & 1.354524781832383\\
0.15       & 1.752907080030588   & 1.624494258768793   & 2.116444971613098\\
0.90       & 2.524186195244046   & 2.339271732627063   & 3.047680759122861\\
\hline %
\end{array}
\]
\end{table}

\nonsec{\bf Five circles.}\label{ex:5cir}

In this example, we consider the generalized condenser $C=(B,E,\delta)$ with $B=\CC$, $E=\{E_1,\ldots,E_5\}$, and $\delta=\{1,2,3,4,0\}$. The plates of the condenser are given by $E_k=\overline{G_k}$, $k=1,\ldots,m$, where $G_{1,3}=\{z:|z\mp2|<1\}$, $G_{2,4}=\{z:|z\mp2\i|<r\}$, and $G_{5}=\{z:|z|>4\}$. So, $\ell'=\ell=0$, $m=5$, and $m'=4$. The field of the condenser, $G$, is then the bounded multiply connected domain in the exterior of the four circles $\Gamma_{1,3}=\{z\,:\,|z\mp2|=1\}$ and $\Gamma_{2,4}=\{z\,:\,|z\mp2\i|=1\}$; and in the interior of the circle $\Gamma_{5}=\{z\,:\,|z|=4\}$ (see Figure~\ref{fig:5cir}).

The approximate values of the capacity obtained with several values of $n$ are shown in Table~\ref{tab:5cir}. Figure~\ref{fig:5cir} shows the level curves of the function $u$ obtained with with $n=2^{10}$.

\begin{table}[ht]
\caption{The approximate values of the capacity $\capp(C)$ for Example~\ref{ex:5cir}.}
\label{tab:5cir}%
\vskip-0.5cm
\[
\begin{array}{l@{\hspace{1.00cm}}l} \hline %
n          & \capp(C)            \\ \hline %
2^5        & 140.5271930046695   \\
2^6        & 140.5271935663499   \\
2^7        & 140.5271935663502   \\
2^8        & 140.5271935663485   \\
2^9        & 140.5271935663483   \\
2^{10}     & 140.5271935663559   \\
\hline %
\end{array}
\]
\end{table}

\begin{figure}[ht] %
\centerline{
\scalebox{0.5}[0.5]{\includegraphics[trim=0 0cm 0cm 0cm,clip]{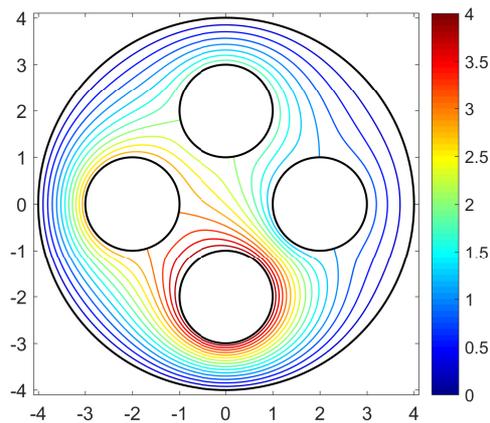}}
}
\caption{The field of the condenser and the level curves of the function $u$ for the condenser in Example~\ref{ex:5cir}.}
\label{fig:5cir}
\end{figure}

\nonsec{\bf Sierpinski carpet.}\label{ex:carpet}

The Sierpinski carpet is another generalization of the Cantor set to dimension two.
The construction of the Sierpinski carpet begins with a square $S_0$. The square $S_0$ is subdivided into $9$ congruent subsquares in a $3$-by-$3$ grid, and the central subsquare is removed to obtain $S_1$. Then, we subdivide each of the $8$ remaining solid squares into $9$ congruent squares and remove the center square from each to obtain $S_2$. The same procedure is then applied recursively to obtain $S_3$, $S_4$, $\ldots$\,, where
\[
S_0\supset S_1\supset S_2\supset S_3\supset S_4\supset \cdots ,
\] 
(see Figure~\ref{fig:carpet-lc1} for $S_2$ (left) and $S_3$ (right)).
Then the Sierpinski carpet is defined as 
\[
S = \bigcap_{k=0}^{\infty} S_k.
\]

For $k=0,1,2,\ldots$, the domain $\hat{S}_k=S_k\backslash\partial S_k$ is a multiply connected domain of connectivity $1+\sum_{j=0}^{k}8^j$. The domain $\hat{S}_k$ has $1+\sum_{j=0}^{k}8^j$ boundary components which all are squares. We will distinguish here two of these squares, namely, the external square which will be called $\Gamma_2$ and internal square which was removed from $S_0$ to obtain $S_1$ and it will be called $\Gamma_1$. The other $-1+\sum_{j=0}^{k}8^j$ squares are in the domain between $\Gamma_1$ and $\Gamma_2$.
Let $B$ be the multiply connected domain obtained by removing these $-1+\sum_{j=0}^{k}8^j$ squares and the domains interior to these squares from the extended complex place $\hat\CC$. Let also $E_{1}=\overline{G}_1$ where $G_1$ is the domain interior to $\Gamma_1$ and $E_{2}=\overline{G}_2$ where $G_2$ is the domain exterior to $\Gamma_2$.
In this example, we consider the generalized condensers $C_k=(B,E,\delta)$ with $E=\{E_1,E_2\}$ and $\delta=\{0,1\}$. Thus, $\ell'=\ell=-1+\sum_{j=0}^{k}8^j$, $m=2$, and $m'=1$. The field of the condenser, $G$, is then the bounded multiply connected domain $\hat S$ (see Figure~\ref{fig:carpet-lc1}).

The approximate value of the capacity for $k=0,1,2,3,4$ are shown in Table~\ref{tab:carpet1} and the level curves of the function $u$ for $k=2,3$ are shown in Figure~\ref{fig:carpet-lc1}. The presented method is used with $n=2^{10}$.
For this example, we have $m'=1$ and hence we need to solve only one integral equation to compute $\capp(C_k)$ for each $k$. The presented method can be used to compute the capacity even when the number of squares is too high. For example, to compute $\capp(C_k)$ for $k=5$, the multiplicity of the domain $G$ is $4682$ and hence, for $n=2^{10}$, the size of the linear system obtained by discretization the integral equation is $4794368$ by $4794368$. Although the size of the system is too high, the presented method requires only $400$ seconds to compute the capacity.

\begin{table}[ht]
\caption{The approximate values of the capacity $\capp(C_k)$ for Example~\ref{ex:carpet}.}
\label{tab:carpet1}%
\vskip-0.5cm
\[
\begin{array}{l@{\hspace{1.00cm}}l@{\hspace{1.00cm}}l@{\hspace{1.00cm}}l} \hline %
k    & m+\ell     & \capp(C_k)         & {\rm CPU\; time\; (sec)}   \\ \hline %
1    & 2          & 6.215546324111108  & 0.25   \\
2    & 10         & 5.088779139415422  & 0.64   \\
3    & 74         & 4.076130615454810  & 3.00   \\
4    & 586        & 3.258035364401146  & 29.69   \\
5    & 4682       & 2.600902059654094  & 399.97   \\
\hline %
\end{array}
\]
\end{table}

\begin{figure}[ht] %
\centerline{
\scalebox{0.55}{\includegraphics[trim=0 0cm 0cm 0cm,clip]{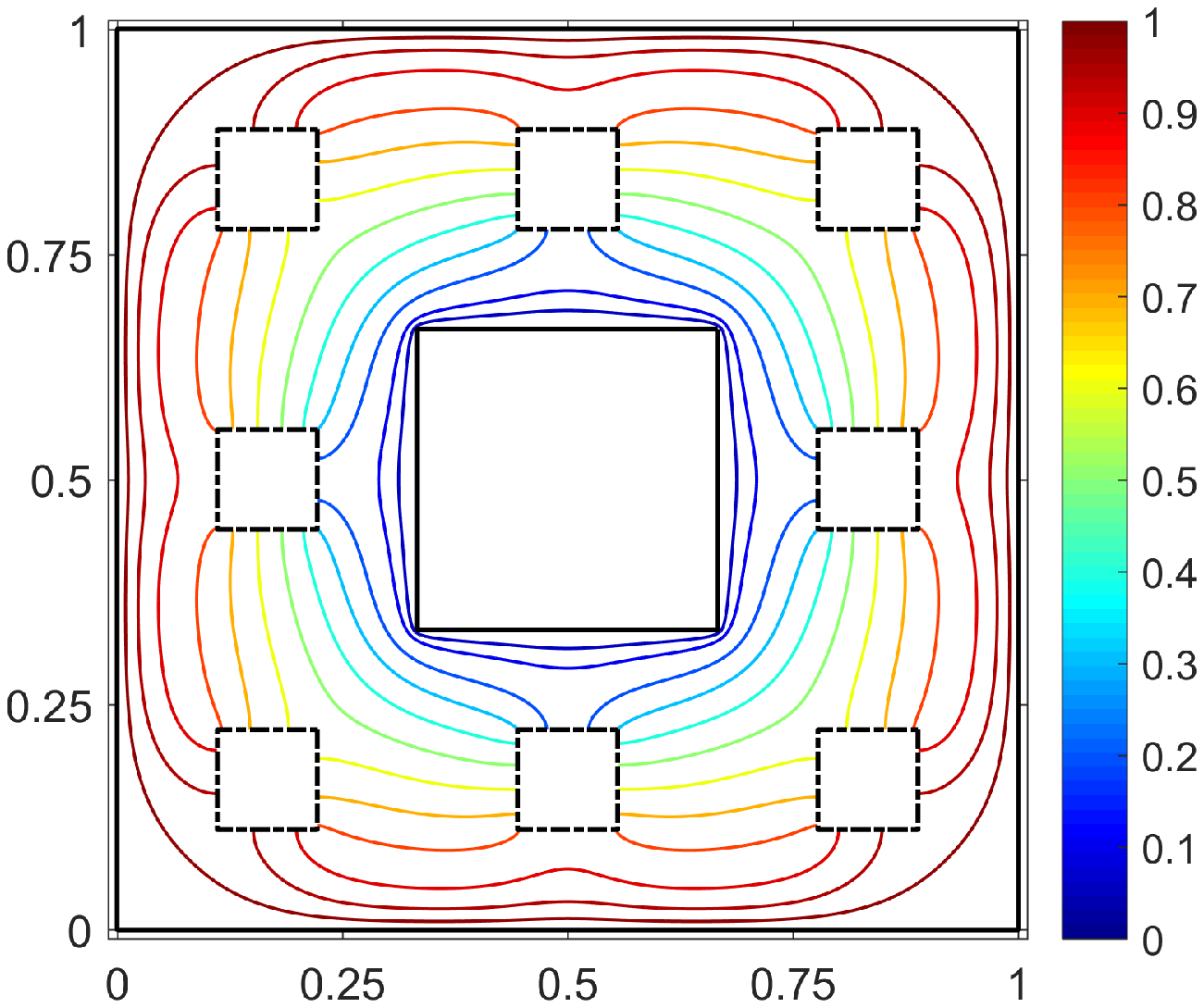}}
\hfill
\scalebox{0.55}{\includegraphics[trim=0 0cm 0cm 0cm,clip]{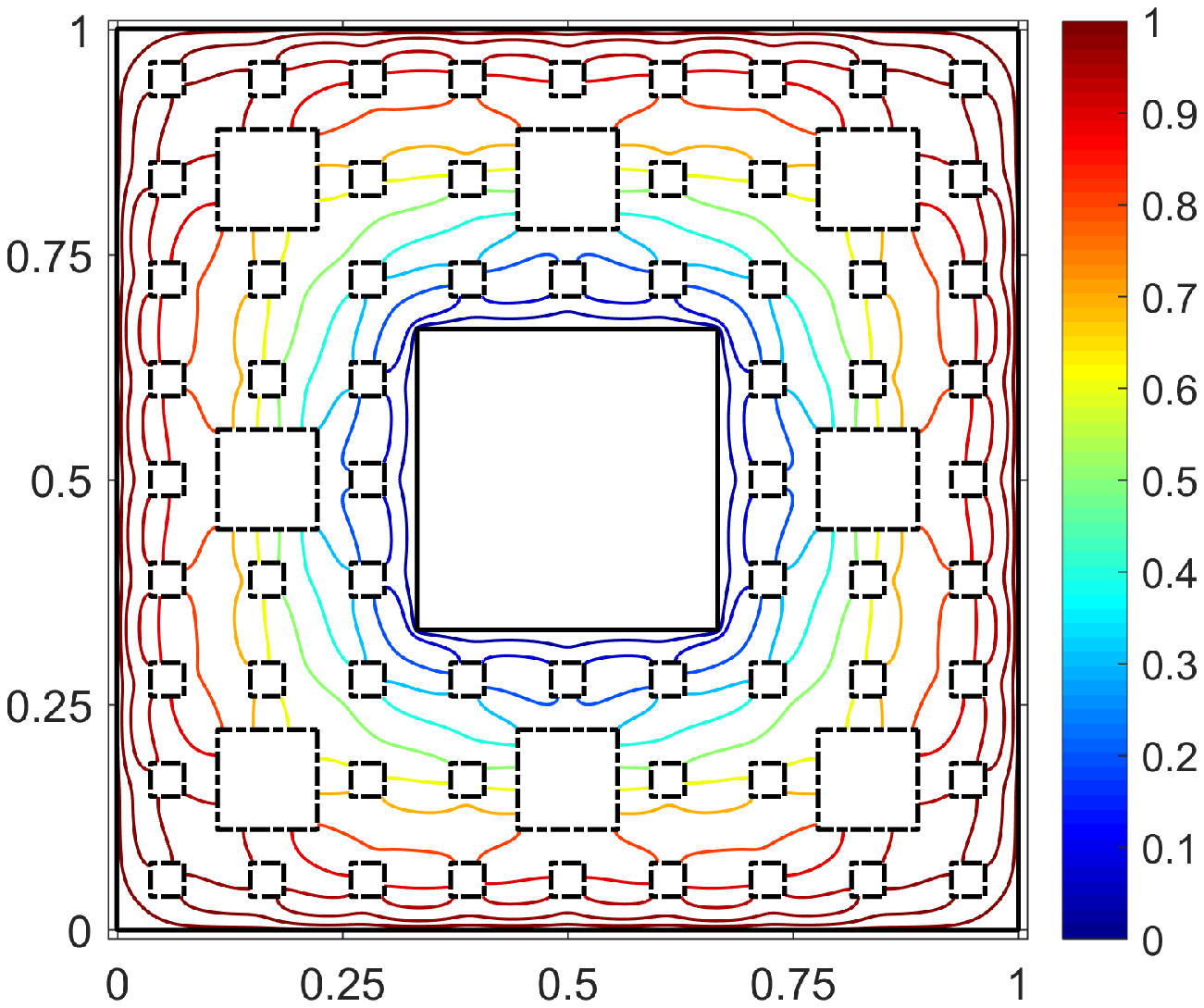}}
}
\caption{The level curves of the function $u$ for the condenser in Example~\ref{ex:carpet} for $k=2$ (left) and $k=3$ (right).}
\label{fig:carpet-lc1}
\end{figure}

\section{Condensers with slit plates}

The method presented above can be used to compute the capacity of only condensers bordered by smooth or piecewise smooth boundaries. Since the Dirichlet integral is conformally invariant, the capacities for the cases for which the plates of the condenser are rectilinear slits can be computed with the help of conformal mappings as in the following examples.

\nonsec{\bf Three slits: regular condenser.}\label{ex:3slits}

In this example, we consider the generalized condenser $C=(B,E,\delta)$ with $B=\CC$, $E=\{E_1,E_2,E_3\}$ where $E_1=[-c,-1]$, $E_2=[a,b]$, and $E_3=[1,c]$, $-1<a<b<1<c$. For  the levels of the potential of the plates, we consider two cases: $\delta=\{1,1,0\}$  and $\delta=\{0,1,0\}$. So, $\ell=0$, $m=3$, and the generalized condenser reduces to a regular condenser. This example has been considered in~\cite[Example~6]{bsv} for several values of $a$ and $b$.

Here, the field of the generalized condenser, $G$, is the unbounded triply connected domain in the exterior of the three slits  $E_1$, $E_2$, and $E_3$ (see Figure~\ref{fig:3slits}). Hence, the domain $G$ for this generalized condenser is not bordered by Jordan curves. So, the method presented above is not directly applicable to such a domain $G$. Thus, to compute the capacity of this condenser, we first map this domain onto a domain $\hat G$ bordered by smooth Jordan curves so that our method can be used. An iterative numerical method for computing such a domain $\hat G$ has been presented recently in~\cite{NG18}. Using this iterative method, a conformally equivalent domain $\hat G$ bordered by ellipses can be obtained as in Figure~\ref{fig:3slits} (right). For details on the iterative method for computing the domain $G$, we refer the reader to~\cite{NG18}.

\begin{figure}[ht] %
\centerline{
\scalebox{0.5}[0.5]{\includegraphics[trim=0 0cm 0cm 0cm,clip]{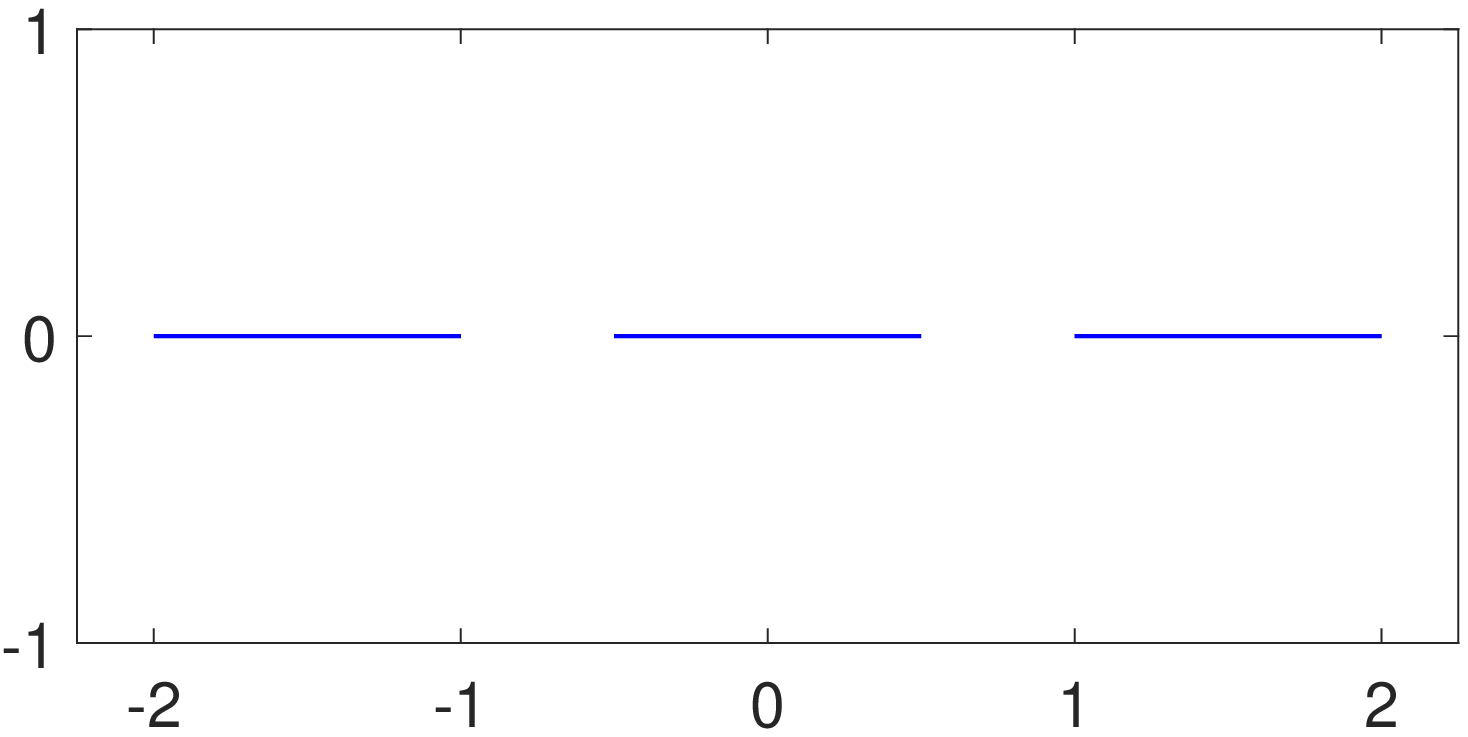}}
\hfill
\scalebox{0.5}[0.5]{\includegraphics[trim=0 0cm 0cm 0cm,clip]{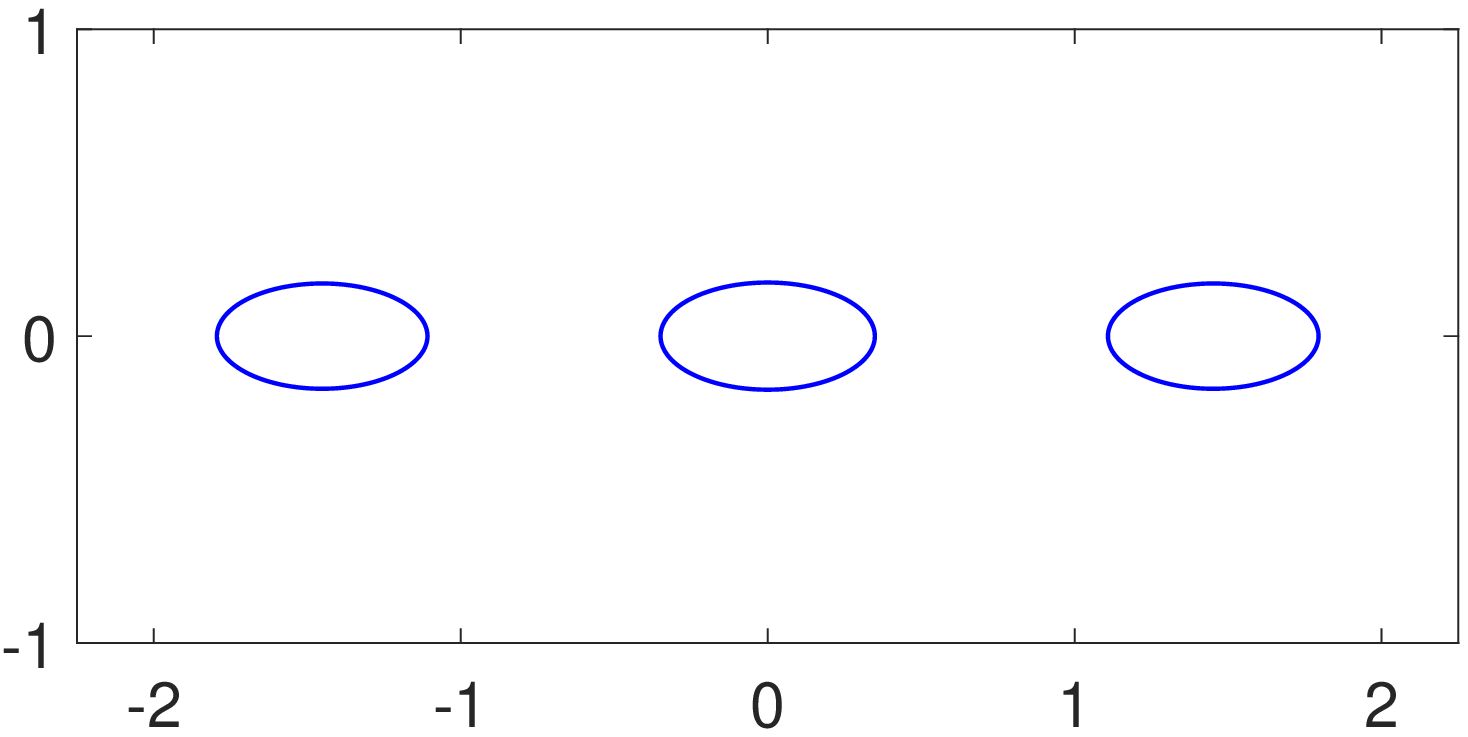}}
}
\caption{The domains $G$ (left) and $\hat G$ (right) for the condenser in Example~\ref{ex:3slits}.}
\label{fig:3slits}
\end{figure}

Since the Dirichlet integral is conformally invariant, the capacity for the new domain $\hat G$ is the same as the capacity for the original domain $G$. For the new domain $\hat G$, we use the presented method with $n=2^{11}$ for several values of the constants $a$, $b$, and $c$ (for the same values used in~\cite{bsv}). The level curves of the function $u$ for $a=-0.5$, $b=0.5$, and $c=2$ are shown in Figure~\ref{fig:3slits-lc}. The obtained approximate values of the capacity as well as the results presented in~\cite{bsv} are shown in Table~\ref{tab:3slits}.

\begin{figure}[ht] %
\centerline{
\scalebox{0.55}{\includegraphics[trim=0 2cm 0cm 2cm,clip]{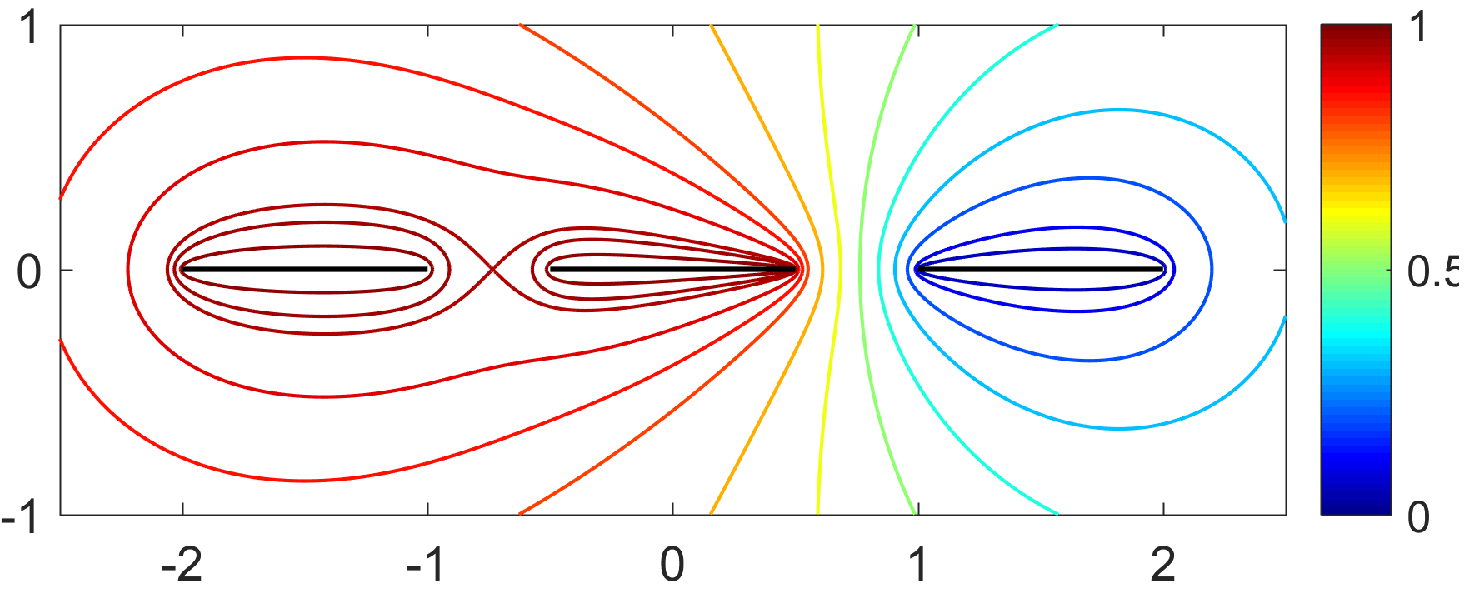}}
\hfill
\scalebox{0.55}{\includegraphics[trim=0 2cm 0cm 2cm,clip]{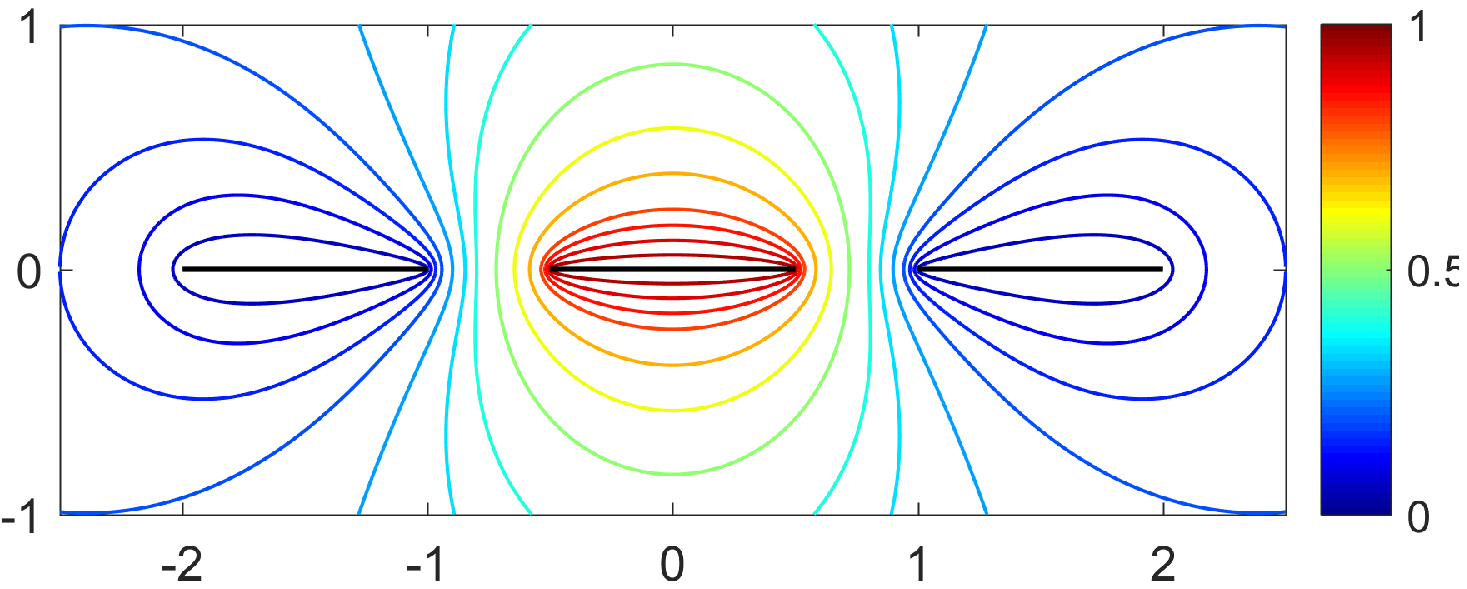}}
}
\caption{The level curves of the function $u$ for the condenser in Example~\ref{ex:3slits} for Case I (left) and Case II (right).}
\label{fig:3slits-lc}
\end{figure}

\begin{table}[ht]
\caption{The approximate values of the capacity $\capp(C)$ for Example~\ref{ex:3slits}.}
\label{tab:3slits}%
\vskip-0.5cm
\[
\begin{array}{r@{\hspace{0.5cm}}l@{\hspace{0.5cm}}l|l@{\hspace{0.5cm}}l||l@{\hspace{0.5cm}}l} \hline %
       &       &     & \multicolumn{2}{c||}{\mbox{Case\;I}} & \multicolumn{2}{c}{\mbox{Case\;II}} \\\cline{4-7}
a      & b     & c   & \mbox{Our\;Method} & \mbox{\cite{bsv}} & \mbox{Our\;Method} & \mbox{\cite{bsv}}       \\ \hline %
-0.9   & 0     & 2   & 1.708669509849820 & 1.7086693 & 3.453772340126319  & 3.4537720\\
-0.5   & 0.5   & 2   & 2.095326566730911 & 2.0953263 & 2.941023714396430  & 2.9410234\\
-0.9   & 0.9   & 2   & 3.067636432954407 & 3.0676361 & 5.187751867577839  & 5.1877511\\
 0     & 0.9   & 2   & 3.033274793073555 & 3.0332745 & 3.453772340126327  & 3.4537719\\
-0.5   & 0.5   & 3   & 2.412575260903909 & 2.4125750 & 3.048687933334055  & 3.0486876\\
-0.7   & 0.2   & 3   & 2.131839309436634 & 2.1318391 & 3.017210220380872  & 3.0172100\\
 0.5   & 0.8   & 3   & 2.807123923176794 & 2.8071236 & 2.312108724455613  & 2.3121085\\
\hline %
\end{array}
\]
\end{table}

\nonsec{\bf Three slits: generalized condenser.}\label{ex:3slits-g}

In this example, we consider the generalized condenser $C=(B,E,\delta)$ with $B=\CC\backslash[a,b]$, $E=\{E_1,E_2\}$ where $E_1=[-c,-1]$, $E_2=[1,c]$, and $\delta=\{0,1\}$, $-1<a<b<1<c$. So, here we have $\ell=1$, $m=2$. The domain $G$ of condenser here is the same as in Example~\ref{ex:3slits} and the Dirichlet boundary condition on the middle slit is replaced with Neumann condition. Thus, as in Example~\ref{ex:3slits}, we compute first a conformally equivalent domain $\hat G$ bordered by smooth Jordan curves. Then, for the domain $\hat G$, we use the presented method with $n=2^{11}$ for the same values of the constants $a$, $b$, and $c$ used in Example~\ref{ex:3slits}. The obtained results are presented in Figure~\ref{fig:3slits-lc-g} (left) and in Table~\ref{tab:3slits-g}. 

We see from Table~\ref{tab:3slits-g} that the middle segment $[a,b]$ on the real axis has no effect on the value of the capacity for this case. 
However, this will not be the case if we move the middle segment away from the real axis. 
To show that, we keep $E$ and $\delta$ the same as above and we change the domain $B$ to $B=\CC\backslash[a+\i,b+\i]$, i.e., we move the middle segment vertically by unity. Then, the values of the capacity depends on $a$ and $b$ (see the fifth column in Table~\ref{tab:3slits-g}). The level curves of the potential function are presented in Figure~\ref{fig:3slits-lc-g} (right).

\begin{figure}[ht] %
\centerline{
\scalebox{0.55}{\includegraphics[trim=0 1cm 0cm 1cm,clip]{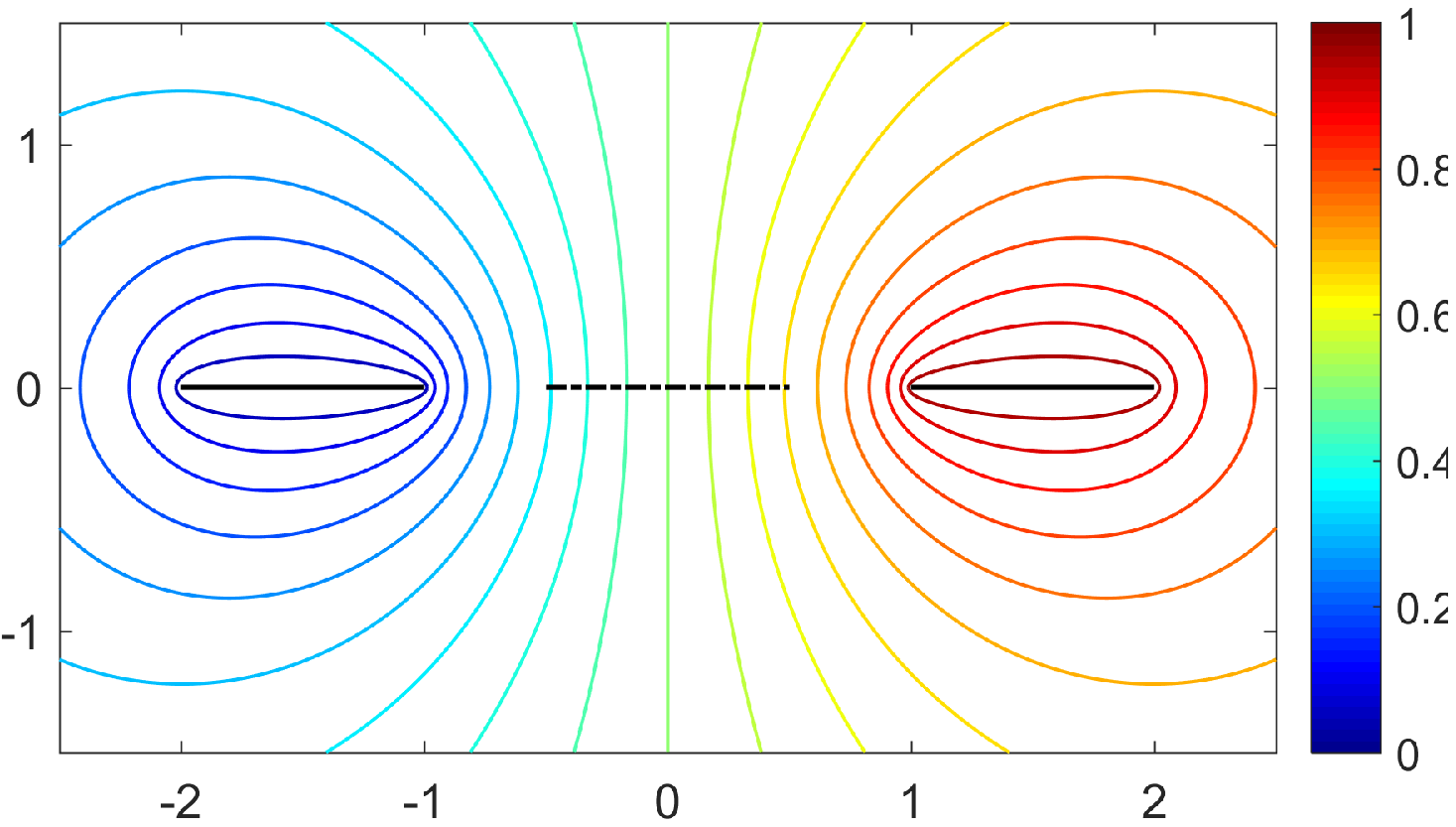}}
\hfill
\scalebox{0.55}{\includegraphics[trim=0 1cm 0cm 1cm,clip]{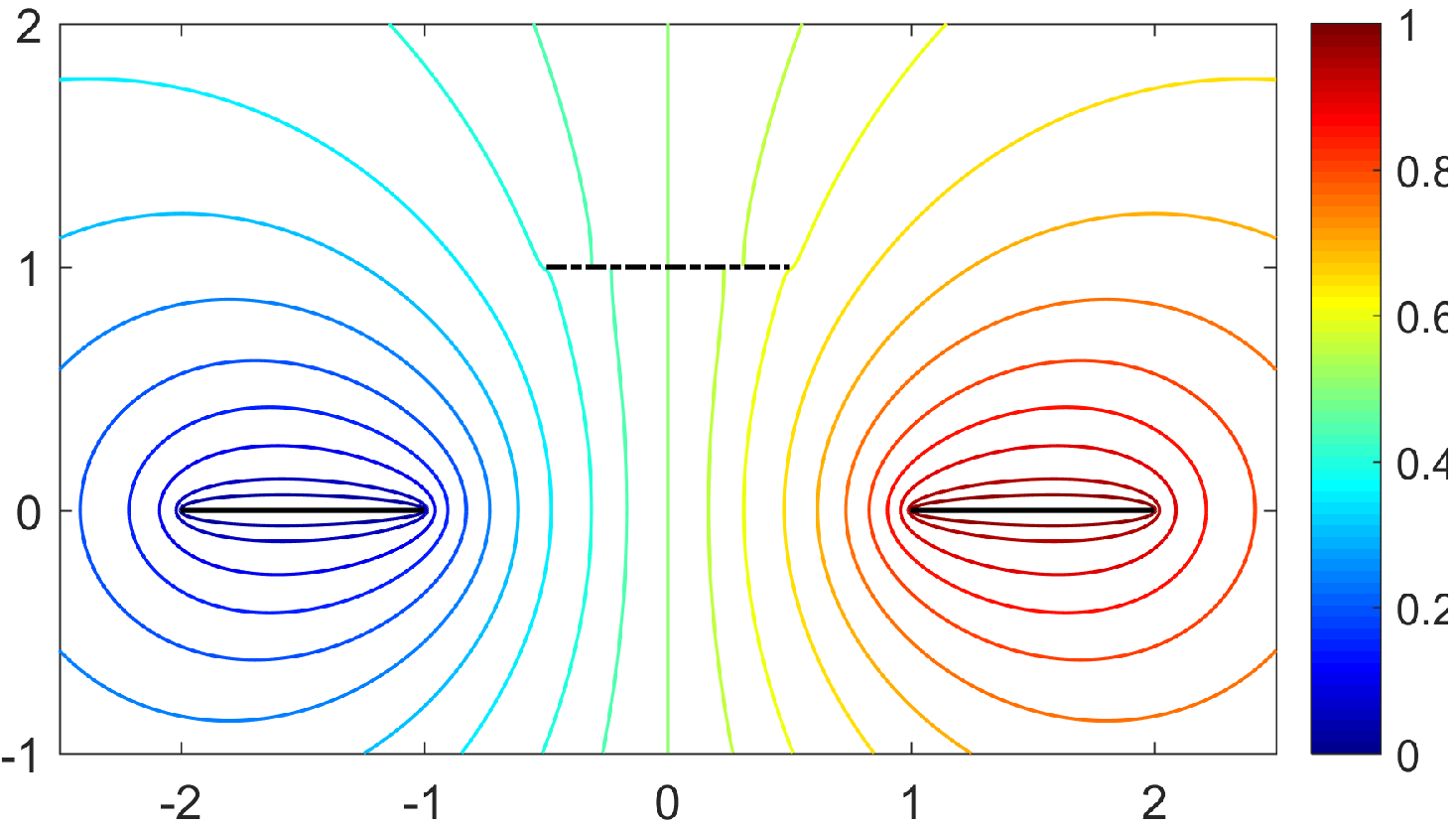}}
}
\caption{The level curves of the function $u$ for the condenser in Example~\ref{ex:3slits-g} for $B=\CC\backslash[a,b]$ (left) and $B=\CC\backslash[a+\i,b+\i]$ (right).}
\label{fig:3slits-lc-g}
\end{figure}

\begin{table}[ht]
\caption{The approximate values of the capacity $\capp(C)$ for Example~\ref{ex:3slits-g}.}
\label{tab:3slits-g}%
\vskip-0.5cm
\[
\begin{array}{r@{\hspace{0.5cm}}l@{\hspace{0.5cm}}l@{\hspace{0.5cm}}l@{\hspace{0.5cm}}l} \hline %
a      & b     & c   & B=\CC\backslash[a,b] & B=\CC\backslash[a+\i,b+\i] \\ \hline %
-0.9   & 0     & 2   & 1.279261571170975    & 1.276631670192704 \\
-0.5   & 0.5   & 2   & 1.279261571170975    & 1.278826082326995 \\
-0.9   & 0.9   & 2   & 1.279261571170975    & 1.274579061435374\\
 0     & 0.9   & 2   & 1.279261571170975    & 1.276631670192704\\
-0.5   & 0.5   & 3   & 1.563401922696102    & 1.563011913331686\\
-0.7   & 0.2   & 3   & 1.563401922696101    & 1.562502672069208\\
 0.5   & 0.8   & 3   & 1.563401922696093    & 1.562906600728627\\
\hline %
\end{array}
\]
\end{table}

\nonsec{\bf Cantor set.}\label{ex:cant-s}

In Example~\ref{ex:cant-d}, we consider the Cantor dust which a generalization of the classical Cantor middle third set to dimension two. The boundaries of the closed sets $S_k$ in Example~\ref{ex:cant-d} were piecewise smooth Jordan curves so the method presented in Section~\ref{sc:num} is directly applicable to the problem considered in Example~\ref{ex:cant-d}. In this example, we consider the classical Cantor middle third set which means the domain $G$ is bordered by slits and hence the presented method is not directly applicable. However, the presented method can be used with the help of conformal mappings as explained in Example~\ref{ex:3slits}.

Let $I_k$, $k=0,1,2,\ldots$, be as defined in Example~\ref{ex:cant-d}. Then, the classical Cantor middle third set is defined as 
\[
I = \bigcap_{k=1}^{\infty} I_k.
\]
For $k=0,1,2,\ldots$, the closed set $I_k$ consists of $2^k$ closed intervals $E_1,E_2,\ldots,E_{2^k}$ (see Figure~\ref{fig:cant-s-lc} for $k=2$ (left) and $k=3$ (right)).
We consider the generalized condensers $C_k=(B,E,\delta)$ with $B=\CC$ and $E=\{E_1,E_2,\ldots,E_{2^k}\}$. For the levels of the potential function $\delta=\{\delta_j\}_{j=1}^{4^k}$, we assume $\delta_j=0$ for half of the plates (the plates on the left of the line $x=0.5$) and $\delta_j=1$ for the other half (the plates on the right of the line $x=0.5$). 
Thus, $\ell=0$, $m'=m=2^k$, and the generalized condenser reduces to a regular condenser. The field of the condenser, $G$, is then the unbounded multiply connected domain in the exterior of the closed sets $E_k$ (see Figure~\ref{fig:cant-s-lc}).

The approximate values of the capacity for $k=1,2,\ldots,9$ are shown in Table~\ref{tab:cant-s} and the level curves of the function $u$ for $k=2,3$ are shown in Figure~\ref{fig:cant-s-lc}. For each $k$, we need first to use the iterative method presented in~\cite{NG18} to compute a domain $\hat G$ bordered by smooth Jordan curves which is conformally equivalent to the domain $G$. Then, we use the presented method for the new domain $\hat G$ and the method requires solving $m=2^k$ integral equations. The total CPU time for the two steps for each $k$ is presented in Table~\ref{tab:cant-s}. The presented numerical results obtained with $n=2^{10}$.

\begin{table}[ht]
\caption{The approximate values of the capacity $\capp(C_k)$ for Example~\ref{ex:cant-s}.}
\label{tab:cant-s}%
\vskip-0.5cm
\[
\begin{array}{l@{\hspace{1.00cm}}l@{\hspace{1.00cm}}l@{\hspace{1.00cm}}l} \hline %
k    & m=2^k    & \capp(C_k)          & {\rm Time\;(sec)} \\ \hline %
1    & 2        & 1.563401922696121   & 0.22     \\
2    & 4        & 1.521894262663735   & 0.70     \\
3    & 8        & 1.498986233451143   & 2.17     \\
4    & 16       & 1.487078427246902   & 6.84     \\
5    & 32       & 1.480987379910617   & 23.20    \\
6    & 64       & 1.477880583227881   & 91.26    \\
7    & 128      & 1.476295519723391   & 371.64   \\
8    & 256      & 1.475486275937497   & 1405.52  \\
9    & 512      & 1.475072901005890   & 6158.92  \\
\hline %
\end{array}
\]
\end{table}

\begin{figure}[ht] %
\centerline{
\scalebox{0.55}{\includegraphics[trim=0 3cm 0cm 3cm,clip]{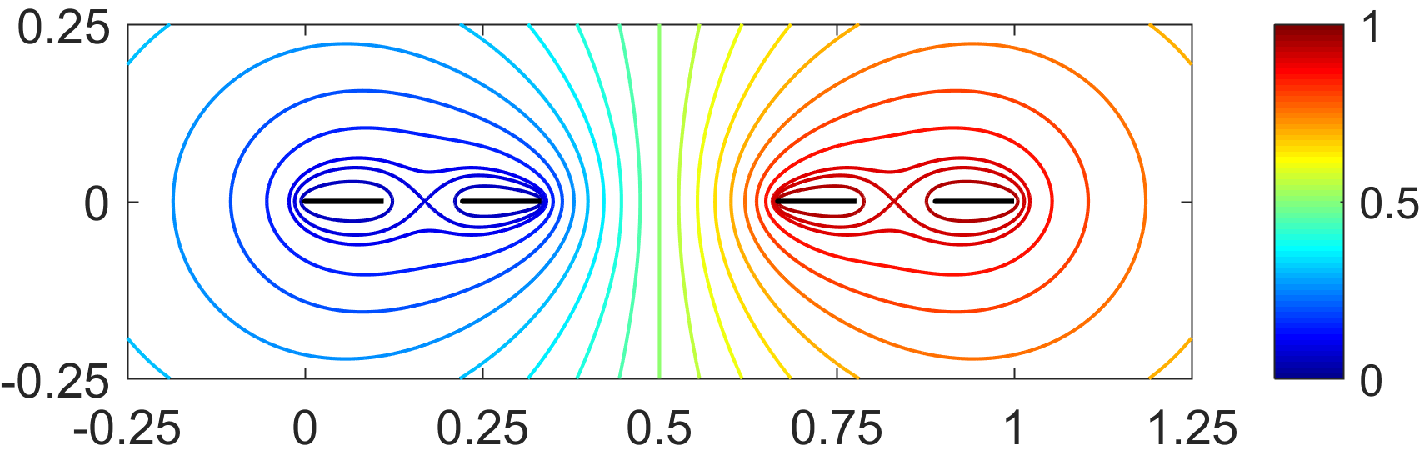}}
\hfill
\scalebox{0.55}{\includegraphics[trim=0 3cm 0cm 3cm,clip]{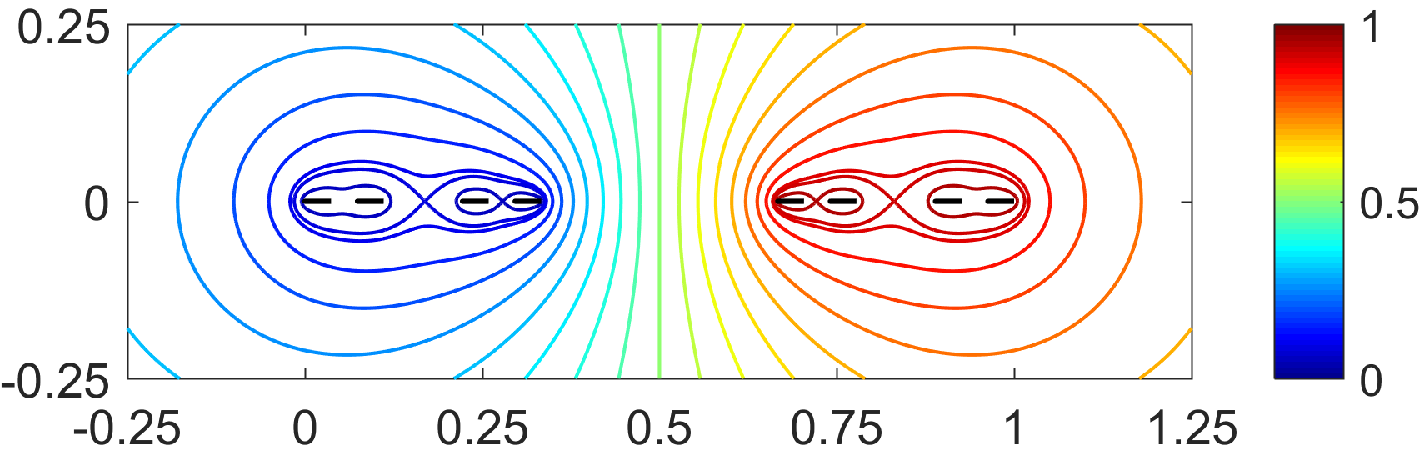}}
}
\caption{The level curves of the function $u$ for the condenser in Example~\ref{ex:cant-s} for $k=2$ (left) and $k=3$ (right).}
\label{fig:cant-s-lc}
\end{figure}

\section{Harmonic measure}

Assume that the multiply connected domain $G$ is as described in Section~\ref{sec:u} with $\ell=0$, i.e., $G$ is a multiply connected domain of connectivity $m$ bordered by $\Gamma=\cup_{k=1}^{m}\Gamma_k$ where $\Gamma_m$ is the external boundary component if $G$ is bounded. In this section, we shall use the method described above to compute the ``harmonic measure'' for the multiply connected domain $G$. 

For a fixed $j$, $j=1,2,\ldots,m$, let $u$ be the harmonic function in $G$ that satisfy the boundary condition 
\begin{equation}\label{eq:har-u-bd}
u(\zeta)=
\begin{cases} 
1, & \zeta \in \Gamma_j, \\ 
0, & \zeta \in \Gamma_k, \quad k\ne j, \quad k=1,2,\ldots,m,
\end{cases}
\end{equation}
where $u$ is assumed to be bounded at $\infty$ for unbounded $G$.
Then the function $u$ is called the harmonic measure of $\Gamma_j$ with respect to $G$ and will be denoted by $\omega_{G,\Gamma_j}$~\cite{Cro-Green,gm,Kra,Tsu75}. From the Maximum Principle for harmonic functions~\cite[p.~77]{Tsu75} it follows that $0<\omega_{G,\Gamma_j}(z)<1$ for $z\in G$. The harmonic measure $\omega_{G,\Gamma_j}(z)$ is invariant under conformal maps. If $\Phi$ is a conformal mapping from the domain $G$ onto $\Phi(G)$, then~\cite{avv,gm}
\[
\omega_{G,\Gamma_j}(z)=\omega_{\Phi(G),\Phi(\Gamma_j)}(\Phi(z))
\]
for all $z\in G$, $j=1,2,\ldots,m$.

The boundary condition~\eqref{eq:har-u-bd} is a special case of the boundary condition~\eqref{eq:mix-bd} (here, $\ell=0$ so we will not have the normal derivative boundary condition). Thus, the algorithm presented in Section~\ref{sc:algorithm} can be used to compute the harmonic measure $\omega_{G,\Gamma_j}(z)$ for $z\in G$, $j=1,2,\ldots,m$. In fact, by the definition of the function $\delta$ in Example~\ref{ex:3slits}, the level curves presented in Figure~\ref{fig:3slits-lc} (right) are the level curves of the harmonic measure $\omega_{G,\Gamma_2}(z)$ for the triply connected domain $G$ in the exterior of the three slits $\Gamma_1$ (the left slit),  $\Gamma_2$ (the middle slit), and $\Gamma_3$ (the right slit). The level curves presented in Figure~\ref{fig:3slits-lc} (left) are the level curves of the harmonic measure $\omega_{G,\Gamma_1}(z)+\omega_{G,\Gamma_2}(z)$.
 
We consider two more examples as following.

\nonsec{\bf Annulus.}\label{ex:hm-ann}

Let $G$ by the annulus $G= \{z \in \mathbb{C}: q<|z|<1\}$. Then the exact harmonic measures of the inner circle $\Gamma_1= \{z \in \mathbb{C}: |z|=q\}$ and the outer circle $\Gamma_2= \{z \in \mathbb{C}: |z|=1\}$ with respect to $G$ are given by
\[
\omega_{G,\Gamma_1}(z) =   \frac{\log|z|}{\log q}, \quad
\omega_{G,\Gamma_2}(z) = 1-\frac{\log|z|}{\log q}, \quad z\in G.
\]

We use the method presented in Section~\ref{sc:num} with $n=2^{10}$ to compute approximate values of the harmonic measures $\omega_{G,\Gamma_1}(z)$ and $\omega_{G,\Gamma_2}(z)$ for $z\in G$. The absolute error in the computed values are shown in Figure~\ref{fig:hm-ann}.

\begin{figure}[ht] %
\centerline{
\scalebox{0.5}[0.5]{\includegraphics[trim=0 0cm 0cm 0cm,clip]{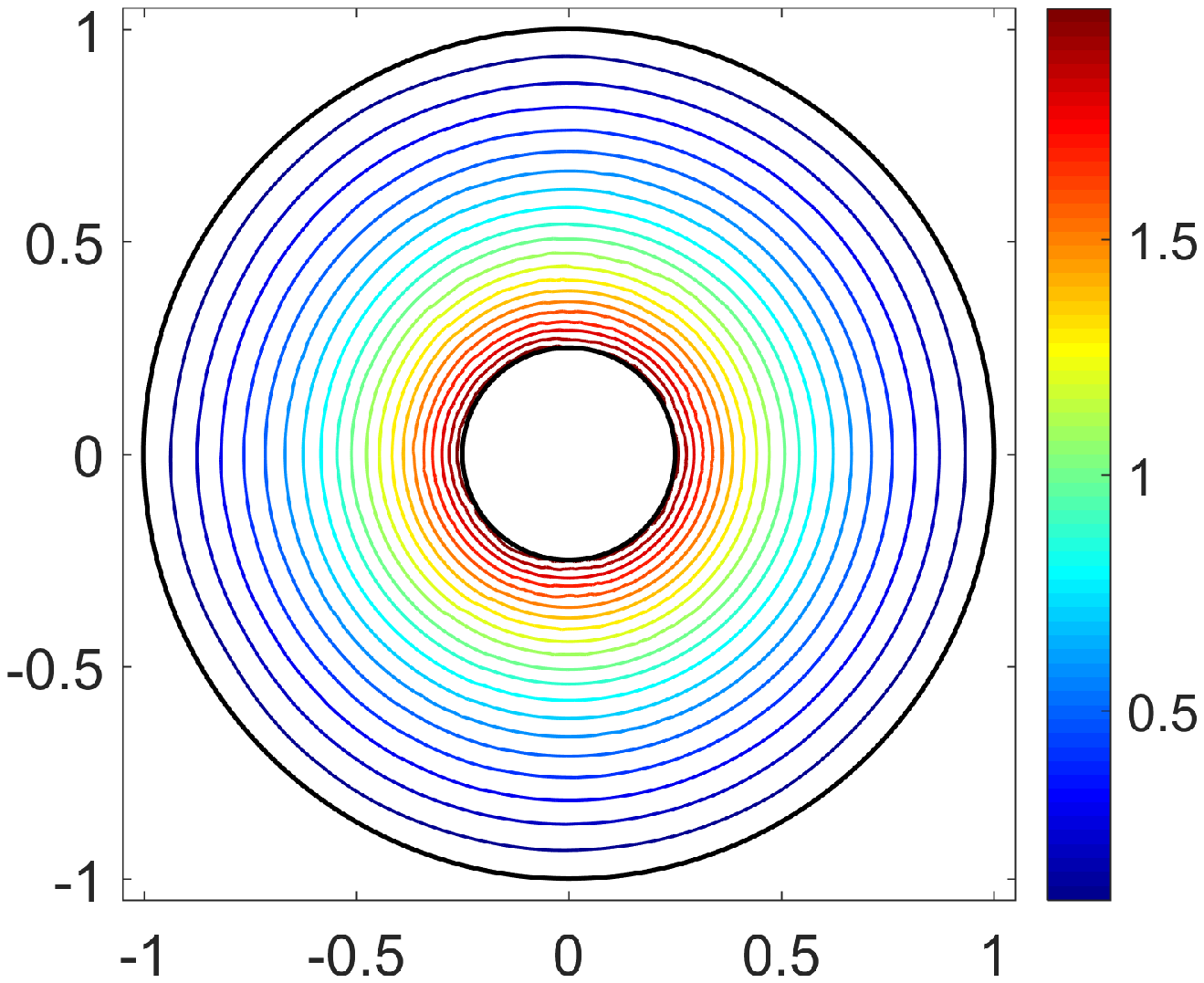}}
\hfill
\scalebox{0.5}[0.5]{\includegraphics[trim=0 0cm 0cm 0cm,clip]{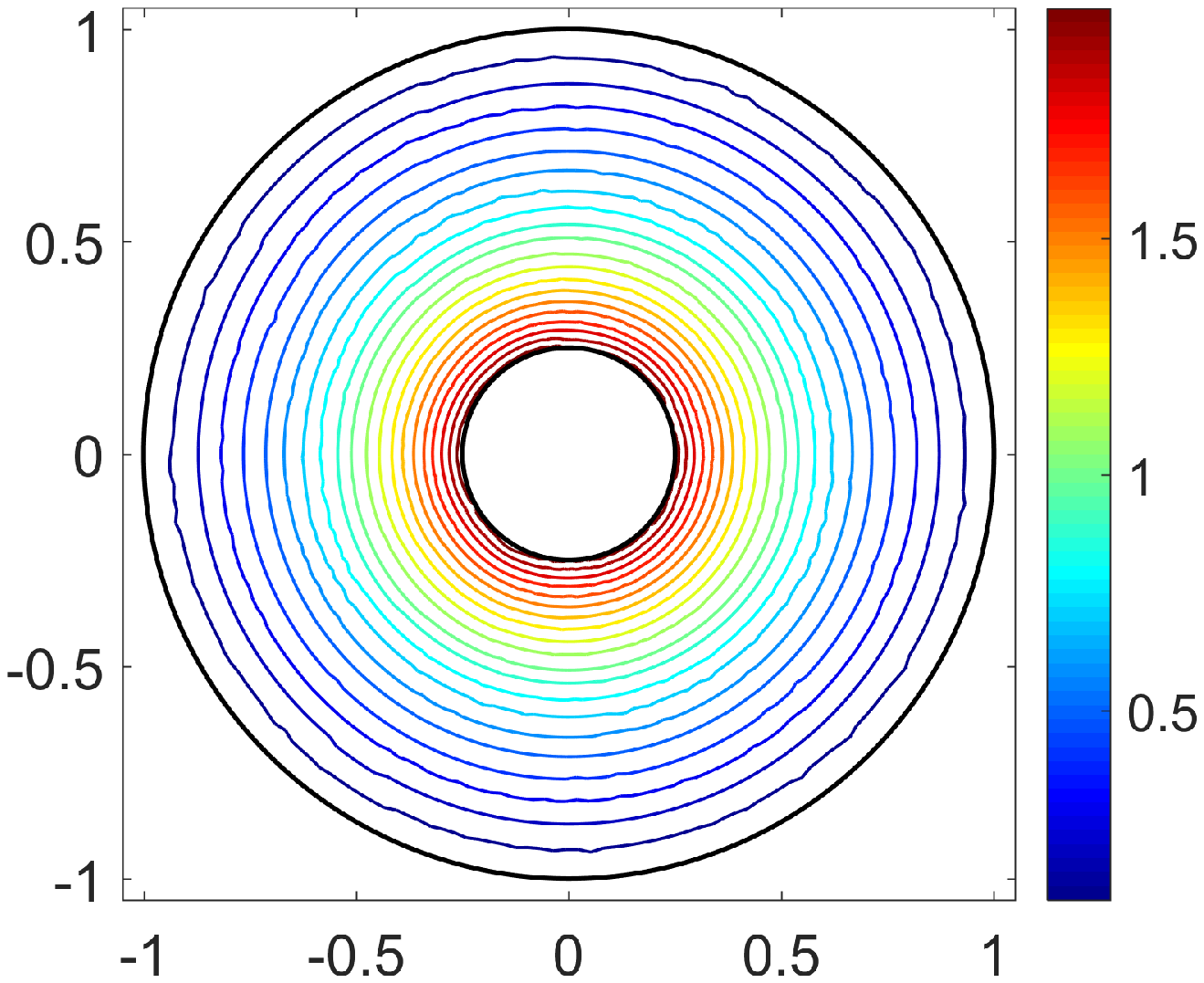}}
}
\caption{The level curves of the absolute error in the computed values of the harmonic measures $\omega_{G,\Gamma_1}(z)$ (left) and $\omega_{G,\Gamma_2}(z)$ (right) for Example~\ref{ex:hm-ann}.}
\label{fig:hm-ann}
\end{figure}

\nonsec{\bf Two disks and two polygons.}\label{ex:hm-plg}

We consider the multiply connected domain $G$ of connectivity $4$ in the exterior of the curves $\Gamma_1,\Gamma_2,\Gamma_3$ and in the interior of the curve $\Gamma_4$. Here, $\Gamma_1$ is the circle $|z-0.5|=0.25$, $\Gamma_2$ is the circle $|z+0.5|=0.25$, $\Gamma_3$ is the polygon with the vertices $0.5-0.5\i,0.5-0.8\i,-0.5-0.8\i,-0.5-0.5\i$, and $\Gamma_4$ is the polygon with the vertices $1,\i,-1,-1-\i,1-\i$. 
We use the method presented in Section~\ref{sc:num} with $n=5\times2^{8}$ to compute approximate values of the harmonic measures $\omega_{G,\Gamma_1}(z)$, $\omega_{G,\Gamma_2}(z)$, $\omega_{G,\Gamma_3}(z)$ and $\omega_{G,\Gamma_4}(z)$ for $z\in G$. The level curves of the computed harmonic measures are shown in Figure~\ref{fig:hm-plg}.

\begin{figure}[ht] %
\centerline{
\scalebox{0.5}[0.5]{\includegraphics[trim=0 0cm 0cm 0cm,clip]{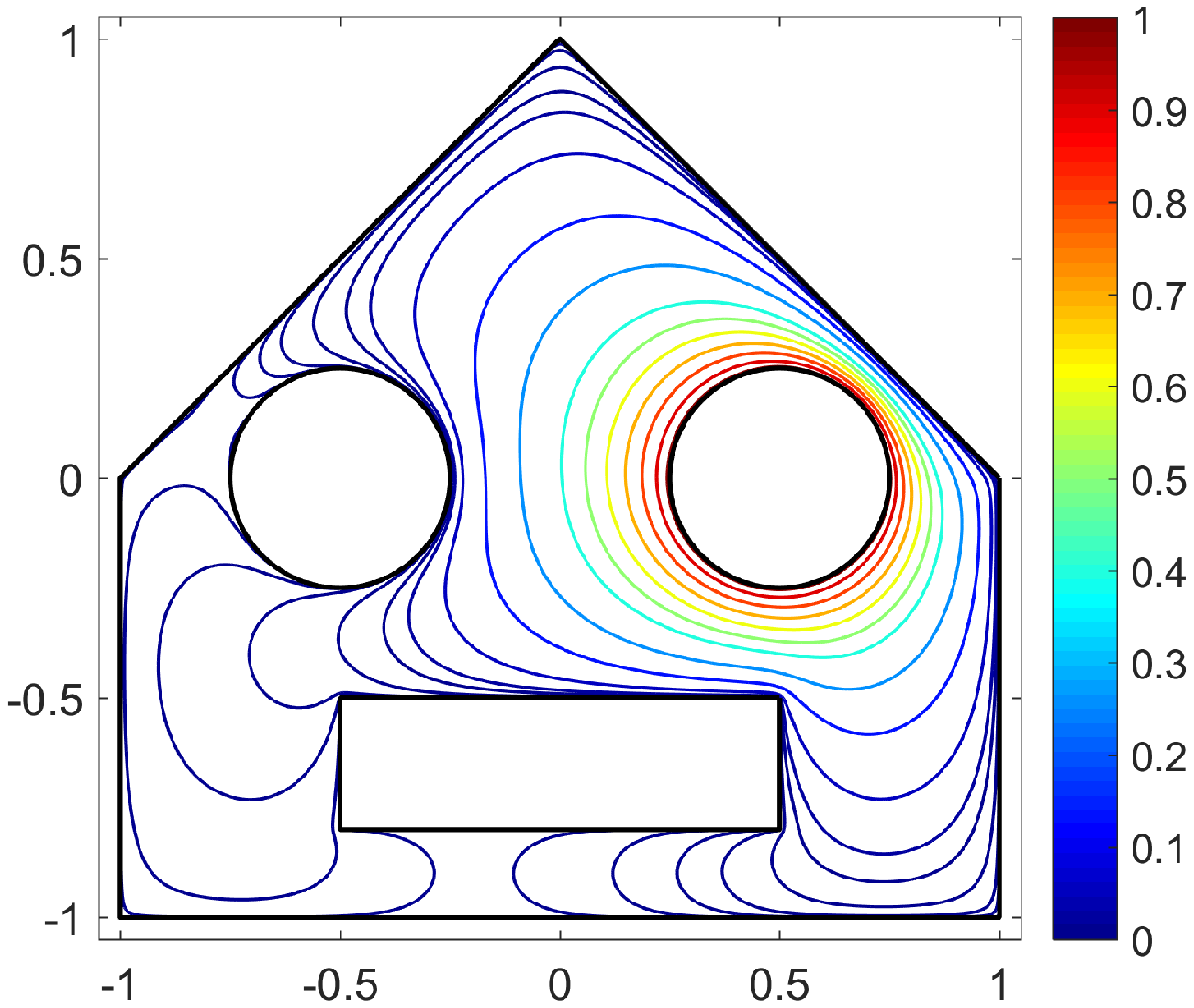}}
\hfill
\scalebox{0.5}[0.5]{\includegraphics[trim=0 0cm 0cm 0cm,clip]{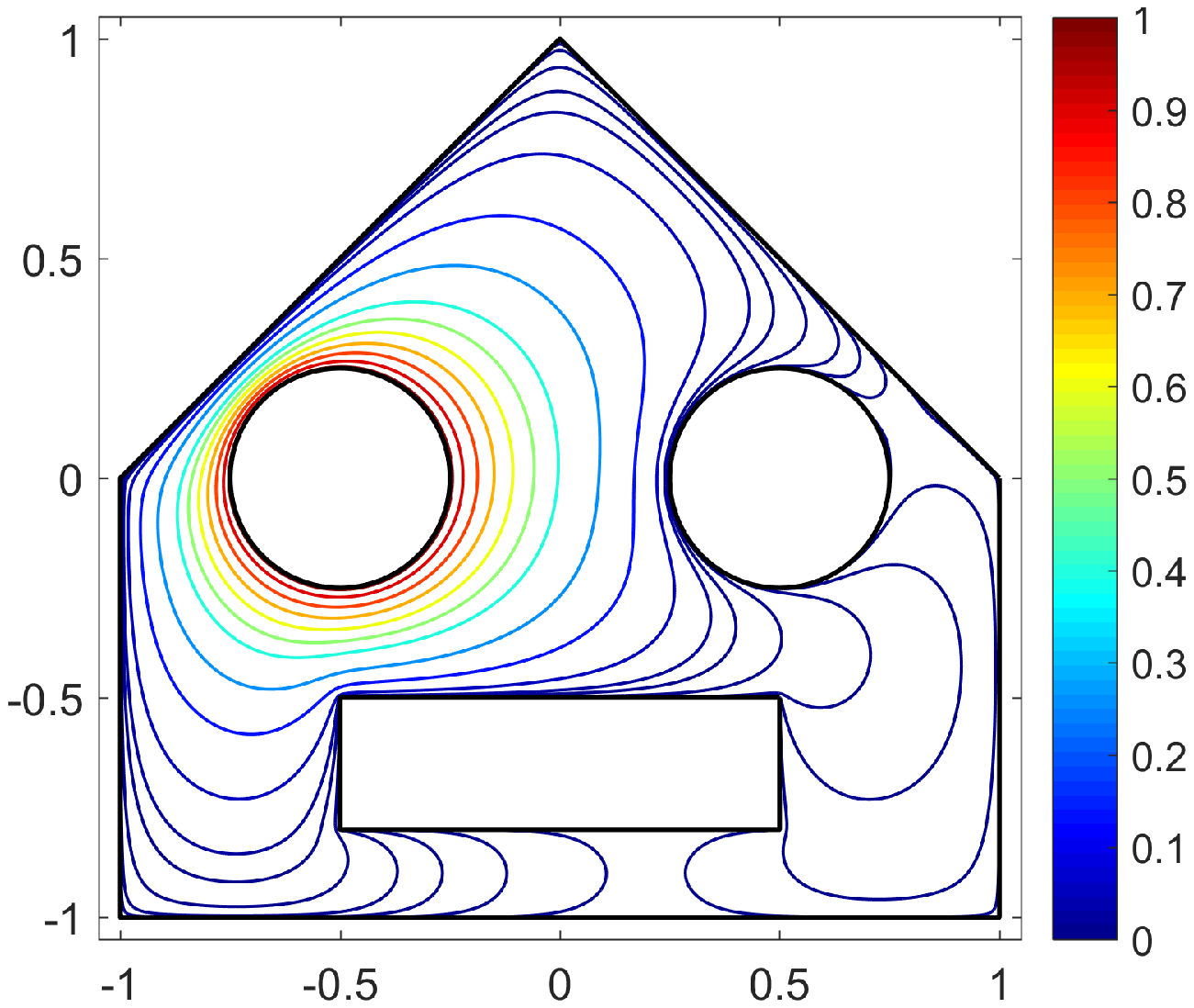}}
}
\centerline{
\scalebox{0.5}[0.5]{\includegraphics[trim=0 0cm 0cm 0cm,clip]{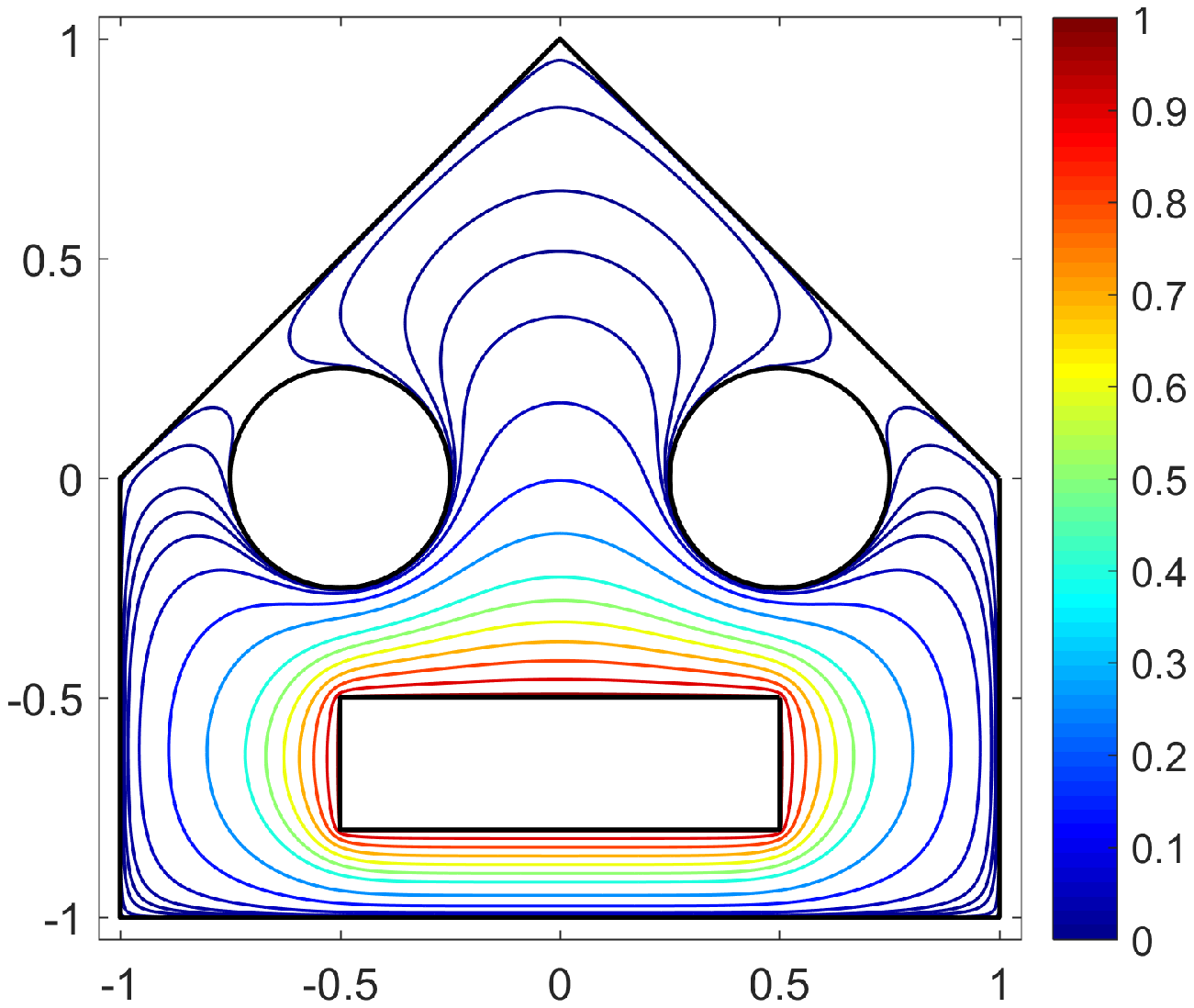}}
\hfill
\scalebox{0.5}[0.5]{\includegraphics[trim=0 0cm 0cm 0cm,clip]{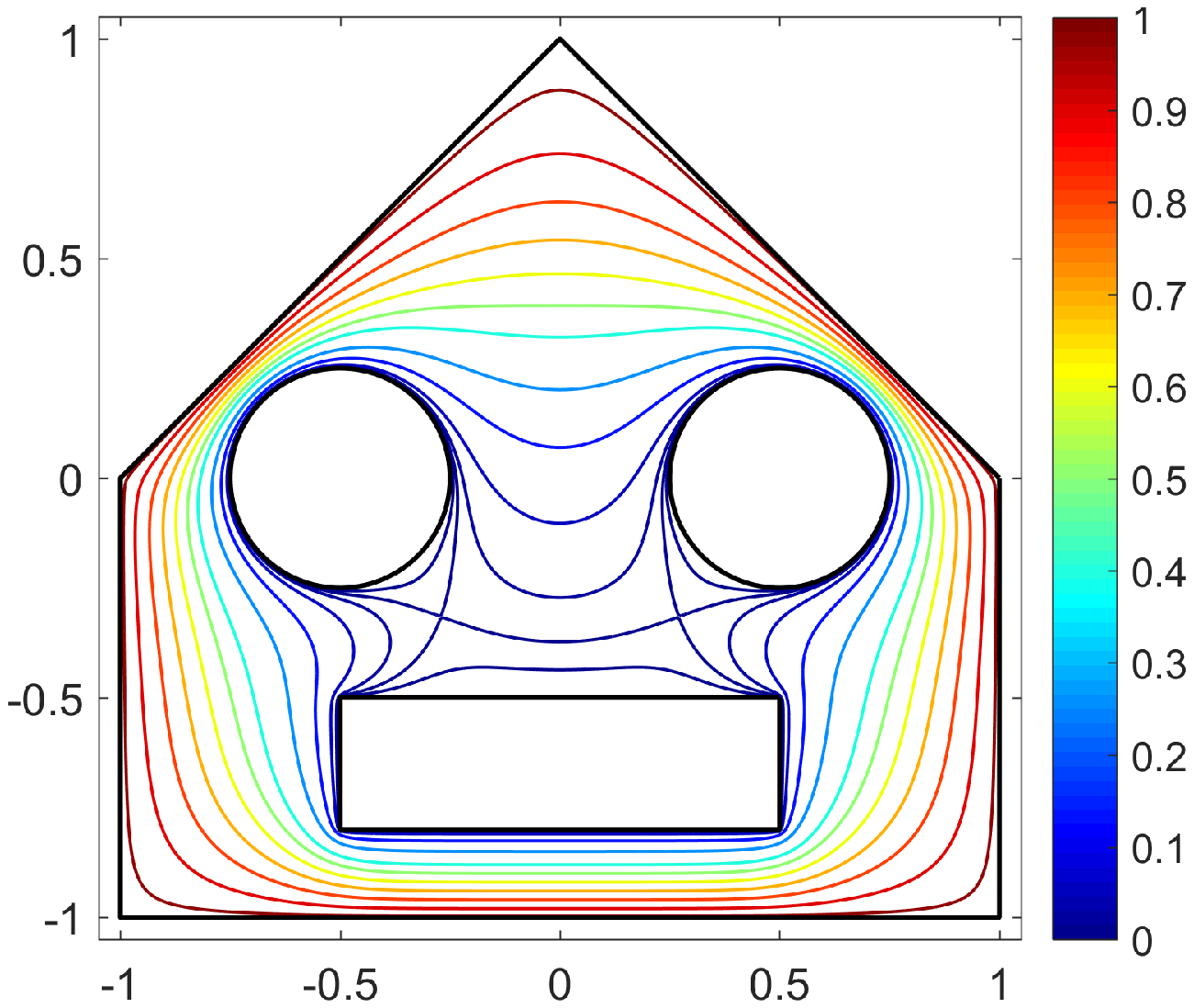}}
}
\caption{The level curves of the computed harmonic measures $\omega_{G,\Gamma_1}(z)$ (top, left), $\omega_{G,\Gamma_2}(z)$ (top, right), $\omega_{G,\Gamma_3}(z)$ (bottom, left), and $\omega_{G,\Gamma_4}(z)$ (bottom, right) for Example~\ref{ex:hm-plg}.}
\label{fig:hm-plg}
\end{figure}

\bigskip

%




\end{document}